\numberwithin{equation}{section}
\theoremstyle{plain}
\newtheorem{thm}[equation]{Theorem}
\newtheorem{lem}[equation]{Lemma}
\newtheorem{cor}[equation]{Corollary}
\newtheorem{prop}[equation]{Proposition}
\theoremstyle{definition}
\newtheorem{defn}[equation]{Definition}
\newtheorem{que}[equation]{Question}
\theoremstyle{remark}
\newtheorem{rmk}[equation]{Remark}
\newtheorem{eg}[equation]{Example}
\numberwithin{equation}{section}
\DeclareMathOperator{\Div}{Div}
\DeclareMathOperator{\nm}{N}
\DeclareMathOperator{\NS}{NS}
\DeclareMathOperator{\ord}{ord}
\DeclareMathOperator{\Pic}{Pic}
\DeclareMathOperator{\Rank}{Rank}
\DeclareMathOperator{\Spec}{Spec}
\DeclareMathOperator{\tr}{tr}
\renewcommand{\AA}{\mathbb{A}}
\newcommand{\QQ}{\mathbb{Q}}
\newcommand{\ZZ}{\mathbb{Z}}
\newcommand{\I}{\mathcal{I}}
\newcommand{\PP}{\mathbb{P}}
\newcommand{\FF}{\mathbb{F}}
\newcommand{\CC}{\mathbb{C}}
\newcommand{\E}{\mathcal{E}}
\author{Christopher Davis}
\address{University of Copenhagen, Dept of Mathematical Sciences,  
2100 K{\o}benhavn {\O}, Denmark}
\email{davis@math.ku.dk}
\date{\today}
\author{Tommy Occhipinti}
\address{Carleton College, Dept of Mathematics, Northfield, MN 55057}
\email{tocchipinti@carleton.edu }
\title{Explicit points on $y^2 + xy - t^d y = x^3$ and related character sums}
\begin{document}

\maketitle

\begin{abstract}
Let $\FF_q$ denote a finite field of characteristic~$p \geq 5$ and let $d = q+1$.  Let $E_d$ denote the elliptic curve over the function field $\FF_{q^2}(t)$ defined by the equation $y^2 + xy - t^d y = x^3$.  Its rank is $q$ when $q \equiv 1 \bmod 3$ and its rank is $q-2$ when $q \equiv 2 \bmod 3$.  We describe an explicit method for producing points on this elliptic curve.  In case $q \not\equiv 11 \bmod 12$, our method produces points which generate a full-rank subgroup.  Our strategy for producing rational points on $E_d$ makes use of a dominant map from the degree~$d$ Fermat surface over $\FF_{q^2}$ to the elliptic surface associated to $E_d$.  We in turn study lines on the Fermat surface $\mathcal{F}_d$ using certain multiplicative character sums which are interesting in their own right.  In particular, in the $q \equiv 7 \bmod 12$ case, a character sum argument shows that we can generate a full-rank subgroup using $\mu_d$-translates of a single rational point.
\end{abstract}

\section{Introduction}

Let $\FF_q$ denote a finite field of characteristic~$p \geq 5$, let $d = q+1$, and let $\chi$ denote a non-trivial multiplicative character on $\FF_{q^2}$ of order dividing $d$.  Put $\chi(0) = 0$.  In this paper we analyze some character sums of the form 
\begin{equation} \label{general character sum}
\sum_{x \in \FF_{q^2}} \chi(f(x))
\end{equation}
where $f(x) \in \FF_q[x]$ is a separable cubic polynomial.  From our analysis, we derive consequences concerning rational points on certain elliptic curves over the function field $\FF_{q^2}(t)$.  We begin by considering the more elementary case that the polynomial $f(x)$ in (\ref{general character sum}) is a quadratic polynomial.  

Let $f(x) \in \FF_q[x]$ denote any separable quadratic polynomial.  Note that because we force the coefficients to lie in $\FF_q$, the polynomial splits in $\FF_{q^2}[x]$, and (\ref{general character sum}) reduces to a Jacobi sum.  Our assumption that the order of $\chi$ divides $d= q+1$ further simplifies the situation, and well-known results (see for example \cite[Theorems~5.16 and 5.21]{LN97}) then imply
\[
\sum_{x \in \FF_{q^2}} \chi(f(x)) = \begin{cases} q &\mbox{if $\chi$ has order $> 2$} \\-1 &\mbox{if $\chi$ has order 2.} \end{cases}
\]

The condition that $\chi$ has order dividing $d$ implies in particular that the character sum (\ref{general character sum}) is always real.  When $f(x)$ is quadratic, Weil's Theorem for multiplicative character sums \cite[Theorem~5.41]{LN97} implies that (\ref{general character sum}) is bounded in absolute value by $q$.  We can interpret the above result as saying that when $\chi$ has order strictly greater than $2$ and $f(x)$ is a separable quadratic polynomial with coefficients in $\FF_q$, then (\ref{general character sum}) always attains its upper bound.  

In this paper, considerations from arithmetic geometry lead us to consider the following analogue.  Here the polynomial $f(x)$ is cubic and the Weil bound in this case becomes $2q$.  

\begin{que}
Let $f(x) \in \FF_q[x]$ denote a separable cubic polynomial and assume furthermore that $f(x)$ splits completely in $\FF_q[x]$.  Let $\chi$ denote a multiplicative character of order dividing $d := q+1$.  Across all such polynomials $f(x)$, how often does the character sum (\ref{general character sum}) attain its upper bound $2q$?
\end{que}

It is not difficult to attain the following bound; see Section~\ref{props of char sum} for details.

\begin{thm}  \label{bound on N} Assume $\chi$ has order greater than $2$ and dividing $d$.  Let $N$ denote the number of values of $c \in \FF_q$ such that the sum (\ref{general character sum}) attains the upper bound $2q$ for $f(x) = x(x+1)(x+c)$.   Then
\[
N \leq \frac{3q-9}{4}.
\]
\end{thm}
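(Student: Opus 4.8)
The plan is to translate the condition "the character sum attains $2q$" into a divisibility-type condition on the cubic $f(x) = x(x+1)(x+c)$, then count how many $c$ can satisfy it. The first step is to understand when a multiplicative character sum $\sum_{x \in \FF_{q^2}} \chi(f(x))$ of a cubic over $\FF_{q^2}$ can reach the Weil bound $2q$. Since $\chi$ has order $e$ dividing $d = q+1$, we have $\chi(y^{q+1}) = 1$ for all $y \in \FF_{q^2}^\times$, so $\chi$ factors through the norm map $\nm\colon \FF_{q^2}^\times \to \FF_q^\times$; write $\chi = \psi \circ \nm$ for a character $\psi$ on $\FF_q^\times$. The sum $\sum_{x\in\FF_{q^2}} \chi(f(x))$ is governed by the curve $y^e = f(x)$ over $\FF_{q^2}$, and attaining the full Weil bound $2q$ forces the relevant Jacobi-sum eigenvalues all to equal $q$ (up to sign — but the sum being real and the order of $\chi$ being $>2$ pins it to $+q$), i.e. the curve $C\colon y^e = f(x)$ is "supersingular with all Frobenius eigenvalues $= q$" in the appropriate quotient, which over $\FF_{q^2}$ amounts to saying $C$ is maximal (or minimal) — the number of points meets the Hasse–Weil bound.

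**Next I would** make this concrete by reducing to the genus-one case $e = 3$ (or $e=6$): the sum only depends on $\chi^j$ for $j$ coprime to considerations, and the extreme sums reduce to cubic twists. Specifically, one checks that $\sum_{x} \chi(x(x+1)(x+c)) = 2q$ is equivalent to the elliptic curve $y^2 = x(x+1)(x+c)$ (the Legendre-type curve, or rather its relevant cubic twist $y^3 = x(x+1)(x+c)$ when $3 \mid d$) being supersingular over $\FF_{q^2}$ with $a_{q^2} = -2q$, i.e. maximal. Maximality of the Legendre curve $E_\lambda\colon y^2 = x(x-1)(x-\lambda)$ over $\FF_{q^2}$ with $\lambda \in \FF_q$ is classical: $E_\lambda$ is supersingular over $\overline{\FF_p}$ iff $\lambda$ is a root of the Deuring/Hasse polynomial $H_p(\lambda) = \sum_{i=0}^{(p-1)/2}\binom{(p-1)/2}{i}^2 \lambda^i$, a polynomial of degree $(p-1)/2$. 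The count $N$ is then bounded by the number of $\lambda \in \FF_q$ lying in a suitable locus, and the Hasse polynomial has exactly $(p-1)/2$ roots, of which $(p-1)/2$ lie in $\FF_{p^2}$; but here $q$ may be a large power of $p$, so that naive count is too weak, and one needs the geometric input that supersingular $\lambda$'s are constrained.

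**The cleaner route, which I would actually pursue, is** to use the threefold symmetry of the Legendre family: the substitutions permuting $\{0, 1, \infty\}$ (the $S_3$-action $\lambda \mapsto \lambda, 1-\lambda, 1/\lambda, \dots$) act on the set of supersingular $\lambda$'s, and the parametrization $f(x) = x(x+1)(x+c)$ has its own symmetries in $c$. If $f$ achieves the bound for parameter $c$, so does it for the $S_3$-orbit of the corresponding $\lambda$ — generically orbits of size $6$, but there are exceptional $\lambda \in \{-1, 1/2, 2\}$ (orbit size $3$, the "$j = 1728$" point) and $\lambda$ a primitive sixth root of unity (orbit size $2$, $j = 0$). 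Counting: among $c \in \FF_q$ giving a separable cubic (so $c \neq 0, 1$), the relevant $c$-to-$\lambda$ correspondence together with these orbit sizes yields, after removing the forced non-maximal or degenerate cases, the bound $N \le (3q-9)/4$. The constant $3/4$ and the $-9$ should fall out of: three choices of which root pairs up (factor $3$), a factor $1/2$ from a two-to-one map $c \mapsto \lambda$ or from the sign ambiguity $a_{q^2} = \pm 2q$ (only one sign allowed), and the $-9$ from excluding the handful of special orbits ($j = 0, 1728$ and the excluded $c = 0, 1$).

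**The main obstacle will be** pinning down exactly why the sign must be $+2q$ rather than $-2q$ and why the genus and the cubic-vs-sextic cover issue ($3 \mid d$ always since $d = q+1$ and... actually $3 \mid q+1$ iff $q \equiv 2 \bmod 3$) doesn't change the count — i.e. getting the bookkeeping of characters of order $3$ versus order $6$ versus order $2$ correct, and making sure the "attains $2q$" condition is being counted for a fixed $\chi$ and not accidentally over all $\chi$. I expect the actual proof in the paper handles this by a direct argument: expanding the sum, using the splitting of $f$, and reducing (\ref{general character sum}) to a short explicit expression in Jacobi sums $J(\psi, \psi)$ whose value is known to be $\pm q$ or $q$ in these norm-character cases, so that "$= 2q$" becomes a system of equations among the three pairwise differences of the roots $0, 1, c$ being (or not being) cubes/squares in $\FF_{q^2}$, a condition that is checkable and whose solution set has the stated size. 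I would verify the constant by testing small $q$ (e.g. $q = 5, 7, 11, 13$) before committing to the orbit-counting argument above.
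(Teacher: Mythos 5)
Your proposal has a genuine gap, and it starts from a faulty reduction. First, a character $\chi$ of order dividing $d=q+1$ does \emph{not} factor through the norm $\FF_{q^2}^\times\to\FF_q^\times$ (norm-pullbacks have order dividing $q-1$); rather $\chi$ is trivial on $\FF_q^\times$, so your translation of ``$S_c=2q$'' into supersingularity/maximality of the Legendre curve $y^2=x(x+1)(x+c)$ is unjustified: for $\chi$ of order $e>3$ the sum is governed by one $\chi$-eigenspace of the Jacobian of $y^e=f(x)$, not by the elliptic curve, and no Deuring/Hasse-polynomial count applies. More importantly, the route you say you would actually pursue --- the $S_3$-symmetry in $c$ plus orbit counting --- contains no mechanism that can produce an upper bound. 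Invariance of $S_c$ under $c\mapsto c^{-1},\,1-c,\dots$ only says the extremal set is a union of orbits; it cannot by itself rule out that \emph{all} $c\neq 0,1$ are extremal, and your derivation of the constants ($3$, $1/2$, $-9$) is numerology rather than an argument. Note also that the intended regime is the opposite of ``supersingular values are rare'': the bound $(3q-9)/4$ allows roughly three quarters of all $c$ to attain $2q$, and the paper's Remark shows this is actually achieved, so any rarity-based strategy is doomed from the start.

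What is missing is a quantitative input, and the paper's proof supplies it by a two-line averaging argument, close in spirit to the ``direct argument'' you only gesture at in your last paragraph but do not carry out. One evaluates $\sum_{c\in\FF_q}S_c=q(q-3)$ exactly (a Jacobi-sum computation; see Proposition~\ref{lidl exercise}), notes $S_0=S_1=q$ since those sums degenerate to Jacobi sums, and applies the Weil lower bound $S_c\ge -2q$ to every remaining term. If $N$ of the $c$'s attain $2q$, then
\[
q(q-3)=\sum_{c\in\FF_q}S_c\;\ge\; q+q+N\cdot 2q+(q-2-N)\cdot(-2q)=q(6+4N-2q),
\]
which rearranges to $N\le (3q-9)/4$. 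To salvage your write-up you would need either to reproduce this averaging step or to replace it with some other exact or asymptotic evaluation of a moment of $S_c$; the symmetry and supersingularity considerations alone will not close the argument.
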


\begin{proof}
Set 
\[
S_c := \sum_{x \in \FF_{q^2}} \chi(x(x+1)(x+c)).
\]
One checks that $\sum_{c \in \FF_q} S_c = q(q-3)$ and $S_0 = S_1 = q$.  By Weil's theorem, $S_c \geq -2q$ for all $c$.  The theorem is proved by considering
\[
q(q-3) = \sum_{c \in \FF_q} S_c \geq q + q + N \cdot 2q + (q-2-N) \cdot -2q = q(6 + 4N - 2q).  
\] 
\end{proof}

\begin{rmk}
In general, the bound of this theorem cannot be strengthened.  We have used Sage to verify we have $N = (3p-9)/4$ for all primes $p \equiv 3 \bmod 4$ between $7$ and $139$ in the specific case that $\chi$ has order 4.  
For $c \in \FF_{p}$, $c \neq 0,1$, computations suggest that $S_c = -2p$ when both $c-1$ is a quadratic residue and $c$ is a quadratic nonresidue in $\FF_{p}$, and $S_c = 2p$ in all other cases.  The claim $N = (3q-9)/4$ continues to hold for $q = 7^3$ and $\chi$ of order 4, however our description in terms of quadratic residues and nonresidues no longer holds in this prime power case.  
\end{rmk}

\begin{rmk}
There is no loss in generality in assuming our polynomial $f(x)$, which splits completely in $\FF_q[x]$, has the special form $x(x+1)(x+c)$.  If we begin with a polynomial of the form $(x+c_1)(x+c_2)(x+c_3)$, we may first replace $x+c_1$ by $x$, and then because $\chi(a) = 1$ for all $a \in \FF_{q}^*$, we can rescale again to reach the desired form.  

One may obtain similar results without the requirement that $f(x)$ split completely in $\FF_q[x]$.  They are proved in a completely analogous way: sum across all such polynomials and apply the Weil bound.  We omit these results as they are not used in the applications below.
\end{rmk}

We now explain a specific context from arithmetic geometry in which these character sums arise.  
Let $E_d$ denote the elliptic curve over the function field $\FF_{q^2}(t)$ defined by the equation 
\begin{equation} \label{the elliptic curve}
E_d : \quad y^2 + xy - t^d y = x^3,
\end{equation}
and let $\mathcal{F}_d$ denote the degree~$d$ Fermat surface over $\FF_{q^2}$ defined by the equation
\[
\mathcal{F}_d: \quad x_0^d + x_1^d + x_2^d + x_3^d = 0.
\]
Our primary strategy is to relate lines on $\mathcal{F}_d$ to rational points on $E_d$ using an explicit rational map between $\mathcal{F}_d$ and a certain affine surface related to $E_d$.  This strategy is similar to the strategy carried out in \cite{Ulm02}.  However, that paper considers Jacobi sums, which have been more well-studied than the character sums we consider.  The character sums in \cite[\S7.5]{Ulm02} describe Frobenius actions on cohomology, while ours describe intersection pairings.  The difference arises because \cite{Ulm02} does not seek to produce explicit points.  (The elliptic curves considered in \cite{Ulm02} are also different from our curves $E_d$.)  

The description of the following family of lines on $\mathcal{F}_d$ is taken from \cite[\S5.2]{SSV10}.

\begin{defn} \label{standard line def}
Let $a \in \FF_{q}$, $b \in \FF_{q^2} \setminus \FF_q$ be elements such that $a^2 +1 = b^2$.  Let $L_{a,b}$ denote the line on $\mathcal{F}_d$ parametrized as follows:
\[
L_{a,b}: \quad \PP^1_{\FF_{q^2}} \rightarrow \mathcal{F}_d, \quad [u:v] \mapsto [u:v:au+bv:av + bu].
\] 
It will also be convenient to have the following reparametrization of $L_{a,b}$:
\[
L_{a,b}: \quad \PP^1_{\FF_{q^2}} \rightarrow \mathcal{F}_d, \quad [s_0:s_1] \mapsto [s_0:\alpha s_0 + \beta s_1: -\alpha s_1 - \beta s_0: s_1],
\]
where $\alpha := -a^{-1}b$ and $\beta := a^{-1}$.  
\end{defn}

Next we give an explicit ring-theoretic procedure for producing a point in $E_d(\FF_{q^2}(t))$ from such a line $L_{a,b}$. (A different description of this procedure is given in Proposition~\ref{compare two maps}.)  
Over the affine sets $x_3 \neq 0$ and $s_1 \neq 0$, the map of $L_{a,b}$ into $\mathcal{F}_d$ corresponds to the ring map
\begin{equation} \label{parametrized line eqn}
\FF_{q^2}[x_0, x_1, x_2]/(x_0^d + x_1^d + x_2^d + 1) \rightarrow k[s_0], \quad x_0 \mapsto s_0,\, x_1 \mapsto \alpha s_0 + \beta,\, x_2 \mapsto -\alpha - \beta s_0.
\end{equation}
Consider the affine surface over $\FF_{q^2}$ with coordinate ring $\FF_{q^2}[x,y,t]/(y^2 + xy - t^dy - x^3)$; the generic fiber of the natural map from this surface to $\Spec \FF_{q^2}[t]$ is isomorphic to the affine part of $E_d$.  We have a map from the affine piece $x_3 = 1$ on $\mathcal{F}_d$ to this affine surface given in terms of coordinate rings by 
\begin{equation} \label{map to affine surface}
\phi: (x,y,t) \mapsto (-x_0^dx_2^d, -x_0^{2d}x_2^d, x_0x_1x_2).
\end{equation}
Composing (\ref{parametrized line eqn}) with (\ref{map to affine surface}), we find a map $\FF_{q^2}[x,y,t]/(y^2 + xy - t^dy - x^3) \rightarrow \FF_{q^2}[s_0]$ which in particular sends $t$ to $-s_0 (\alpha s_0 + \beta) (\alpha + \beta s_0)$.  View this as a map of $\FF_{q^2}[t]$-modules, and tensor with $\FF_{q^2}(t)$.  This left-hand side is the coordinate ring of the affine piece of our elliptic curve $E_d$ and the right-hand side is $\FF_{q^2}[s_0] \otimes_{\FF_{q^2}[t]} \FF_{q^2}(t)$, where the non-obvious map sends $t \mapsto -s_0(\alpha s_0 + \beta) (\alpha + \beta s_0)$.  Because $-s_0(\alpha s_0 + \beta) (\alpha + \beta s_0) - t$ is an irreducible polynomial in the variable $s_0$ over the coefficient ring $\FF_{q^2}(t)$, this corresponds to a point on $E_d$ with coordinates in $\FF_{q^2}(s_0)$, which is a degree 3 extension of $\FF_{q^2}(t)$.  We are seeking a point in $E_d(\FF_{q^2}(t))$, not one in $E_d(\FF_{q^2}(s_0))$.  To this end, we use the group law on $E_d$ to sum the points corresponding to the three total Galois conjugates (i.e., to ``take the trace'') of the point with coordinates in $\FF_{q^2}(s_0)$.  We denote this correspondence as follows:
\begin{equation} \label{line to point}
L_{a,b} \leadsto \phi_*(L_{a,b}) \in E_d(\FF_{q^2}(t)).
\end{equation}

Consider the automorphism of $\FF_{q^2}(t)$ induced by $t \mapsto \zeta_d t,$ where $\zeta_d$ is a fixed primitive $d$-th root of unity in $\FF_{q^2}$.  This induces an automorphism of $E_d(\FF_{q^2}(t))$.  The main arithmetic theorems of this paper are the following Theorems~\ref{main theorem 1} and \ref{main theorem 2}.  

\begin{thm} \label{main theorem 1}
Assume $q \equiv 7 \bmod 12$.  The group $\mu_d$ of $d$-th roots of unity in $\FF_{q^2}$ acts on $E_d(\FF_{q^2}(t))$ as described above.  Let $\phi_*$ be as in (\ref{line to point}).  Let $b$ denote a primitive $12$-th root of unity in $\FF_{q^2}$ and let $a = b^2$.  The element $\phi_*(L_{a,b})$ and its $\mu_d$-translates together rationally generate $E_d(\FF_{q^2}(t)).$ 
\end{thm}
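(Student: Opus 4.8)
The plan is to show that the subgroup $G \subseteq E_d(\FF_{q^2}(t))$ generated by $\phi_*(L_{a,b})$ and its $\mu_d$-translates has rank equal to $\Rank E_d(\FF_{q^2}(t)) = q$ (using $q \equiv 7 \bmod 12$, so $q \equiv 1 \bmod 3$). Since $G$ is automatically a sub-$\mu_d$-module, the essential point is to compute the rank of $G$ as a module over the group ring $\ZZ[\mu_d]$, or rather over $\QQ[\mu_d] \cong \prod_{e \mid d} \QQ(\zeta_e)$, and compare it with the known decomposition of $E_d(\FF_{q^2}(t)) \otimes \QQ$ as a $\QQ[\mu_d]$-module. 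So the first step is to record that $\QQ[\mu_d]$-module structure: each isotypic component is either $0$ or a copy of the relevant cyclotomic field $\QQ(\zeta_e)$, and one must identify precisely which characters $\psi$ of $\mu_d$ occur. This should follow from the rank formula already invoked in the abstract (rank $q$ when $q \equiv 1 \bmod 3$), combined with the fact that the $\mu_d$-action has a known trace/character, presumably worked out in the body of the paper via the Fermat surface; I would cite that computation.

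The second and main step is to detect, for each character $\psi$ that occurs, that the $\psi$-component of $\phi_*(L_{a,b})$ is nonzero. The natural tool is a height pairing (the canonical N\'eron--Tate height on $E_d(\FF_{q^2}(t)) \otimes \RR$, or equivalently an intersection pairing on the associated elliptic surface). Under the correspondence in~(\ref{line to point}), the trace construction turns the line $L_{a,b}$ on $\mathcal{F}_d$ into the point $\phi_*(L_{a,b})$, and the height pairing of $\phi_*(L_{a,b})$ with a $\mu_d$-translate $\zeta_d^j \cdot \phi_*(L_{a,b})$ should be expressible in terms of an intersection number of the corresponding lines on $\mathcal{F}_d$ — this is exactly the point the introduction flags, that "ours describe intersection pairings." So I would: (i) set up the matrix $M_{jk}$ of height pairings $\langle \zeta_d^j \phi_*(L_{a,b}), \zeta_d^k \phi_*(L_{a,b}) \rangle$, note it is a circulant matrix in $j-k$ because the action is by $\mu_d$; (ii) its eigenvalues are $\widehat{m}(\psi) = \sum_{j} m_j \psi(\zeta_d^j)$ over characters $\psi$ of $\mu_d$, where $m_j = \langle \phi_*(L_{a,b}), \zeta_d^j \phi_*(L_{a,b})\rangle$; (iii) show $\widehat{m}(\psi) \neq 0$ for every $\psi$ appearing in $E_d \otimes \QQ$. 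Step (iii) is where the character sums of the title enter: the intersection numbers $m_j$, or their transforms $\widehat m(\psi)$, should reduce (via the explicit parametrizations of $L_{a,b}$ and the map $\phi$) to sums of the form $\sum_{x} \chi(f(x))$ with $f$ a split cubic, and the nonvanishing of $\widehat m(\psi)$ becomes the statement that the relevant sum does not vanish — or, more likely, precisely that it \emph{attains} the Weil bound $2q$, which is why the specific choice $b$ a primitive $12$-th root of unity, $a = b^2$, is made. One checks $a \in \FF_q$ and $b \in \FF_{q^2}\setminus\FF_q$ and $a^2 + 1 = b^2$ for this choice, so $L_{a,b}$ is a legitimate line of Definition~\ref{standard line def}.

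The third step is bookkeeping: once every eigenvalue $\widehat m(\psi)$ over the occurring characters is shown nonzero, the $\psi$-component of $\phi_*(L_{a,b})$ is nonzero for each such $\psi$, hence $G \otimes \QQ$ surjects onto every isotypic piece of $E_d(\FF_{q^2}(t)) \otimes \QQ$, forcing $\Rank G = \Rank E_d(\FF_{q^2}(t))$ and hence that $G$ is finite-index — "rationally generates" — in $E_d(\FF_{q^2}(t))$. I expect the main obstacle to be Step (iii): translating the height/intersection pairing of the traced points into an honest character sum and then evaluating that sum for the arithmetic-progression family $q \equiv 7 \bmod 12$ with the chosen $a,b$. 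This is a genuine computation with Gauss/Jacobi-type sums, and the $12 \mid q - 7$ hypothesis is presumably exactly what makes the relevant sum evaluable in closed form (e.g., via the congruence conditions on quadratic residues flagged in the Remark after Theorem~\ref{bound on N}); controlling that uniformly in $q$, rather than just checking small cases, is the crux. A secondary subtlety is making sure the trace construction does not accidentally kill a component — i.e., that multiplication-by-3 (the degree of $\FF_{q^2}(s_0)/\FF_{q^2}(t)$) does not annihilate anything, which is automatic rationally.
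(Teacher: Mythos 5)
Your skeleton matches the paper's strategy in outline (isotypic decomposition under the $\mu_d$-action, detecting nonvanishing of each component of the image of $L_{a,b}$ via a pairing, and converting those pairings into multiplicative character sums), but the two decisive steps are missing, and the one place where you commit to a guess, you guess in the wrong direction. The paper's identity (Theorem~\ref{the character sum}) is $d^3\langle v_\lambda,v_\lambda\rangle = -2q + S_{b^2,\hat i}$, where $v$ is the class of $L_{a,b}$ in $\NS(\mathcal{F}_d)$ and $S_{b^2,\hat i}=\sum_x \chi(x^{i_0}(x+1)^{i_1}(x+b^2)^{i_2})$; hence the $\lambda$-component is detected as nonzero precisely when the sum \emph{misses} the Weil bound, $S\neq 2q$, whereas you propose that the needed condition is ``that it attains the Weil bound $2q$'' (or that it is merely nonzero). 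Moreover, the arithmetic input for $q\equiv 7\bmod 12$ is not a closed-form evaluation of the sum at all: with $\eta=b^2$ a primitive sixth root of unity (so $\eta^2-\eta+1=0$), the substitution $x\mapsto -\eta(x+1)$ has order $3$ and fixes the summand, so summing over orbits gives $S\equiv \chi(u^i(u+1)^i(u+\eta)^i)\equiv 1 \bmod 3\ZZ[\zeta_d]$ at the unique fixed point $u$, while $2q\equiv 2\bmod 3$; therefore $S\neq 2q$ (Proposition~\ref{mod 3 prop}). This is exactly the step you flag as the ``crux'' you cannot control uniformly in $q$, so as written the proposal has a genuine gap at the decisive point, not just an omitted computation.

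There are also structural differences you would have to repair to make your route rigorous. The paper never forms the circulant matrix of N\'eron--Tate heights of the $\mu_d$-translates of $\phi_*(L_{a,b})$; it works upstairs on $\NS(\mathcal{F}_d)$ with the full group $T=\mu_d^4/\mu_d$ and the geometric intersection pairing, classifying all intersections of $L_{a,b}$ with its $T$-translates (Proposition~\ref{intersection summary prop}, including the parametrization by $\gamma$ with $\tr(\gamma)\neq 0$, which is where the cubic $x(x+1)(x+b^2)$ actually comes from), and then transfers the conclusion to $E_d(\FF_{q^2}(t))$ by the surjectivity of $L^1(\NS(U)^{T_E})\otimes\ZZ[d^{-1}]\rightarrow E_d(\FF_{q^2}(t))\otimes\ZZ[d^{-1}]$ (Corollary~\ref{cor fixed to curve}) together with the projection formalism of Section~\ref{representation theory section} and the identification $\phi_*(L_{a,b})=(L_{a,b}-3E_{x_1}).E_d$ (Proposition~\ref{compare two maps}). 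Your height-pairing variant is plausible in principle, but it would require handling the fibral corrections in the height pairing and relating pairings of traced points downstairs to intersections of lines upstairs; none of that bookkeeping is supplied, and without it the reduction of $\widehat m(\psi)$ to the stated character sums remains an assertion rather than a proof.
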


\begin{thm} \label{main theorem 2}
Assume $q \equiv 1 \bmod 4$.  The points $\phi_*(L_{a,b}) \in E_d(\FF_{q^2}(t))$, across all lines $L_{a,b}$ as in Definition~\ref{standard line def}, together with their $\mu_d$-translates, rationally generate $E_d(\FF_{q^2}(t))$.  In fact, $n-1$ of these lines suffice, where $n$ is the number of positive divisors of $d = q+1$.  
\end{thm}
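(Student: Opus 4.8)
The plan is to exploit the given formula for the rank of $E_d$ together with the $\mu_d$-equivariant dictionary (developed in the earlier sections) between lines on $\mathcal{F}_d$ and points of $E_d$, reducing the entire statement to the non-vanishing of a family of averaged character sums of the form~(\ref{general character sum}).

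The first step is to describe $M := E_d(\FF_{q^2}(t)) \otimes_{\ZZ}\QQ$ as a module over $\QQ[\mu_d] \cong \prod_{e \mid d}\QQ(\zeta_e)$, the action coming from $t \mapsto \zeta_d t$, and to write $M = \bigoplus_{e \mid d} M_e$ with $M_e$ a vector space over the field $\QQ(\zeta_e)$. Since the rank of $E_d$ is $q$ when $q\equiv 1\bmod 3$ and $q-2$ when $q\equiv 2\bmod 3$, while $\sum_{e\mid d}\varphi(e)=q+1$, the rank formula together with the vanishing of the trivial isotypic part $M_1$ (and, when $q\equiv 2\bmod 3$, of a further $2$-dimensional part) shows that each $M_e$ is either $0$ or a single copy of $\QQ(\zeta_e)$, and that at most $n-1$ of them are nonzero. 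Because each nonzero $M_e$ is then a field, a class of $M$ whose image in $M_e$ is nonzero already generates $M_e$ under the $\mu_d$-action. Hence it suffices to produce standard lines $L_{a_1,b_1},\dots,L_{a_m,b_m}$ such that, for every $e\mid d$ with $M_e\neq 0$, at least one $\phi_*(L_{a_i,b_i})$ has nonzero $M_e$-component: the images of these lines and their $\mu_d$-translates will then span $M$. Choosing the lines greedily -- each new line being required to cover a previously uncovered $M_e$ -- gives $m\leq n-1$, which is the quantitative assertion.

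The remaining, and essential, point is the non-vanishing of isotypic components. Here I would use that the projection of $\phi_*(L_{a,b})$ onto $M_e$ is recorded, up to a nonzero scalar, by a Fourier coefficient (with respect to the $\mu_d$-action) of the function $\zeta\mapsto \langle \phi_*(L_{a,b}),\, \zeta\cdot\phi_*(L_{a,b})\rangle$ built from the canonical height pairing; this Fourier coefficient is an intersection number on a resolution of $\mathcal{F}_d$ and, by the computations behind Section~\ref{props of char sum}, equals a character sum of the shape~(\ref{general character sum}) with $\chi$ of order $e$ and $f$ a separable cubic determined by $a$ and $b$ (which, after the usual rescaling, may be taken of the form occurring in the Question). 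Thus $(\phi_*(L_{a,b}))_e=0$ exactly when this sum vanishes. To exhibit, for a fixed admissible $e$, some $(a,b)$ with $(\phi_*(L_{a,b}))_e\neq 0$, I would sum the associated character sums over all $a\in\FF_q$ for which $a^2+1$ is a non-square in $\FF_q$ (so that $L_{a,b}$ is defined, with $b$ then determined up to sign) and show that this total is nonzero, by a direct evaluation in the spirit of the identity $\sum_c S_c = q(q-3)$ from the proof of Theorem~\ref{bound on N}, using the Weil bound to control the error terms. The congruence $q\equiv 1\bmod 4$ enters exactly here: it guarantees a sufficient supply of admissible $a$ and makes the quadratic-residue bookkeeping in the averaged sum produce a nonzero main term for each relevant $e$.

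The main obstacle is this last step -- establishing, uniformly over the divisors $e$ of $d$ with $M_e\neq 0$, that the $e$-isotypic projection of some standard line is nonzero. The module-theoretic reduction is routine once the rank formula and the line-to-point dictionary are granted; the real content is the character-sum input, namely identifying the height Fourier coefficients with sums~(\ref{general character sum}) and then running the averaging argument over the family of admissible $a$ to force non-vanishing. (By contrast, Theorem~\ref{main theorem 1} requires a \emph{single} line whose component in \emph{every} nonzero $M_e$ is nonzero, which needs the sharp evaluation of these sums at one distinguished line rather than merely an average over the family.)
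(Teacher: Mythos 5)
Your overall architecture does match the paper's: reduce to non-vanishing of $\mu_d$-isotypic projections of the classes of the lines $L_{a,b}$, detect non-vanishing through an intersection-pairing ``Fourier coefficient'' which is a cubic multiplicative character sum, and group the exponents by divisors of $d$ (your $\QQ(\zeta_e)$-component bookkeeping is the same as the paper's Galois-conjugation remark) to get $n-1$ lines. But the crucial quantitative step rests on a wrong identification. By Theorem~\ref{the character sum}, the isotypic inner product is $d^3\langle v_\lambda,v_\lambda\rangle=-2q+S_{b^2,\hat{i}}$, so the obstruction to a nonzero projection is $S=2q$ (the Weil \emph{upper bound} being attained), not $S=0$; your assertion that ``$(\phi_*(L_{a,b}))_e=0$ exactly when this sum vanishes'' is wrong both in content and in direction (only $S\neq 2q\Rightarrow$ projection $\neq 0$ is available or needed). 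Consequently your plan --- sum the character sums over admissible $a$ and show the total is nonzero --- aims at the wrong target: $S_c$ really does equal $2q$ for a large proportion of $c$ (Theorem~\ref{bound on N} and the remark following it), so non-vanishing of the average of $S_c$ gives no information. What must be shown is that not \emph{every} admissible $c$ attains $S_c=2q$.

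Even after correcting the target, the averaging you sketch does not close the argument. From $\sum_{c\in\FF_q}S_{c,\hat{i}}=q(q-3)$ (Proposition~\ref{lidl exercise}) and $|S_c|\le 2q$ alone, it is perfectly consistent that all $\frac{q-1}{4}$ admissible $c$ have $S_c=2q$: their contribution, roughly $q^2/2$, is easily absorbed by the remaining $c$. The paper's Proposition~\ref{exists line prop} needs two further inputs absent from your sketch: the invariance $S_c=S_{\tau(c)}$ under the six transformations $c\mapsto c^{-1},\,1-c,\dots$ (Proposition~\ref{c-orbit invariance}), and the fact --- this is where $q\equiv 1\bmod 4$ actually enters, via Lemmas~\ref{count admissible values} and~\ref{c-orbits} --- that the orbit of an admissible $c$ contains exactly two admissible and four inadmissible values (except for $c=2$). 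This propagates the hypothetical equality $S_c=2q$ from the admissible values to $\frac{3(q-1)}{4}$ values of $c$, and then the exact total, the Jacobi values $S_0,S_1$ (Lemma~\ref{jacobi lemma}) and the Weil lower bound $S_c\ge -2q$ yield a numerical contradiction. A direct evaluation of $\sum_{\mathrm{admissible}\ c}S_c$, as you propose, would instead require inserting quadratic characters in $c$ to detect admissibility and then estimating genuinely two-variable character sums, which the one-variable Weil bound you invoke does not cover. A smaller point: the claim that each $M_e$ is at most one copy of $\QQ(\zeta_e)$ does not follow from the rank formula by counting dimensions; it comes from the structure of $\NS(\mathcal{F}_d)$ (Proposition~\ref{decomposition over cyclotomic}), which is also what underlies the rank computation itself.
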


\begin{eg}
Theorems~\ref{main theorem 1} and \ref{main theorem 2} together account for all cases except for $q \equiv 11 \bmod 12$.  Computations show that the points $\phi_*(L_{a,b})$ and their $\mu_d$-translates are not sufficient to rationally generate $E_d(\FF_{q^2}(t))$ when $q = 11$ or $71$. 
\end{eg}

\begin{eg} \label{example of explicit point}
We apply Theorem~\ref{main theorem 1} in the case $q = 7$.  Choose $a,b \in \FF_{49}$ such that $a = b^2 = 3$.  Using Sage to carry out the procedure described above, we find $\phi_*(L_{a,b}) = P$ where
{\tiny
\begin{align*}
P_x &= \frac{-2t^{14} - 2t^{13} + 3t^{12} + t^{11} + t^9 - t^7 + 2t^5 - t^4 - 3t^3 + 3t^2 + 2t + 2}{-2t^8 + 2t^7 + 3t^6 + 3t^5 + t^4 - 3t^3 - 2t^2 - t - 1} \\
\intertext{and}
P_y &= -\frac{t^{21} + t^{20} - t^{19} + 2t^{18} - t^{16} + 2t^{15} + 2t^{14} - 3t^{13} + 2t^{11} + t^{10} + 2t^9 - 2t^8 - t^7 + t^6 + 2t^4 - 2t^3 + t^2 - 1}{t^{12} + 2t^{11} - t^{10} + 2t^9 + 3t^8 + t^6 - t^4 + 2t^3 - t^2 - 2t + 1}.
\end{align*}}Then Theorem~\ref{main theorem 1} implies that the point $P$, together with its $\mu_d$-translates, generates a full-rank (i.e., rank 7) subgroup of $E(\FF_{49}(t))$.
\end{eg}

Theorems~\ref{main theorem 1} and \ref{main theorem 2} are proved by making careful analysis of a character sum $S_c$ as in (\ref{general character sum}): we compute certain inner products of elements in $\NS(\mathcal{F}_d) \otimes_{\ZZ} \QQ(\zeta_d)$ as being equal to $2q - S_c$, and we deduce rational generation as in Theorems~\ref{main theorem 1} and \ref{main theorem 2} as a consequence of suitably many of these inner products being nonzero.

\subsection*{Notation and conventions} We write $E_d$ for the elliptic curve defined in (\ref{the elliptic curve}) and $\mathcal{E}_d$ (together with a fixed morphism $\mathcal{E}_d \rightarrow \PP^1$) for the associated elliptic surface (as in \cite[\S3.2]{Ulm02} or \cite[Lecture~3,~Proposition~1.1]{Ulm11}).  For $X$ a surface and $U \subseteq X$ an open subset, we write $\NS(U)$ for the image of the composite map $\Div U \rightarrow \Div X \rightarrow \NS(X).$  In Definition~\ref{L1 defn} below, we define a subgroup $L^1(\NS(\mathcal{E}_d)) \leq \NS(\mathcal{E}_d)$.  For any group $G$ and homomorphism $f: G \rightarrow \NS(\mathcal{E}_d)$, we write $L^1 (G)$ for the subgroup of $G$ determined by $f^{-1} (L^1(\NS(\mathcal{E}_d)))$.

\subsection*{Acknowledgments} The authors extend special thanks to Doug Ulmer, for suggesting this project and for giving many valuable suggestions, and to Daqing Wan, for providing explicit character sum arguments which will appear in a subsequent paper.  The authors are also very grateful to Lisa Berger, Dustin Clausen, Rafe Jones, Kiran Kedlaya, Alice Silverberg and David Zureick-Brown for many helpful discussions.   
The first author is partially supported by the Danish National Research Foundation through the Centre for Symmetry and Deformation (DNRF92).

\section{Relation to the Fermat surface} \label{to Ed section}

We will eventually reduce Theorems~\ref{main theorem 1} and \ref{main theorem 2} to statements involving only the Fermat surface $\mathcal{F}_d$.  The elliptic surface $\mathcal{E}_d$ is birational to a certain quotient of the Fermat surface $\mathcal{F}_d$ by a group $T_E$ of order $d^2$.  This section provides a more detailed description of how $\mathcal{F}_d$ and $\mathcal{E}_d$ are related.  Much of this is similar to the results of Section~5 of \cite{Ulm02}.  

We consider the Fermat surface $\mathcal{F}_d$ together with the rational map $\pi: \mathcal{F}_d \dashrightarrow \PP^1$ given by 
\[
\pi: [x_0: x_1 : x_2: x_3] \mapsto [x_0 x_1 x_2 : x_3^3].
\]  
The map $\pi$ can be resolved into a morphism by making three consecutive blow-ups at $3d$ total points (resulting in $9d$ total blow-ups).  The following lemma describes the three consecutive blow-ups at one particular point. 

\begin{lem} \label{three blow-ups}
Let $\zeta_{2d}$ denote a primitive $2d$-th root of unity.  Consider the point $x := [0: \zeta_{2d} : 1 : 0]$ in $\mathcal{F}_d$.  The rational map $\pi$ defined above can be resolved to a regular map in a neighborhood of $x$ using a series of three blow-ups.  The induced map to $\PP^1$ sends the first two exceptional divisors to $[1:0]$ and maps the third exceptional divisor bijectively to $\PP^1$.  
\end{lem}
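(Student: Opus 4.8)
The plan is to work in explicit affine coordinates near the point $x = [0:\zeta_{2d}:1:0]$, compute the ideal defining the rational map $\pi$ on the chart $x_2 \neq 0$, and then blow up three times, each time tracking both the strict transform of $\mathcal{F}_d$ and the indeterminacy locus of (the pullback of) $\pi$.

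First I would choose affine coordinates on the chart $x_2 = 1$, writing $(x_0, x_1, x_3)$ for the remaining coordinates, so that $\mathcal{F}_d$ is cut out by $x_0^d + x_1^d + 1 + x_3^d = 0$ and the point $x$ sits at $(0, \zeta_{2d}, 0)$. Shifting $x_1$ by $\zeta_{2d}$ (a unit), I would localize at the origin: since $\zeta_{2d}^d = -1$, the surface equation becomes $x_0^d + x_3^d + (\text{unit}) \cdot x_1 + (\text{higher order in } x_1) = 0$, so $\mathcal{F}_d$ is smooth at $x$ and, after solving for $x_1$ via the implicit function theorem (formally, in the completed local ring), it is a smooth surface with local coordinates $x_0, x_3$. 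In these coordinates $\pi$ is given by $[x_0 x_1 : x_3^3]$, and since $x_1$ is a unit near $x$, the map is $[x_0 \cdot (\text{unit}) : x_3^3]$, i.e. up to a unit it is $[x_0 : x_3^3]$. The indeterminacy locus on the smooth surface is thus the single reduced point $x_0 = x_3 = 0$, and the question reduces to resolving the rational map $(x_0, x_3) \mapsto [x_0 : x_3^3]$ from a smooth surface by blow-ups at points.

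Next I would carry out the three blow-ups of the local model $[x_0 : x_3^3]$. The first blow-up of the origin introduces an exceptional $\PP^1$; on one chart the coordinates are $(x_0, u)$ with $x_3 = x_0 u$, and the map becomes $[x_0 : x_0^3 u^3] = [1 : x_0^2 u^3]$, which is regular there and sends the exceptional divisor ($x_0 = 0$) to $[1:0]$; on the other chart the coordinates are $(v, x_3)$ with $x_0 = v x_3$, the map is $[v x_3 : x_3^3] = [v : x_3^2]$, still indeterminate along $v = x_3 = 0$, a single point on the first exceptional divisor. Blowing up that point a second time, the same computation shows the pullback of $\pi$ still has a single indeterminacy point, again lying over $[1:0]$, so the second exceptional divisor also maps to $[1:0]$; and the local model has dropped to $[v : x_3]$-type behavior. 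The third blow-up resolves the remaining point: the map restricted to the third exceptional divisor is, in suitable coordinates, $[w : 1] \mapsto [w : 1]$ on one chart and its reciprocal on the other, hence an isomorphism onto $\PP^1$. Keeping track of how the multiplicity drops from $3$ (the ramification in $x_3$) through $2$ to $1$ is exactly what forces the pattern: first two exceptional divisors collapse to $[1:0]$, the third maps isomorphically.

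The main obstacle I anticipate is purely bookkeeping: at each blow-up one must check the indeterminacy locus really is a single point (not a curve, and not disjoint from the previous exceptional divisor), identify which chart of the blow-up it lies in, and correctly propagate the unit factor coming from $x_1$ so that it never interferes. A secondary subtlety is making sure that the ``solving for $x_1$'' step is legitimate — one should either pass to the completion of the local ring or, more cleanly, observe that $\partial/\partial x_1$ of the Fermat equation is a unit at $x$ so $\{x_0, x_3\}$ is a regular system of parameters and it suffices to track the images of $x_0 x_1$ and $x_3^3$ in the local ring, with $x_1$ a unit throughout. Once the local model is isolated the blow-up computations are short and the stated conclusion falls out.
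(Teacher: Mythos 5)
Your proposal is correct and takes essentially the same approach as the paper: three successive point blow-ups computed in explicit local coordinates, checking at each stage that the indeterminacy locus is a single point, that the first two exceptional divisors map to $[1:0]$, and that the third maps bijectively onto $\PP^1$. Your reduction to the clean local model $[x_0 : x_3^3]$ (absorbing the unit $x_1x_2$ and using that $\{x_0,x_3\}$ is a regular system of parameters, valid since $p \nmid d$) is just a tidier packaging of the same computation that the paper carries out with the factor $x_1x_2$ kept explicit.
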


\begin{proof}
We follow \cite[II.4.2]{Shaf94} for our explicit description of blow-ups.  Write $[t_0 : t_3]$ for coordinates on what will be the first exceptional divisor, $\PP^1$.  In a suitable neighborhood $V$ of $x$, the first-blow up is defined in $V \times \PP^1$ by $x_0 t_3 = x_3 t_0$.  The induced map $\pi$ becomes 
\[
\Big( [x_0: x_1 : x_2: x_3], [t_0: t_3] \Big) \mapsto [t_0 x_1 x_2: t_3 x_3^2].
\]
We next blow-up at the remaining point of indeterminacy, $\Big( [0: \zeta_{2d} : 1: 0], [0: 1] \Big).$  (Notice that the other points on the exceptional divisor all get mapped to $[1: 0]$, i.e., this is a fibral divisor.)  Writing $[u_0: u_3]$ for the coordinates on the next exceptional divisor $\PP^1$, so that the new blow-up has coordinates 
\[
\Big( [x_0: x_1 : x_2: x_3], [t_0: t_3], [u_0 : u_3] \Big), 
\]
constrained by the additional equation $t_0 u_3 =  x_3 u_0.$  The induced map $\pi$ becomes
\[
\Big( [x_0: x_1 : x_2: x_3], [t_0: t_3], [u_0 : u_3] \Big) \mapsto [u_0 x_1 x_2 : t_3 u_3 x_3].
\]
Similarly, after one final blow-up introducing the additional equation $u_0 v_3 = x_3 v_0$, we find
\[
\pi: \Big( [x_0: x_1 : x_2: x_3], [t_0: t_3], [u_0 : u_3], [v_0 : v_3] \Big) \mapsto [v_0 x_1 x_2 : t_3 u_3 v_3].
\]
Because we are in a neighborhood where neither $x_1$ nor $x_2$ vanish, the only possible point of indeterminacy must occur when $[v_0 : v_3] = [0 : 1]$.  From the equation $u_0 v_3 = x_3 v_0$, we deduce $[u_0 : u_3] = [0:1]$.  From the equation $t_0 u_3 =  x_3 u_0$ we deduce $[t_0 : t_3] = [0: 1]$.  But we have just shown that any point of indeterminacy of the map $\pi$ must have $t_3 u_3 v_3 = 1$, and this shows that there is no such point.  

The final exceptional divisor gets mapped as
\[
\pi: \Big( [0:\zeta_{2d} : 1: 0], [0: 1], [0 : 1], [v_0 : v_3] \Big) \mapsto [v_0 \zeta_{2d} : v_3],
\]
and this is clearly a bijective map to $\PP^1$.  
\end{proof}

\begin{rmk} \label{blow-up remark}
Of course the analogue of Lemma~\ref{three blow-ups} holds at each of the $3d$ points of indeterminacy of $\pi$.  We have phrased it in terms of a specific point $x$ for simplicity.  We write $\widehat{\mathcal{F}}_d$ for the result of these $9d$ total blow-ups.  This surface comes equipped with a \emph{morphism} $\widehat{\mathcal{F}}_d \rightarrow \PP^1$.   
\end{rmk}

\begin{defn} \label{T def}
Let $\mu_d$ denote the group of $d$-th roots of unity in $\FF_{q^2}$ and let $T = \mu_d^4/\mu_d$, where $\mu_d \leq \mu_d^4$ corresponds to the diagonal.  Let $T_E$ denote the subgroup consisting of $[t_0: t_1: t_2: t_3]$ such that $t_0 t_1 t_2 = t_3^3$; equivalently, it is the subset of elements with a representative of the form $[t_0: t_1: t_2: 1]$, where $t_0 t_1 t_2 = 1$.  These groups act on $\mathcal{F}_d$ coordinatewise, and these actions extend to $\widehat{\mathcal{F}}_d$ and the morphism $\widehat{\mathcal{F}}_d \rightarrow \PP^1$ factors through the quotient $\widehat{\mathcal{F}}_d/T_E$.  
\end{defn}

\begin{lem} \label{generic fiber lemma}
The generic fiber of $\pi: \widehat{\mathcal{F}}_d/T_E \rightarrow \PP^1$ is birational to $E_d$.  
\end{lem}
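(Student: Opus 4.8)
The plan is to compute the generic fiber of $\pi: \widehat{\mathcal{F}}_d/T_E \to \PP^1$ explicitly using the rational map $\pi: [x_0:x_1:x_2:x_3] \mapsto [x_0x_1x_2 : x_3^3]$ and the map $\phi$ of (\ref{map to affine surface}), and then match the result with $E_d$. First I would work on the affine chart $x_3 \neq 0$ of $\mathcal{F}_d$; since the blow-ups of Remark~\ref{blow-up remark} and Lemma~\ref{three blow-ups} only affect the fiber over $[1:0]$, they are irrelevant for the computation of the \emph{generic} fiber, so it suffices to analyze $\mathcal{F}_d/T_E$ generically over $\PP^1 \setminus \{[1:0]\}$. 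Setting $x_3 = 1$, the function field of $\mathcal{F}_d$ is $K := \FF_{q^2}(x_0,x_1,x_2)$ with the single relation $x_0^d + x_1^d + x_2^d + 1 = 0$, and the map $\pi$ corresponds to the subfield generated by $t := x_0x_1x_2$ (since over this chart $\pi$ sends the point to $[x_0x_1x_2 : 1]$, i.e.\ $t = x_0x_1x_2$). The generic fiber of $\pi$ thus has function field $K/\FF_{q^2}(t)$, and the generic fiber of the quotient $\widehat{\mathcal{F}}_d/T_E \to \PP^1$ has function field $K^{T_E}/\FF_{q^2}(t)$, the subfield of $T_E$-invariants.

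Next I would identify $K^{T_E}$. An element $[\zeta_0:\zeta_1:\zeta_2:1] \in T_E$ (so $\zeta_0\zeta_1\zeta_2 = 1$, each $\zeta_i \in \mu_d$) acts by $x_i \mapsto \zeta_i x_i$. The key invariants are $t = x_0x_1x_2$, which is fixed since $\zeta_0\zeta_1\zeta_2 = 1$, together with the monomials $u := x_0^d$ and $w := x_2^d$ — more precisely $x_0^d, x_1^d, x_2^d$ are each $T_E$-invariant, but they satisfy $x_0^d + x_1^d + x_2^d = -1$ so only two are independent. I would then check (a degree count, using $[K : \FF_{q^2}(t)] = $ order of the generic stabilizer computation, or directly that $|T_E| = d^2$ matches the degree $[K : K^{T_E}]$) that $K^{T_E} = \FF_{q^2}(t, x_0^d, x_2^d)$, or equivalently is generated over $\FF_{q^2}(t)$ by $x := -x_0^d x_2^d$ and $y := -x_0^{2d}x_2^d$ — exactly the quantities appearing in $\phi$ from (\ref{map to affine surface}). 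The relation defining this field is obtained by eliminating: from $\phi$ we have $x = -x_0^d x_2^d$, $y = -x_0^{2d}x_2^d$, $t = x_0x_1x_2$, so $y/x = x_0^d$, hence $x_2^d = -x^2/y$, and $x_1^d = -1 - x_0^d - x_2^d = -1 - y/x + x^2/y$; meanwhile $t^d = x_0^d x_1^d x_2^d = (y/x)\bigl(-1 - y/x + x^2/y\bigr)\bigl(-x^2/y\bigr)$. Expanding and clearing denominators should yield precisely $y^2 + xy - t^d y = x^3$ after simplification; this algebraic identity is the computational heart of the lemma and I would carry it out carefully, tracking signs.

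The step I expect to be the main obstacle is verifying that $K^{T_E}$ is \emph{exactly} $\FF_{q^2}(t, x, y)$ and not a strictly larger field — i.e.\ that these invariants generate the full invariant subfield, equivalently that $[\FF_{q^2}(x_0,x_1,x_2) : \FF_{q^2}(t,x,y)] = d^2 = |T_E|$. One clean way is to show the extension $\FF_{q^2}(x_0,x_1,x_2)/\FF_{q^2}(t,x,y)$ is Galois with group $T_E$: the action is clearly faithful on $K$ (a generic point has trivial stabilizer since the $x_i$ are algebraically free up to the Fermat relation), $T_E$ fixes $t,x,y$, and then $[K : K^{T_E}] = |T_E| = d^2$ by Artin, so it remains only to see $[K : \FF_{q^2}(t,x,y)] \leq d^2$, which follows from the tower $\FF_{q^2}(t,x,y) \subseteq \FF_{q^2}(t,x,y, x_0) \subseteq K$: the first step has degree dividing $d$ since $x_0^d = y/x \in \FF_{q^2}(t,x,y)$, and the second has degree dividing $d$ since (having fixed $x_0$, hence $t/x_0 = x_1 x_2$ and $x_1^d + x_2^d$ determined) $x_1$ satisfies $x_1 \cdot (\text{something}) $ together with $x_1^d$ known. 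Alternatively — and perhaps more transparently — one simply invokes that $\mathcal{E}_d$ was \emph{defined} (see the Notation section and \cite[\S3.2]{Ulm02}) as the elliptic surface attached to $E_d$, reducing the claim to the birational identification of $\widehat{\mathcal{F}}_d/T_E$ with a model of $E_d$ over $\FF_{q^2}(t)$, which is exactly the explicit computation of the previous paragraph; this is the route I would take in the write-up, relegating the degree bookkeeping to a remark.
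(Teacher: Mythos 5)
Your proposal is correct and takes essentially the same route as the paper: both identify the $T_E$-invariant function field with $\FF_{q^2}(t,x,y)$ via the formulas of (\ref{map to affine surface}) and conclude by the two-sided degree count $[L:\FF_{q^2}(t,x,y)]\leq d^2=|T_E|\leq [L:L^{T_E}]$, the paper bounding the degree with the $d^2$ module generators $x_0^ix_2^j$ where you use the tower adjoining $x_0$ and then $x_1$ --- the same bookkeeping. One cosmetic slip: the third exceptional divisor in Lemma~\ref{three blow-ups} dominates $\PP^1$ rather than lying over $[1:0]$, but your reduction is still valid because blow-ups at points do not change the function field, which is all that the birational class of the generic fiber depends on.
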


\begin{proof}
Write coordinates in $\AA^3_{\FF_{q^2}}$ as $(x,y,t)$ and consider the affine surface $X$ in $\AA^3_{\FF_{q^2}}$ determined by $y^2 + xy - t^dy = x^3$.  Define a map to $\PP^1$ by sending $(x,y,t) \mapsto [t:1]$.  It's clear that the generic fiber of this map is $E_d$, so it suffices to give a birational map from $\widehat{\mathcal{F}}_d/T_E$ to this affine surface which is compatible with the two maps to $\PP^1$.  

We first give a map from the affine open subset of $\mathcal{F}_d$ determined by $x_0x_2x_3 \neq 0$ to the affine open set in $X$ determined by $xy \neq 0$.  In terms of coordinate rings, the map $\FF_{q^2}[x,y,t,\frac{1}{xy}]/(y^2 + xy - t^dy - x^3) \rightarrow \FF_{q^2}[x_0,x_1,x_2,\frac{1}{x_0x_2}]/(x_0^d + x_1^d + x_2^d + 1)$ is defined using the same formulas as in (\ref{map to affine surface}).  This is a finite morphism of rings with module generators given by the $d^2$ elements $x_0^i x_2^j$ for $0 \leq i,j \leq d-1$ and its image lies in the subring of elements fixed by $T_E$.  In terms of fraction fields, we find a composition $K \rightarrow L^{T_E} \rightarrow L$ where $[L:K] \leq d^2$ but on the other hand $[L: L^{T_E}] \geq d^2$.  We deduce that $K \rightarrow L^{T_E}$ is an isomorphism, which completes the proof. 
\end{proof}

\begin{lem} \label{where are the blow-ups}
The elliptic surface $\mathcal{E}_d$ associated to $E_d$ can be obtained from $\pi: \widehat{\mathcal{F}}_d/T_E \rightarrow \PP^1$ by a series of blow-ups and blow-downs.  The curves to be blown down all live in fibers of $\pi$, as do the blow-ups.
\end{lem}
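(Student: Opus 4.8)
The plan is to use the general structure theory of relatively minimal elliptic surfaces together with the fact, established in Lemma~\ref{generic fiber lemma}, that the generic fiber of $\pi: \widehat{\mathcal{F}}_d/T_E \to \PP^1$ is $E_d$. First I would resolve the singularities of the quotient surface $\widehat{\mathcal{F}}_d/T_E$. The action of $T_E$ on $\widehat{\mathcal{F}}_d$ has a controlled fixed-point locus: away from the $9d$ exceptional divisors, the fixed points of nontrivial elements of $T_E \cong \mu_d^2$ acting coordinatewise on $\mathcal{F}_d$ lie where at least two of the coordinates $x_0, x_1, x_2, x_3$ vanish, and these loci lie over finitely many points of $\PP^1$ (indeed over $[0:1]$ and $[1:0]$, together with the images of the blow-up centers). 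Hence any surface singularities introduced by the quotient, as well as any singularities of the total space of $\widehat{\mathcal{F}}_d/T_E$ lying on the exceptional configuration, are supported in finitely many fibers of $\pi$. Resolving these by blow-ups produces a smooth projective surface $Y$ with a morphism $Y \to \PP^1$ whose generic fiber is still $E_d$; the blow-ups are all fibral, as claimed.

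Next I would pass from $Y$ to a relatively minimal model. Any $(-1)$-curve contained in a fiber of $Y \to \PP^1$ can be blown down without disturbing the generic fiber or the map to $\PP^1$; iterating (this terminates, since the Picard number drops at each step) yields a relatively minimal elliptic surface $Y' \to \PP^1$ with generic fiber $E_d$, and all the intermediate blow-downs are fibral. The key input now is uniqueness: a relatively minimal elliptic surface is determined up to isomorphism over $\PP^1$ by its generic fiber (equivalently, by the associated elliptic curve over $\FF_{q^2}(t)$ together with the chosen section); see \cite[Lecture~3]{Ulm11}. Since $\mathcal{E}_d$ is by definition the relatively minimal elliptic surface attached to $E_d$ over $\FF_{q^2}(t)$, we get $Y' \cong \mathcal{E}_d$ over $\PP^1$. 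Composing, $\mathcal{E}_d$ is obtained from $\widehat{\mathcal{F}}_d/T_E$ by a sequence of fibral blow-ups (to reach $Y$) followed by fibral blow-downs (to reach $Y' = \mathcal{E}_d$).

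The main obstacle is the first step: giving a clean argument that the singularities of $\widehat{\mathcal{F}}_d/T_E$ — both those coming from fixed points of $T_E$ on $\mathcal{F}_d$ and any that arise along the exceptional divisors introduced in forming $\widehat{\mathcal{F}}_d$ — are all confined to finitely many fibers of $\pi$, so that their resolution is fibral. This is where one must actually use the coordinate description: a point of $\mathcal{F}_d$ with at most one vanishing coordinate has trivial $T_E$-stabilizer (since $T_E$ acts by independent scalings on $x_0, x_1, x_2$ modulo the overall relation), so $\widehat{\mathcal{F}}_d/T_E$ is smooth there, and the $T_E$-fixed loci where two or more coordinates vanish map into $\{[0:1],[1:0]\} \subset \PP^1$ under $\pi$. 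A careful but routine check of the three blow-up charts of Lemma~\ref{three blow-ups} shows the $T_E$-action on the exceptional locus is likewise either free or has fixed points over $[1:0]$ only. Granting this, the rest of the argument is the soft surface-theory package described above.
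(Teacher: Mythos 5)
Your proposal is correct and follows the same skeleton as the paper's proof: start from Lemma~\ref{generic fiber lemma}, resolve the quotient by blow-ups, and reach $\mathcal{E}_d$ by contracting $(-1)$-curves, identifying the result via relative minimality. Two differences are worth recording. First, the step you single out as the main obstacle --- confirming via a $T_E$-fixed-point analysis that the resolution of $\widehat{\mathcal{F}}_d/T_E$ only blows up in fibers --- is not needed: the quotient is a normal surface, so its singular locus is a finite set of points, and since $\pi$ is already a \emph{morphism} on $\widehat{\mathcal{F}}_d/T_E$ (Definition~\ref{T def}), every exceptional curve of a resolution contracts to one of those points and hence maps to a single point of $\PP^1$; fibral-ness of the blow-ups is automatic, independent of where the fixed points sit. (Incidentally, your fixed-locus claim is not quite right when $3 \mid d$: the class $[1:1:1:\omega]$ with $\omega$ a primitive cube root of unity satisfies $t_0t_1t_2 = 1 = \omega^3 = t_3^3$, so it lies in $T_E$ and fixes the entire curve $x_3 = 0$ pointwise, not only points with two vanishing coordinates; this is harmless, since that curve lies in the fiber over $[1:0]$, but it shows the ``routine check'' is doing no real work.) Second, for the blow-downs the paper argues in the opposite order: by minimality of $\mathcal{E}_d$, the morphism $S \rightarrow \PP^1$ from the resolved surface factors through $\mathcal{E}_d$, the resulting birational morphism $S \rightarrow \mathcal{E}_d$ is a composite of contractions of $(-1)$-curves by \cite[Theorem~II.11]{Bea96}, and these curves must be fibral since otherwise $\mathcal{E}_d \rightarrow \PP^1$ would fail to be a morphism; your variant --- contract fibral $(-1)$-curves until relatively minimal, then invoke uniqueness of the relatively minimal elliptic model with its section --- uses the same input (minimality of the Kodaira--N\'eron model) and is an acceptable substitute.
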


\begin{proof}
We have shown in Lemma~\ref{generic fiber lemma} that the generic fiber of $\widehat{\mathcal{F}}_d/T_E \rightarrow \PP^1$ is birational to $E_d$.  By the usual theory of surfaces, we may use a sequence of blow-ups to resolve $\widehat{\mathcal{F}_d}/T_E$ to a smooth surface over $\FF_{q^2}$.  Denote the result by $S$ and note that the composition $S \rightarrow \widehat{\mathcal{F}_d}/T_E \rightarrow \PP^1$ has generic fiber isomorphic to $E_d$.  

By the minimality of $\E_d$, the morphism $S \rightarrow \PP^1$ factors as a composition $\widehat{\mathcal{F}_d}/T_E \rightarrow \E_d \rightarrow \PP^1$.  As the morphism $\widehat{\mathcal{F}_d}/T_E \rightarrow \E_d$ is a birational morphism of surfaces, it is obtained as a sequence of blow-downs of divisors of self-intersection $-1$ by \cite[Theorem~II.11]{Bea96}.  The divisors that are blown-down are necessarily fibral, as otherwise the resulting map $\E_d \rightarrow \PP^1$ would not be a morphism.     
\end{proof}

\begin{lem} \label{NS for quotient}
The quotient map $\rho: \widehat{\mathcal{F}}_d \rightarrow \widehat{\mathcal{F}}_d/T_E$ induces a map on N\'eron-Severi groups, the cokernel of which is annihilated by $d^2$.  
\end{lem}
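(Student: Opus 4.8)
The plan is to exploit the fact that $\rho: \widehat{\mathcal{F}}_d \to \widehat{\mathcal{F}}_d/T_E$ is a quotient by a finite group $T_E$ of order $d^2$ acting on a smooth projective surface. The standard tool here is a transfer (or trace) map: averaging over the group action produces a map going the other way, and composing with $\rho^*$ multiplies by $|T_E| = d^2$.

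First I would recall that since $T_E$ acts on $\widehat{\mathcal{F}}_d$, it acts on the free abelian group $\Div(\widehat{\mathcal{F}}_d)$ of divisors, and the pullback $\rho^*: \Div(\widehat{\mathcal{F}}_d/T_E) \to \Div(\widehat{\mathcal{F}}_d)$ has image landing in the $T_E$-invariant divisors (at least after clearing denominators coming from ramification — one must be slightly careful, since pullback of a divisor through which the quotient is ramified acquires multiplicities). Rather than work with $\rho^*$ directly, I would work with $\rho_*$ (pushforward of divisors/cycles), which is always defined on the level of cycles, and then build a section up to $d^2$-torsion. Concretely, for a divisor $D$ on $\widehat{\mathcal{F}}_d$, the pushforward $\rho_* D$ on the quotient satisfies $\rho^* \rho_* D = \sum_{g \in T_E} g^* D$ as divisors, away from the ramification locus; and more robustly, at the level of N\'eron--Severi classes one has $\rho_* \rho^* = (\deg \rho) \cdot \mathrm{id} = d^2 \cdot \mathrm{id}$ by the projection formula. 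So given any class $\bar{c} \in \NS(\widehat{\mathcal{F}}_d/T_E)$, we can pull it back to get a class $c \in \NS(\widehat{\mathcal{F}}_d)$, and then $\rho_*(c) = d^2 \bar{c}$ lies in the image of the induced map $\rho_*$; but we want the cokernel of the map $\rho$ \emph{induces} on $\NS$, which (reading the statement) is the pushforward map $\NS(\widehat{\mathcal{F}}_d) \to \NS(\widehat{\mathcal{F}}_d/T_E)$. Wait — I should be careful about which direction: for a quotient map the natural induced map on $\NS$ in the covariant direction is $\rho_*$. The identity $\rho_* \rho^* = d^2$ then shows $d^2 \bar c$ is in the image of $\rho_*$ for every $\bar c$, i.e., the cokernel of $\rho_*: \NS(\widehat{\mathcal{F}}_d) \to \NS(\widehat{\mathcal{F}}_d/T_E)$ is annihilated by $d^2$, which is exactly the claim.

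The key steps, in order: (1) Observe $\rho$ is a finite surjective morphism of smooth projective surfaces of degree $|T_E| = d^2$ (smoothness of $\widehat{\mathcal{F}}_d$ follows since Fermat surfaces are smooth and blow-ups preserve smoothness; the quotient $\widehat{\mathcal{F}}_d/T_E$ may have quotient singularities, but we can work on a resolution or simply work with Weil divisor classes — I would note that $\NS$ here is understood via $\Div$ as in the paper's conventions, so we can stay with Weil divisors). (2) Define $\rho^*$ on N\'eron--Severi classes — for a Weil divisor class this can be defined since $\rho$ is finite and flat over the open locus where the quotient is nice, or one uses the refined pullback $\frac{1}{d^2}\rho^*$ composed with the obvious map; cleanest is to define $\rho^*(\bar D)$ as the unique class with $\rho_*\rho^*(\bar D) = d^2 \bar D$. (3) Invoke the projection formula / the standard fact $\rho_* \rho^* = (\deg \rho)\cdot \mathrm{id}$ (e.g. Fulton, \emph{Intersection Theory}, or for surfaces this is elementary). (4) Conclude: for any $\bar c \in \NS(\widehat{\mathcal{F}}_d/T_E)$, the element $d^2 \bar c = \rho_*(\rho^* \bar c)$ lies in the image of $\rho_*$, so the cokernel is killed by $d^2$.

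The main obstacle — really the only subtlety — is that $\widehat{\mathcal{F}}_d/T_E$ is a priori singular, so one must be careful about what ``$\NS$'' and ``pullback'' mean and whether the projection formula applies. The clean resolution is to note that $T_E$ acts with only finitely many fixed points (or at worst the quotient has isolated quotient singularities), so $\rho$ is flat in codimension one, pullback of Weil divisors is well-defined, and the projection formula $\rho_*\rho^* = d^2$ holds for divisor classes; alternatively, pass to a resolution $\widetilde{Y} \to \widehat{\mathcal{F}}_d/T_E$ and a resolution of the fiber product, reducing to the smooth case where everything is standard. Either way this is routine once set up; I would state it via the trace/transfer argument and cite Fulton for the projection formula to keep it short.
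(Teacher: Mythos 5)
Your proposal is correct and takes essentially the same route as the paper: the paper simply observes that $\rho$ is finite and surjective of degree $d^2$ and cites \cite[Exercise~7.2.2(b)]{Liu02} for exactly the transfer identity $\rho_*\rho^* = (\deg\rho)\cdot\mathrm{id}$ on divisor classes that you prove directly, giving that $d^2$ kills the cokernel of the induced (pushforward) map on N\'eron--Severi groups. (A minor caveat: when $3 \mid d$ some elements of $T_E$ fix a curve pointwise, so ``only finitely many fixed points'' is not quite accurate, but your fallback via Weil divisors on the normal quotient, or via a resolution, covers this.)
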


\begin{proof}
The quotient map $\rho$ is finite and surjective of degree $d^2$.  The result now follows from \cite[Exercise~7.2.2(b)]{Liu02}.
\end{proof}

We will work with the four projective surfaces $\mathcal{F}_d$, $\widehat{\mathcal{F}}_d$, $\widehat{\mathcal{F}}_d/T_E$, and $\mathcal{E}_d$, each of which comes with a rational map to $\PP^1$ which is considered part of the data.  In fact, we will work primarily with open subsets of these surfaces, so as to avoid singularities and points of indeterminacy.  Some of the relationships among these objects are illustrated in Figure~\ref{projective figure}.  The definitions are given in Definition~\ref{open defs}.

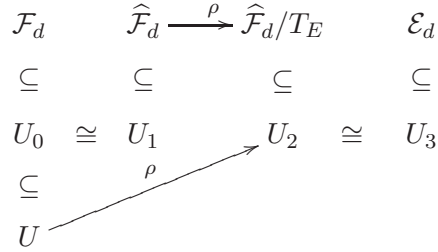
\begin{figure}[hnbt]
\[
\xymatrix{\mathcal{F}_d & \widehat{\mathcal{F}}_d \ar^{\rho}[r] & \widehat{\mathcal{F}}_d/T_E & \mathcal{E}_d \\
U_0 \ar@{}[u]|-*[@]{\subseteq} \ar@{}[r]|-*[@]{\cong} & U_1 \ar@{}[u]|-*[@]{\subseteq} & U_2 \ar@{}[u]|-*[@]{\subseteq} \ar@{}[r]|-*[@]{\cong} & U_3 \ar@{}[u]|-*[@]{\subseteq} \\
U \ar@{}[u]|-*[@]{\subseteq} \ar^{\rho}[urr]
}
\]
\caption{The relevant surfaces and open subsets.} \label{projective figure}
\end{figure}

\begin{defn} \label{open defs}
See Figure~\ref{projective figure}.
\begin{enumerate}
\item Define $U_0 \subseteq \mathcal{F}_d$ as the open set obtained by removing the $3d$ points where $x_3$ and one other coordinate are equal to zero.  
\item Define $U_1 \subseteq \widehat{\mathcal{F}}_d$ as the inverse image of $U_0$ under the map $\widehat{\mathcal{F}}_d \rightarrow \mathcal{F}_d$.  
\item As a preliminary step, define $U_2' \subseteq \widehat{\mathcal{F}}_d/T_E$ as the largest open subset obtained by subtracting certain fibral divisors in such a way that $U_2'$ is isomorphic to an open subset $U_3' \subseteq \mathcal{E}_d$.  This is possible by Lemma~\ref{where are the blow-ups}.  Then define $U_2$ to be $U_2' \cap \rho(U_1)$. 
\item Define $U_3 \subseteq \mathcal{E}_d$ to be the image of $U_2$ in $\mathcal{E}_d$ under the embedding $U_2' \hookrightarrow \mathcal{E}_d$ described above.
\item Finally, define $U \subseteq \mathcal{F}_d$ to be $U_0 \cap \rho^{-1}(U_2)$.
\end{enumerate}
\end{defn}

\begin{defn}[{\cite[Lecture~3, Section~5]{Ulm11}}] \label{L1 defn}
Given an irreducible curve $D$ on $\mathcal{E}_d$, one puts 
\[
D.E_d := D \times_{\mathcal{E}_d} E_d.
\]
This induces a map $\Div \mathcal{E}_d \rightarrow \Div E_d$.  One defines $L^1 (\Div \mathcal{E}_d)$ to be the subgroup of $\Div \mathcal{E}_d$ consisting of divisors whose images in $\Div E_d$ have degree 0.  One defines $L^1 (\NS(\mathcal{E}_d))$ as the image of $L^1 (\Div \mathcal{E}_d)$ in $\NS(\mathcal{E}_d)$.  
\end{defn}

Next we consider relations among the N\'eron-Severi groups of these spaces, as illustrated in Figure~\ref{NS figure}.  Recall that for $U \subseteq X$ an open subset, the notation $\NS(U)$ was explained in the introduction.

\begin{figure}[hnbt]
\[
\xymatrix{ 
L^1(\NS(\mathcal{F}_d)^{T_E}) & L^1(\NS(\mathcal{F}_d)) &L^1(\NS(\widehat{\mathcal{F}}_d))&L^1(\NS(\mathcal{E}_d)) \ar@{->>}[r] &E_d(\FF_{q^2}(t)) \\
L^1(\NS(U)^{T_E}) \ar@{}[r]|-*[@]{\subseteq} \ar@{}[u]|-*[@]{\subseteq} &L^1(\NS(U)) \ar@{}[u]|-*[@]{\subseteq} \ar[r] &L^1(\NS(\rho^{-1}(U_2'))) \ar@{}[u]|-*[@]{\subseteq} \ar^{\rho}[r]&L^1(\NS(U_3')) \ar@{}[u]|-*[@]{\subseteq} \ar[ur] 
}
\]
\caption{Some of the connections between our surfaces and rational points on $E_d$.} \label{NS figure}
\end{figure}

\begin{lem} \label{some surjectivities lemma} See Figure~\ref{NS figure}.  
\begin{enumerate}
\item The top-right map is the ``intersect with the generic fiber'' map as in Definition~\ref{L1 defn}.  It is surjective by \cite[Lecture~3, Theorem~5.1]{Ulm11}.
\item The diagonal map is surjective because, by Lemma~\ref{where are the blow-ups}, we have only removed fibral divisors.  
\item The cokernel of $\rho: \NS(\rho^{-1}(U_2')) \rightarrow \NS(U_3')$ is annihilated by $d^2$ as in Lemma~\ref{NS for quotient}.  Hence the same is true for $\rho: L^1(\NS(\rho^{-1}(U_2'))) \rightarrow L^1(\NS(U_3'))$.  
\item For any $t \in T_E$, two elements $v$ and $tv$ in $L^1(\NS(U))$ have the same image in $E_d(\FF_{q^2})$.  If we write $P$ for that image, then $d^2 P$ is the image of $\sum_{t \in T_E} tv$, and this latter element is in $L^1(NS(U)^{T_E})$.  
\end{enumerate}
\end{lem}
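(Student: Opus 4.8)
The plan is to dispatch parts~(1)--(3) by quoting the results already cited in the statement, and to concentrate on part~(4). Throughout I would use the identifications of Definition~\ref{open defs} to regard $U$ as an open subset of $\widehat{\mathcal{F}}_d$ lying inside $\rho^{-1}(U_2')$, so that the map $L^1(\NS(U)) \to E_d(\FF_{q^2}(t))$ of Figure~\ref{NS figure} is the composite of group homomorphisms
\[
L^1(\NS(U)) \longrightarrow L^1(\NS(\rho^{-1}(U_2'))) \xrightarrow{\ \rho_*\ } L^1(\NS(U_3')) \longrightarrow E_d(\FF_{q^2}(t)).
\]
The first step would be to observe that $U$ is stable under the $T_E$-action on $\widehat{\mathcal{F}}_d$: the $3d$ points removed from $\mathcal{F}_d$ to form $U_0$ are permuted by $T_E$ (which scales coordinates and so preserves each coordinate hyperplane), while $\rho^{-1}(U_2)$ is $T_E$-stable because $\rho$ is the quotient by $T_E$. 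Hence $T_E$ acts on $\Div U$, on $\NS(U)$, and compatibly on $\NS(\rho^{-1}(U_2'))$, the first arrow above being $T_E$-equivariant since it is induced by the $T_E$-equivariant blow-up and open inclusion.

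Next I would exploit the identity $\rho \circ t = \rho$ on $U$, valid for every $t \in T_E$ because $\rho$ is the quotient morphism; passing to N\'eron--Severi groups this gives $\rho_* \circ t_* = \rho_*$. Since $\mathcal{E}_d$, and hence $U_3'$, carries no $T_E$-action, it follows that $v$ and $t_* v$ have the same image in $\NS(U_3')$, hence the same image in $E_d(\FF_{q^2}(t))$, for every $v \in \NS(U)$ and every $t \in T_E$. In particular $v \in L^1(\NS(U))$ if and only if $t_* v \in L^1(\NS(U))$, so $L^1(\NS(U))$ is a $T_E$-stable subgroup, and writing $P$ for the common image of $v$ and $tv$ proves the first assertion of~(4).

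To finish: $\sum_{t \in T_E} tv$ lies in $L^1(\NS(U))$ by $T_E$-stability, and since the map to $E_d(\FF_{q^2}(t))$ is a homomorphism and $|T_E| = d^2$ by Definition~\ref{T def}, the image of $\sum_{t \in T_E} tv$ is $\sum_{t \in T_E} P = d^2 P$. Finally, for each $s \in T_E$ one has $s \cdot \sum_{t \in T_E} tv = \sum_{t \in T_E}(st)v = \sum_{t' \in T_E} t' v$ by reindexing, so $\sum_{t \in T_E} tv \in \NS(U)^{T_E}$; combined with its membership in $L^1(\NS(U))$ and the identity $L^1(\NS(U)^{T_E}) = \NS(U)^{T_E} \cap L^1(\NS(U))$ (immediate from the convention for $L^1(-)$ in the introduction), this places $\sum_{t \in T_E} tv$ in $L^1(\NS(U)^{T_E})$, as required.

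The argument is essentially formal, so I expect the only real obstacle to be bookkeeping: making sure that $U$, viewed inside $\widehat{\mathcal{F}}_d$, genuinely lies over $U_2'$ and that every arrow in Figure~\ref{NS figure} is $T_E$-equivariant (or $T_E$-invariant when its target sits below the quotient $\rho$), and pinning down the inputs $|T_E| = d^2$ and $\rho_* \circ t_* = \rho_*$ --- the latter reducing to the remark that $t \colon D \to t(D)$ is an isomorphism over $\rho(D) = \rho(t(D))$, so it preserves the mapping degrees entering the definition of $\rho_*$.
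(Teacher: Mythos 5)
Your proposal is correct and follows essentially the same route as the paper, which gives only the inline justifications in the lemma itself: parts (1)--(3) by the cited results, and part (4) by the observation that the map to $E_d(\FF_{q^2}(t))$ factors through $\rho$ (so $\rho_*\circ t_*=\rho_*$ makes $v$ and $tv$ have the same image), together with $|T_E|=d^2$ and the reindexing argument showing $\sum_{t\in T_E}tv$ is $T_E$-invariant and lies in $L^1$. Your added bookkeeping ($T_E$-stability of $U$ and of $L^1(\NS(U))$) is exactly the implicit content of the paper's statement.
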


\begin{prop} \label{surjective from L1}
Figure~\ref{NS figure} describes maps from both $L^1(\NS(U))$ and $L^1(\NS(\rho^{-1}(U'_2)))$ to $E_d(\FF_{q^2})$.  
The images of these two maps are equal.  
\end{prop}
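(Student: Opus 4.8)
\section*{Proof proposal}

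The two maps in question are the composites one reads off from Figure~\ref{NS figure}: out of $L^1(\NS(\rho^{-1}(U_2')))$, apply the map $\rho$ to $L^1(\NS(U_3'))$, then the diagonal map to $L^1(\NS(\mathcal{E}_d))$ (surjective by Lemma~\ref{some surjectivities lemma}(2)), then the surjection to $E_d(\FF_{q^2}(t))$ (Lemma~\ref{some surjectivities lemma}(1)); out of $L^1(\NS(U))$, first apply the horizontal map into $L^1(\NS(\rho^{-1}(U_2')))$ and then continue as before. In particular the map out of $L^1(\NS(U))$ factors through the map out of $L^1(\NS(\rho^{-1}(U_2')))$, so one containment of images is free; the content is the reverse containment, and I write $B$ for the image of $L^1(\NS(U))$ in $E_d(\FF_{q^2}(t))$.

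The plan is to describe the difference $\rho^{-1}(U_2')\setminus U$ and then track it to $E_d(\FF_{q^2}(t))$. Since $T_E$ permutes the $3d$ points excised in Definition~\ref{open defs}(1), the open set $U_1$ is $T_E$-stable, and a short chase through Definition~\ref{open defs} (using $U_0\cong U_1$) gives $U = U_1 \cap \rho^{-1}(U_2')$, so that $\rho^{-1}(U_2')\setminus U = \rho^{-1}(U_2') \cap (\widehat{\mathcal{F}}_d \setminus U_1)$ is exactly the part of the exceptional locus of $\widehat{\mathcal{F}}_d \to \mathcal{F}_d$ that meets $\rho^{-1}(U_2')$. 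By Lemma~\ref{three blow-ups} and Remark~\ref{blow-up remark}, over each of the $3d$ points this locus is a union of divisors each of which is either fibral or maps isomorphically to $\PP^1$. Consequently $\NS(\rho^{-1}(U_2'))$ is generated by the image of $\NS(U)$ together with the classes of these fibral divisors and the (at most $3d$) classes $[E]$ of the divisors mapping isomorphically to $\PP^1$; since intersecting with the generic fibre is compatible with all of this, $L^1(\NS(\rho^{-1}(U_2')))$ is generated by the image of $L^1(\NS(U))$ together with the fibral classes and with those $\ZZ$-combinations of the $[E]$ that meet the generic fibre in degree $0$.

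It then suffices to show each such generator maps into $B$. A fibral divisor on $\widehat{\mathcal{F}}_d$ maps, via $\rho$ and the fibral blow-ups and blow-downs of Lemma~\ref{where are the blow-ups}, to a fibral divisor on $\mathcal{E}_d$; such a divisor meets the generic fibre trivially, so its class dies in $E_d(\FF_{q^2}(t))$ by Definition~\ref{L1 defn}. A divisor $E$ mapping isomorphically to $\PP^1$ becomes a section of $\mathcal{E}_d$, hence defines a point $Q \in E_d(\FF_{q^2}(t))$; because $T_E$ permutes the $3d$ base points and hence these divisors, Lemma~\ref{some surjectivities lemma}(4) shows that $\sum_{t \in T_E} t\cdot[E]$ lies in $L^1(\NS(U)^{T_E})$ and maps to $d^2 Q$, so at least $d^2 Q \in B$. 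To upgrade this to $Q \in B$, I would identify $Q$ explicitly by pushing the three blow-ups of Lemma~\ref{three blow-ups} through the formula (\ref{map to affine surface}) (concretely, $Q$ at a general parameter is the limiting value of $\phi$ along the fibre of $\pi$ as one approaches the corresponding base point). I expect each such $Q$ to be a torsion point --- plausibly lying in the subgroup $\langle (0,0)\rangle$ generated by the $3$-torsion point visible in (\ref{the elliptic curve}) --- and in any case to lie in $B$. Granting this, every generator of $L^1(\NS(\rho^{-1}(U_2')))$ maps into $B$, and the two images coincide.

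The main obstacle is precisely the treatment of the divisors $E$ mapping isomorphically to $\PP^1$: pinning down the sections of $\mathcal{E}_d$ they cut out and verifying that the corresponding points of $E_d(\FF_{q^2}(t))$ already lie in $B$. Everything else amounts to unwinding Definition~\ref{open defs} and applying Lemmas~\ref{three blow-ups}, \ref{where are the blow-ups}, and \ref{some surjectivities lemma}.
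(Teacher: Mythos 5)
Your reduction of the problem is the same as the paper's: the difference between $L^1(\NS(U))$ and $L^1(\NS(\rho^{-1}(U_2')))$ is accounted for by the $3\cdot 3d$ exceptional divisors of $\widehat{\mathcal{F}}_d\to\mathcal{F}_d$, the first two at each point are fibral and harmless, and everything hinges on the third, horizontal one. But precisely at that point your argument stops being a proof. Your first device does not work as stated: Lemma~\ref{some surjectivities lemma}(4) applies to elements of $L^1(\NS(U))$, whereas $[E]$ and all of its $T_E$-translates are classes of exceptional divisors, i.e.\ of prime divisors contained in $\rho^{-1}(U_2')\setminus U$, so there is no reason (prior to proving the proposition!) that $\sum_{t\in T_E}t\cdot[E]$ lies in $\NS(U)^{T_E}$ at all; thus even the weaker conclusion $d^2Q\in B$ is not established. (It would in any case only suffice after inverting $d$, as in Corollary~\ref{cor fixed to curve}, not for the exact equality of images the proposition asserts.) Your second device is explicitly conjectural: ``I expect each such $Q$ to be a torsion point \dots and in any case to lie in $B$'' is the entire content of the proposition at that point, and even if $Q$ were torsion this would not place it in $B$, since $B$ is only the image of $L^1(\NS(U))$, not of all of $L^1(\NS(\mathcal{E}_d))$.

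The paper closes exactly this gap with an algebraic-equivalence argument that your proposal is missing. Fixing the point $x=[0:\zeta_{2d}:1:0]$, one considers the divisor on $\widehat{\mathcal{F}}_d\times_{\Spec\FF_{q^2}}\PP^1$ cut out by $(x_1-\zeta_{2d}x_2)t_0+(x_0+x_1+x_2)t_1$: the member of this pencil over $[1:0]$ contains all three exceptional divisors above $x$, while the member over $[0:1]$ contains none. Since the two members are algebraically equivalent, the class of the third exceptional divisor in $\NS(\rho^{-1}(U_2'))$ is a linear combination of the two fibral exceptional classes and of classes coming from $\NS(U)$; hence it contributes nothing new to the image in $E_d(\FF_{q^2}(t))$, with no need to identify the section $Q$ or its order. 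Some such explicit relation (or an equivalent computation identifying $Q$ and exhibiting it in $B$) is what your proof still needs; as written, the key step is assumed rather than proved.
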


\begin{proof}
Recall that the rational map $\pi: \mathcal{F}_d \rightarrow \PP^1$ has $3d$ points of indeterminacy, and that at each such point of indeterminacy, the map $\pi$ can be resolved to a regular map by making three consecutive blow-ups; see Lemma~\ref{three blow-ups}.  
The cokernel of $\Div U \rightarrow \Div(\rho^{-1} (U'_2))$ is generated by these $3 \cdot 3d$ exceptional divisors.  For each point of indeterminacy, the first two exceptional divisors can be disregarded, because they lie in fibers of $\pi$, again by Lemma~\ref{three blow-ups}.  

To show that the third exceptional divisor can also be disregarded, we use the definition of the N\'eron-Severi group to express the class of this third exceptional divisor in terms of classes of divisors we have already considered. Let $x:= [0:  \zeta_{2d}: 1 : 0]$; the argument at other points of indeterminacy is the same.  
Consider the divisor in $\widehat{\mathcal{F}}_d \times_{\Spec \FF_{q^2}} \PP^1_{\FF_{q^2}}$ cut out by 
\[
(x_1 - \zeta_{2d} x_2)t_0 + (x_0 + x_1 + x_2)t_1,
\]  
together with the natural projection to $\PP^1$.  The fiber above $[t_0 : t_1] = [1: 0]$, which is a curve in $\widehat{\mathcal{F}}_d$, contains three exceptional divisors: the three obtained from blowing-up at the point $x$.  On the other hand, the fiber above $[0: 1]$ contains no exceptional divisors.  This shows that, as elements of $\NS(\rho^{-1}(U'_2))$, the class of the third exceptional divisor is equal to a linear combination of classes of elements corresponding to the first two exceptional divisors and classes of elements in $\NS(U)$.   This completes the proof.
\end{proof}

\begin{cor} \label{cor fixed to curve}
The map
\begin{equation} \label{equation fixed to curve}
L^1(\NS(U)^{T_E}) \otimes_{\ZZ} \ZZ\left[ d^{-1}\right] \rightarrow E_d(\FF_{q^2}(t)) \otimes_{\ZZ} \ZZ\left[ d^{-1}\right].  
\end{equation}
arising from Figure~\ref{NS figure} is surjective.
\end{cor}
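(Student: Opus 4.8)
The proof will be a diagram chase through Figure~\ref{NS figure} followed by an averaging argument over the group $T_E$. Throughout, write $R := \ZZ[d^{-1}]$ and, for an abelian group $M$, abbreviate $M_R := M \otimes_{\ZZ} R$; recall $|T_E| = d^2$, which is a unit in $R$.

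The first step is to check that the composite $L^1(\NS(U))_R \to E_d(\FF_{q^2}(t))_R$ running along the bottom row of Figure~\ref{NS figure} is surjective. For this I would combine parts of Lemma~\ref{some surjectivities lemma}: by part~(3) the map $\rho \colon L^1(\NS(\rho^{-1}(U_2'))) \to L^1(\NS(U_3'))$ has cokernel killed by $d^2$, hence becomes surjective after $\otimes_{\ZZ} R$; by part~(2) the diagonal map $L^1(\NS(U_3')) \to E_d(\FF_{q^2}(t))$ is surjective. Composing, $L^1(\NS(\rho^{-1}(U_2')))_R \to E_d(\FF_{q^2}(t))_R$ is surjective, and then Proposition~\ref{surjective from L1}, which says $L^1(\NS(U))$ and $L^1(\NS(\rho^{-1}(U_2')))$ have the same image in $E_d(\FF_{q^2}(t))$, transfers this surjectivity to $L^1(\NS(U))_R \to E_d(\FF_{q^2}(t))_R$.

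The second step is to pass from $L^1(\NS(U))$ to its $T_E$-fixed subgroup using the averaging operator $A\colon v \mapsto \sum_{t \in T_E} t v$, which carries $L^1(\NS(U))$ into $L^1(\NS(U)^{T_E})$ (that it lands in the $L^1$-invariants, not merely in the invariants, is exactly the content of Lemma~\ref{some surjectivities lemma}(4)). Given $P \in E_d(\FF_{q^2}(t))_R$, the first step provides $v \in L^1(\NS(U))_R$ with image $P$. By Lemma~\ref{some surjectivities lemma}(4), extended $R$-linearly, the element $A(v)$ lies in $L^1(\NS(U)^{T_E})_R$ and its image in $E_d(\FF_{q^2}(t))_R$ is $d^2 P$. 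Hence $d^2 P$, and therefore also $P = d^{-2} \cdot (d^2 P)$ since $d^{-2} \in R$ and the image of $L^1(\NS(U)^{T_E})_R$ is an $R$-submodule, lies in the image of $L^1(\NS(U)^{T_E})_R \to E_d(\FF_{q^2}(t))_R$. This is the asserted surjectivity.

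I do not expect a serious obstacle: all the geometric input has been isolated in Lemma~\ref{some surjectivities lemma} and Proposition~\ref{surjective from L1}, and what remains is bookkeeping. The one point that deserves care is the role of the coefficients. Surjectivity of the bottom-row composite is available only after inverting $d$, because the relevant cokernels (Lemma~\ref{NS for quotient} and Lemma~\ref{some surjectivities lemma}(3)) are $d^2$-torsion, and the averaging step introduces a further factor $|T_E| = d^2$; it is precisely to absorb these two powers of $d$ that the statement is phrased over $\ZZ[d^{-1}]$ rather than $\ZZ$.
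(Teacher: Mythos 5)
Your proof is correct and follows the same route as the paper, whose own proof simply says to combine Lemma~\ref{some surjectivities lemma} and Proposition~\ref{surjective from L1}; your two steps (the bottom-row surjectivity after inverting $d$, then averaging over $T_E$ via part~(4)) are exactly the intended combination, spelled out in detail.
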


\begin{proof}
This follows from combining Lemma~\ref{some surjectivities lemma} and Proposition~\ref{surjective from L1}.
\end{proof}

\section{The $\mu_d$-module structure of $\NS(\mathcal{E}_d)$} \label{representation theory section}

To prove Theorems~\ref{main theorem 1} and \ref{main theorem 2}, we produce explicit elements of $\NS(\mathcal{E}_d)$.  The results of the previous section allow us to work in $\NS(\mathcal{F}_d)$, which (in our cases) has a well-understood structure.  

\begin{defn} \label{T* def}
Recall the groups $T_E \leq T$ from Definition~\ref{T def}. These groups act on $\mathcal{F}_d$ by acting coordinate-wise.  Let $\nu$ denote a fixed isomorphism of the $d$-th roots of unity in $\FF_{q^2}$ with the $d$-th roots of unity in $\CC$.  We identify the character group $T^*$ of $T$ with tuples $(i_0, i_1, i_2, i_3)$, $i_j \in \ZZ/d\ZZ$, such that $\sum i_j = 0$; the identification of tuples to characters is given by 
\[
(i_0, i_1, i_2, i_3) \leadsto \left([t_0:t_1:t_2:t_3] \mapsto \nu(t_0)^{i_0} \nu(t_1)^{i_1} \nu(t_2)^{i_2} \nu(t_3)^{i_3}\right).
\]  
\end{defn}

\begin{prop}[{\cite[\S3]{SSV10}}] \label{decomposition over cyclotomic}
Write $V := \NS(\mathcal{F}_d) \otimes_{\ZZ} \QQ(\zeta_d)$ and for each $\lambda \in T^*$, write $V_{\lambda}$ for the subrepresentation of vectors $v \in V$ such that $tv = \lambda(t)v$ for all $t \in T$.  Each $V_{\lambda}$ is either $0$ or $1$-dimensional, and it is $1$-dimensional precisely for the trivial character and for the characters associated to tuples with all entries non-zero.  
\end{prop}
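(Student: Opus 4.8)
The plan is to combine an elementary representation-theoretic splitting with two classical facts about the Fermat surface: the decomposition of its primitive second cohomology into one-dimensional $T$-eigenspaces, and the fact that in the present ``supersingular'' range $d = q+1$ the N\'eron--Severi group already accounts for all of $H^2$. Everything below is carried out in \cite[\S3]{SSV10}; I indicate the shape of the argument.

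First I would dispose of the purely formal part. The group $T$ is finite abelian of exponent dividing $d$, and $\QQ(\zeta_d)$ contains a full set of $d$-th roots of unity, so $V = \NS(\mathcal{F}_d) \otimes_{\ZZ} \QQ(\zeta_d)$ is a semisimple $\QQ(\zeta_d)[T]$-module and splits as $V = \bigoplus_{\lambda \in T^*} V_\lambda$. By Definition~\ref{T* def} the characters $\lambda$ are exactly the tuples $(i_0,i_1,i_2,i_3)$ with $i_j \in \ZZ/d\ZZ$ and $\sum_j i_j = 0$, so it remains only to compute $\dim_{\QQ(\zeta_d)} V_\lambda$ for each such $\lambda$.

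Next I would prove the upper bound $\dim_{\QQ(\zeta_d)} V_\lambda \leq 1$ by comparison with cohomology. Choose a prime $\ell \neq p$ with $\ell \equiv 1 \bmod d$, so that $\QQ(\zeta_d)$ embeds into $\QQ_\ell$; the cycle class map gives a $T$-equivariant injection $\NS(\mathcal{F}_d) \otimes_{\ZZ} \QQ_\ell \hookrightarrow H^2\big(\mathcal{F}_d \times_{\FF_{q^2}} \overline{\FF}_q,\, \QQ_\ell\big)(1)$. As a $\QQ_\ell[T]$-module the target is $H^2_{\mathrm{prim}} \oplus \QQ_\ell\cdot h$, where $h$ is the ($T$-invariant) hyperplane class, and the classical computation of the cohomology of Fermat varieties (Shioda--Katsura, Katz) identifies $H^2_{\mathrm{prim}}$ with $\bigoplus_{\mathbf a} V(\mathbf a)$, the sum running over tuples $\mathbf a = (a_0,\dots,a_3)$ with every $a_j$ nonzero in $\ZZ/d\ZZ$ and $\sum_j a_j = 0$, each $V(\mathbf a)$ one-dimensional with character $\mathbf a$. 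Hence every $T$-eigenspace of $H^2$ is at most one-dimensional, and is nonzero only for the trivial character and for the tuples with all entries nonzero; these bounds descend to the subspace $\NS(\mathcal{F}_d) \otimes_{\ZZ} \QQ_\ell$, hence to $V$.

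Finally I would upgrade the inequalities to equalities by a dimension count. Writing $q = p^f$, we have $p^f \equiv -1 \bmod d$ since $d = q+1$, so $\mathcal{F}_d$ is supersingular over $\FF_{q^2}$ in the sense of Tate and Shioda--Katsura, and therefore $\Rank \NS(\mathcal{F}_d) = b_2(\mathcal{F}_d) = d^3 - 4d^2 + 6d - 2$. By inclusion--exclusion this number is exactly $1$ plus the number of tuples in $(\ZZ/d\ZZ)^4$ with all entries nonzero and sum zero, so $\sum_\lambda \dim V_\lambda$ matches the total of the upper bounds from the previous step; this forces $\dim V_\lambda = 1$ precisely for the trivial character and the all-nonzero tuples, and $\dim V_\lambda = 0$ otherwise. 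The step I expect to be the main obstacle is the cohomological one: it rests on the classical eigenspace decomposition of the Fermat surface together with the Tate conjecture in the supersingular case, and it requires some care about the coefficient field ($\QQ(\zeta_d)$ versus $\QQ_\ell$) and the Tate twist; once those inputs are in hand the rest is bookkeeping.
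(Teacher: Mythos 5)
The paper offers no proof of this proposition: it is quoted from \cite[\S3]{SSV10}, and your sketch reconstructs the standard argument behind that citation (semisimple $T$-eigenspace splitting, the Shioda--Katsura/Katz one-dimensional eigenspace decomposition of $H^2$ of the Fermat surface, supersingularity in the range $d=q+1$, and the dimension count $b_2(\mathcal{F}_d)=d^3-4d^2+6d-2 = 1+\#\{\text{all-nonzero tuples summing to }0\}$), so it is correct and essentially the intended route. One refinement: you do not need the Tate conjecture, since $q\equiv -1 \bmod d$ makes $\mathcal{F}_d$ unirational by Shioda--Katsura, and Shioda's theorem that unirational surfaces satisfy $\rho=b_2$ gives $\Rank \NS(\mathcal{F}_d)=b_2$ unconditionally, after which your bookkeeping closes the argument.
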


\begin{rmk}
The Fermat surface $\mathcal{F}_d$ is acted on by the projective unitary group $PU(4,q^2)$.  See \cite{HM78} for some representation-theoretic properties of this action.  In the present paper, we have found it sufficient to study the action of the abelian subgroup $T \leq PU(4,q^2)$.
\end{rmk}


The vector space $L^1(\NS(U)^{T_E}) \otimes_{\ZZ} \QQ$ appearing in Section~\ref{to Ed section} is naturally a subspace of a certain space $W$ which we now define.  

\begin{defn} \label{W def}
Define 
\[
W := \left( \NS(\mathcal{F}_d) \otimes_{\ZZ} \QQ\right)^{T_E}.
\]
\end{defn}

Notice that a tuple $(i_0, i_1, i_2, i_3) \in T^*$ as in Definition~\ref{T* def} is trivial on $T_E$ if and only if $i_0 = i_1 = i_2$.  There is a natural inclusion of $W$ into $ \left( \NS(\mathcal{F}_d) \otimes_{\ZZ} \QQ(\zeta_d)\right)^{T_E}$ and we may identify $W \otimes_{\QQ} \QQ(\zeta_d)$ with $\oplus V_{\lambda}$, where $V_{\lambda}$ is as in Proposition~\ref{decomposition over cyclotomic} and where the direct sum is taken over characters $\lambda \in T^*$ corresponding to tuples of the form $(i,i,i,*)$.  Then by Proposition~\ref{decomposition over cyclotomic} we deduce the following.
\begin{cor} 
The $\QQ(\zeta_d)$-vector space $\left( \NS(\mathcal{F}_d) \otimes_{\ZZ} \QQ(\zeta_d)\right)^{T_E}$ is $d$ or $(d-2)$-dimensional, according as whether $d \equiv 2 \bmod 3$ or $d \equiv 0 \bmod 3$, respectively.  
\end{cor}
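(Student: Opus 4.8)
The plan is to reduce the statement to a finite count of characters $\lambda \in T^*$ for which the isotypic piece $V_\lambda$ of Proposition~\ref{decomposition over cyclotomic} is nonzero. First I would use the identification recorded just before the statement: under the isomorphism of $W \otimes_{\QQ} \QQ(\zeta_d)$ with $\left( \NS(\mathcal{F}_d) \otimes_{\ZZ} \QQ(\zeta_d)\right)^{T_E}$, this space is the direct sum of those $V_\lambda$ with $\lambda$ corresponding to a tuple of the shape $(i,i,i,j)$, $i,j \in \ZZ/d\ZZ$. Since every tuple in $T^*$ has coordinate sum zero, such a tuple is forced to equal $(i,i,i,-3i)$, hence is parametrized by the single index $i \in \ZZ/d\ZZ$. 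So the dimension we want is $\sum_{i \in \ZZ/d\ZZ} \dim V_{(i,i,i,-3i)}$.

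Next I would evaluate each summand using Proposition~\ref{decomposition over cyclotomic}: the piece $V_{(i,i,i,-3i)}$ is $1$-dimensional exactly when the tuple is the trivial character or has all four entries nonzero, and is $0$ otherwise. For $i = 0$ the tuple is $(0,0,0,0)$, contributing $1$. For $i \neq 0$ the first three entries are automatically nonzero, so the tuple has all entries nonzero precisely when $3i \not\equiv 0 \bmod d$; in the complementary case $3i \equiv 0$ with $i \neq 0$ the tuple is $(i,i,i,0)$, which has exactly one zero entry and therefore contributes $0$ by Proposition~\ref{decomposition over cyclotomic}. Since $3i \equiv 0 \bmod d$ has exactly $\gcd(3,d)$ solutions in $\ZZ/d\ZZ$, the total dimension equals $1 + \#\{i : i \neq 0,\ 3i \not\equiv 0 \bmod d\} = 1 + (d - \gcd(3,d))$.

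Finally I would specialize to $d = q+1$. Because $\mathrm{char}(\FF_q) = p \geq 5$ we have $3 \nmid q$, so $d \not\equiv 1 \bmod 3$; thus either $d \equiv 2 \bmod 3$, in which case $\gcd(3,d) = 1$ and the dimension is $d$, or $d \equiv 0 \bmod 3$, in which case $\gcd(3,d) = 3$ and the dimension is $d-2$, as claimed. I do not anticipate a genuine obstacle: the only points requiring care are bookkeeping, namely that the coordinate-sum constraint collapses a tuple $(i,i,i,*)$ to a single free parameter $i$, and that the configuration $(i,i,i,0)$ with $i \neq 0$ really gives $V_\lambda = 0$ rather than a $1$-dimensional piece — both immediate from Proposition~\ref{decomposition over cyclotomic}.
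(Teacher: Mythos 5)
Your proposal is correct and follows the same route the paper intends: identify $\left( \NS(\mathcal{F}_d) \otimes_{\ZZ} \QQ(\zeta_d)\right)^{T_E}$ with the sum of the pieces $V_\lambda$ over characters trivial on $T_E$, i.e.\ tuples $(i,i,i,-3i)$, and then count via Proposition~\ref{decomposition over cyclotomic}, noting that $(i,i,i,0)$ with $i\neq 0$ contributes nothing and that $d=q+1$ with $p\geq 5$ rules out $d\equiv 1 \bmod 3$. The paper leaves this count implicit ("we deduce"), and your write-up simply makes the same bookkeeping explicit, arriving at dimension $d+1-\gcd(3,d)$, which gives $d$ and $d-2$ in the two cases.
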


We now compute the rank of $E_d(\FF_{q^2}(t))$.  We will use the formula \cite[Lecture~3, (5.2)]{Ulm11}
\begin{equation} \label{shioda-tate}
\Rank E_d(\FF_{q^2}(t)) = \Rank \NS(\mathcal{E}_d) - 2 - \sum (f_v - 1).
\end{equation}
Here the sum is taken over the points $v \in \PP^1$ and $f_v$ is defined to be the number of irreducible components in the fiber above $v$.  
By Lemma~\ref{where are the blow-ups}, we may work with $\NS(\widehat{\mathcal{F}_d}/T_E)$ instead of $\mathcal{E}_d$.  

\begin{prop} \label{rank of the elliptic curve}
The elliptic curve $E_d$ has rank $q$ (respectively, $q-2$) according as whether $q \equiv 1 \bmod 3$ (respectively, $q \equiv 2 \bmod 3$).  
\end{prop}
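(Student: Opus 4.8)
The plan is to apply the Shioda--Tate formula (\ref{shioda-tate}) but to evaluate its right-hand side on $\widehat{\mathcal{F}}_d/T_E$ rather than directly on $\mathcal{E}_d$. This is legitimate: by Lemma~\ref{where are the blow-ups} one passes from $\widehat{\mathcal{F}}_d/T_E$ to $\mathcal{E}_d$ by first resolving the quotient singularities and then performing blow-ups and blow-downs, and all the curves involved are fibral --- indeed the $T_E$-fixed locus on $\widehat{\mathcal{F}}_d$ maps under $\pi$ into $\{0,\infty\}\subseteq\PP^1$, so the resolution contributes only fibral classes. Each fibral blow-up or blow-down changes $\Rank\NS$ and $\sum_v(f_v-1)$ by the same amount, so $\Rank\NS-2-\sum_v(f_v-1)$ is unchanged and equals $\Rank E_d(\FF_{q^2}(t))$. (I also use the standard fact that, because $q\equiv -1\bmod d$, the Néron--Severi group of $\mathcal{F}_d$ --- hence of every surface in Figure~\ref{projective figure} --- is supersingular and already defined over $\FF_{q^2}$, so the geometric rank agrees with the rank over $\FF_{q^2}(t)$.)

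First I would compute $\Rank\NS(\widehat{\mathcal{F}}_d/T_E)$. By Lemma~\ref{NS for quotient}, after tensoring with $\QQ$ the map $\rho$ induces an isomorphism $\NS(\widehat{\mathcal{F}}_d/T_E)\otimes\QQ\cong\big(\NS(\widehat{\mathcal{F}}_d)\otimes\QQ\big)^{T_E}$. The blow-up construction gives a $T_E$-equivariant splitting $\NS(\widehat{\mathcal{F}}_d)\otimes\QQ\cong\big(\NS(\mathcal{F}_d)\otimes\QQ\big)\oplus E$, where $E$ is spanned by the classes of the $9d$ exceptional divisors. Since forming $T_E$-invariants commutes with the flat extension $\QQ\hookrightarrow\QQ(\zeta_d)$, the Corollary preceding this Proposition gives $\dim_\QQ\big(\NS(\mathcal{F}_d)\otimes\QQ\big)^{T_E}=d$ or $d-2$ according as $d\equiv 2$ or $d\equiv 0\bmod 3$. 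As for $E$: the group $T_E$ permutes the $3d$ points of indeterminacy of $\pi$ in three orbits of size $d$, transitively within each, and --- since $\pi$ and its resolution are $T_E$-equivariant (Definition~\ref{T def}) --- the stabilizer (of order $d$) of such a point fixes each of the three exceptional divisor classes lying over it. Hence $E$ is, as a $T_E$-module, a direct sum of nine transitive permutation modules of size $d$, so $\dim_\QQ E^{T_E}=9$. Therefore $\Rank\NS(\widehat{\mathcal{F}}_d/T_E)$ equals $d+9$ if $d\equiv 2\bmod 3$ and $d+7$ if $d\equiv 0\bmod 3$.

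Next I would analyze the fibers of $\pi\colon\widehat{\mathcal{F}}_d/T_E\to\PP^1$. The Weierstrass model (\ref{the elliptic curve}) has $\Delta=-t^{3d}(1+27t^d)$ and $c_4=1+24t^d$, with $1+27t^d$ separable since $p\geq 5$; so the singular or reducible fibers of $\pi$ occur only over $t=0$, over $t=\infty$, and over the $d$ roots of $1+27t^d$, where the fiber is a single nodal curve and contributes $0$. Over $t=0$ the fiber of $\widehat{\mathcal{F}}_d\to\PP^1$ is the union of the three plane Fermat curves $\{x_i=0\}\cap\mathcal{F}_d$, each $T_E$-stable, so the fiber of $\widehat{\mathcal{F}}_d/T_E$ over $t=0$ has $3$ components. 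Over $t=\infty$ the fiber of $\widehat{\mathcal{F}}_d\to\PP^1$ is $\{x_3=0\}\cap\mathcal{F}_d$ together with the first two of the three exceptional divisors over each of the $3d$ blow-up points (Lemma~\ref{three blow-ups}); $T_E$ fixes $\{x_3=0\}$ and organizes the $6d$ exceptional divisors into $6$ orbit-classes, so this fiber has $1+6=7$ components. Thus $\sum_v(f_v-1)=2+6+0=8$, and (\ref{shioda-tate}) yields
\[
\Rank E_d(\FF_{q^2}(t))=\Rank\NS(\widehat{\mathcal{F}}_d/T_E)-2-8=\begin{cases}(d+9)-2-8=d-1=q,& d\equiv 2\bmod 3,\\ (d+7)-2-8=d-3=q-2,& d\equiv 0\bmod 3.\end{cases}
\]
Since $d=q+1$ (and $q\not\equiv 0\bmod 3$ as $p\geq 5$), $d\equiv 2$ and $d\equiv 0\bmod 3$ correspond to $q\equiv 1$ and $q\equiv 2\bmod 3$, which is the claim. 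As an independent check, Tate's algorithm on (\ref{the elliptic curve}) gives type $I_{3d}$ over $t=0$, type $I_1$ over each root of $1+27t^d$, and --- after the substitution $u=t^{\lceil d/3\rceil}$ --- either good reduction ($3\mid d$) or type $IV$ ($d\equiv 2\bmod 3$) over $t=\infty$; combined with $\Rank\NS(\mathcal{E}_d)=b_2(\mathcal{E}_d)=12\lceil d/3\rceil-2$ (from supersingularity), this recomputes the same rank.

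The main obstacle is the comparison between $\widehat{\mathcal{F}}_d/T_E$ and $\mathcal{E}_d$: one must verify carefully that every quotient singularity of $\widehat{\mathcal{F}}_d/T_E$ lies over $t=0$ or $t=\infty$, so that resolving them and descending to the minimal $\mathcal{E}_d$ touches only fibral classes, and one must set up $\NS$ and the fiber counts for the normal surface $\widehat{\mathcal{F}}_d/T_E$ so that Lemma~\ref{NS for quotient} applies and Shioda--Tate still reads off $\Rank E_d(\FF_{q^2}(t))$. The most delicate concrete step is the fiber over $t=\infty$, where Lemma~\ref{three blow-ups} is needed to say precisely which of the $9d$ exceptional divisors lie in that fiber; the remaining input is routine representation theory of $T_E$ together with Tate's algorithm.
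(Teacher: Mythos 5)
Your route is the same as the paper's: evaluate the Shioda--Tate formula (\ref{shioda-tate}) on $\widehat{\mathcal{F}}_d/T_E$ rather than on $\mathcal{E}_d$, justify the transfer by Lemma~\ref{where are the blow-ups}, take the $\NS(\mathcal{F}_d)^{T_E}$ contribution ($d$ or $d-2$) from the corollary in Section~\ref{representation theory section}, and use Lemma~\ref{three blow-ups} to control the exceptional divisors and the fibers over $t=0,\infty$. Where you differ is the bookkeeping: you get $\Rank\NS(\widehat{\mathcal{F}}_d/T_E)=d+9$ (nine $T_E$-orbits of exceptional classes, each contributing one invariant) and $\sum_v(f_v-1)=8$ (three components over $t=0$, seven over $t=\infty$), whereas the paper records $d+7$ and $6$, folding the three third-round classes into a single class via the relation $E_{x_0}+E_{x_1}+E_{x_2}=3E_{x_0}$ and not charging the $t=0$ fiber separately; the two discrepancies cancel, so both computations give $q$, resp.\ $q-2$. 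Your orbit count of the $9d$ exceptional divisors is a clean and defensible way to compute the invariant rank.

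Two of your justifications need repair, one of them genuinely. First, the parenthetical claim that the $T_E$-fixed locus of $\widehat{\mathcal{F}}_d$ maps under $\pi$ into $\{0,\infty\}$ is false: the order-$d$ stabilizer of a blown-up point acts trivially on the base (because $t_0t_1t_2=t_3^3$) and therefore fixes the third exceptional divisor pointwise, and that divisor is horizontal. This particular error is harmless, since exceptional curves of any resolution are automatically fibral and Lemma~\ref{where are the blow-ups} already supplies the fibral statement for the blow-downs, but the reason you give is not the right one. Second, and this is the real gap, your value $\sum_v(f_v-1)=8$ relies on the discriminant $-t^{3d}(1+27t^d)$ of the Weierstrass model to conclude that the fibers of $\widehat{\mathcal{F}}_d/T_E$ over the roots of $1+27t^d$ are irreducible; but the discriminant controls the fibers of $\mathcal{E}_d$, not of $\widehat{\mathcal{F}}_d/T_E$, and the two can differ wherever the birational map between them fails to be an isomorphism. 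Since your rank $d+9$ is pinned down by the invariant computation, any unnoticed reducible fiber away from $t\in\{0,\infty\}$ would strictly lower the output of (\ref{shioda-tate}), so this step carries real weight. To close it you should show that the quotient introduces no extra fiber components off $t=0,\infty$: for instance, check that the non-pseudo-reflection fixed points of $T_E$ (points of $\mathcal{F}_d$ with two of $x_0,x_1,x_2$ vanishing, and special points on the fibral exceptional divisors) all lie over $t\in\{0,\infty\}$, so that $\widehat{\mathcal{F}}_d/T_E\rightarrow\PP^1$ is smooth and relatively minimal over $\PP^1\setminus\{0,\infty\}$ and hence agrees with $\mathcal{E}_d$ there, or argue directly that $T_E$ acts transitively on the components of each such fiber. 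With that supplied, your computation is correct and, together with your Tate-algorithm cross-check, gives the stated rank.
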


\begin{proof}
We'll prove this in the case $q \equiv 1 \bmod 3$; the other proof is identical, except that ``-2'' should be added in certain places, corresponding to $\Rank \NS(\mathcal{F}_d)^{T_E}$ being smaller.  
We use (\ref{shioda-tate}), but with $\mathcal{E}_d$ replaced by $\widehat{\mathcal{F}}_d/T_E$.  

Let $E_{x_0}$ denote the exceptional divisor coming from the third round of blow-ups described in Lemma~\ref{three blow-ups}.  (The third round is the important round because this exceptional divisor is not fibral.)  In the notation of the proof of Lemma~\ref{three blow-ups}, we see that $x_3$ is a local parameter for this blow-up.  A computation shows that $\ord_{x_3} (x_0) = 3$.  Within $\NS(\widehat{\mathcal{F}}_d/T_E)$, we have $V(x_0) = V(x_3)$.  Thus, modulo the subgroup generated by fibral divisors as well as elements from $\NS(\mathcal{F}_d)$ (note that we did not blow-up here), we have
\[
E_{x_0} + E_{x_1} + E_{x_2} = 3E_{x_0} = 3E_{x_1} = 3E_{x_2}.
\]
This shows that $\Rank(\widehat{\mathcal{F}}_d/T_E) = d + 7$, because 
$d = \Rank \left( \NS(\mathcal{F}_d)^{T_E} \right)$, and the first two rounds of blow-ups contribute 6, and the final round of blow-ups contributes 1, by the previous paragraph.  We also have $\sum (f_v - 1) = 6$.  So applying (\ref{shioda-tate}), we deduce the result.  
\end{proof}

Note that $E_{x_1}$ satisfies $E_{x_1}.E_d = O$, where $O$ is the point at infinity on $E_d$.  For example, we can take for $E_{x_1}$ the class of the scheme-theoretic closure of $O$, viewed as a point on the generic fiber of $\mathcal{E}_d$ (see \cite[Lecture~3, Section~5]{Ulm11}).  

\begin{prop} \label{compare two maps}
The map 
\[
L_{a,b} \leadsto \left(L_{a,b} - 3E_{x_1}\right).E_d
\]
is exactly the map $\phi_*$ described in (\ref{line to point}) in the introduction.  
\end{prop}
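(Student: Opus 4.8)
The plan is to unwind both constructions and verify they assign the same rational point to a line $L_{a,b}$. On the $\phi_*$ side, the point is built in~\eqref{line to point}: one uses the ring map~\eqref{parametrized line eqn} parametrizing $L_{a,b}$, composes with $\phi$ from~\eqref{map to affine surface} to land on the affine model of $E_d$, observes that the resulting $\FF_{q^2}[t]$-algebra $\FF_{q^2}[s_0]$ is free of rank~$3$ with $t \mapsto -s_0(\alpha s_0 + \beta)(\alpha + \beta s_0)$, hence corresponds to a closed point of $E_d$ over the degree-$3$ extension $\FF_{q^2}(s_0)/\FF_{q^2}(t)$, and then sums its three Galois conjugates. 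On the Néron–Severi side, the recipe is to take the class $L_{a,b} - 3E_{x_1}$ in $\NS(\mathcal{E}_d)$ — equivalently in $\NS(\widehat{\mathcal{F}}_d/T_E)$ via Lemma~\ref{where are the blow-ups} — and intersect it with the generic fiber $E_d$ as in Definition~\ref{L1 defn}.

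The first step is to check that $L_{a,b} - 3E_{x_1}$ actually lies in $L^1(\NS(\mathcal{E}_d))$, i.e.\ that its image in $\Div E_d$ has degree~$0$. The image of $E_{x_1}$ in $E_d$ is the single point $O$, as noted after Proposition~\ref{rank of the elliptic curve}, so $E_{x_1}.E_d$ has degree~$1$. For the line $L_{a,b}$, the point is that $\pi$ restricted to (the strict transform of) $L_{a,b}$ has degree~$3$ onto $\PP^1$: composing the parametrization $[s_0:s_1]\mapsto[\,\cdots\,]$ with $\pi([x_0:x_1:x_2:x_3]) = [x_0x_1x_2 : x_3^3]$ gives, in the chart $s_1=1$, the map $s_0 \mapsto -s_0(\alpha s_0+\beta)(\alpha+\beta s_0)$, a degree-$3$ rational function; hence $L_{a,b}.E_d$ has degree~$3$, and the combination $L_{a,b}-3E_{x_1}$ has degree~$0$. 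The second step is to identify $L_{a,b}.E_d$ as a divisor on $E_d$: after passing to $\widehat{\mathcal{F}}_d/T_E$ and restricting to the generic fiber over $\FF_{q^2}(t)$, the divisor $L_{a,b}.E_d$ is exactly the closed point cut out by the irreducible polynomial $-s_0(\alpha s_0 + \beta)(\alpha + \beta s_0) - t$ over $\FF_{q^2}(t)$ — this is precisely the closed point of $E_d$ produced in the $\phi_*$ construction, because the quotient $\widehat{\mathcal{F}}_d/T_E$ has generic fiber $E_d$ with coordinates given by $\phi$ (Lemma~\ref{generic fiber lemma}), and the coordinate ring map there is the composite of~\eqref{parametrized line eqn} and~\eqref{map to affine surface}. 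The third step is to translate "sum the three Galois conjugates and use the group law on $E_d$" into "take the class of the degree-$3$ closed point modulo $3\,O$ in $\Pic^0 E_d = E_d(\FF_{q^2}(t))$": this is exactly the standard isomorphism $\Div^0/\mathrm{lin} \xrightarrow{\sim} E_d$, under which a closed point $Q$ of degree~$3$ maps to $\mathrm{Tr}(Q) - 3O$, i.e.\ the group-law sum of its conjugates minus $3O$. Since intersecting $E_{x_1}$ with $E_d$ yields $O$, the class $(L_{a,b} - 3E_{x_1}).E_d$ is $L_{a,b}.E_d - 3O$, which under the above isomorphism is the group-law trace of the $\phi_*$-point. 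Matching conventions (choice of the zero section, sign of the trace) then finishes the identification.

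The main obstacle I expect is the bookkeeping at the generic fiber: one must be careful that passing from $\mathcal{F}_d$ through the blow-ups $\widehat{\mathcal{F}}_d$, the quotient by $T_E$, and the further blow-ups/blow-downs to $\mathcal{E}_d$ (Lemmas~\ref{three blow-ups}, \ref{generic fiber lemma}, \ref{where are the blow-ups}) does not disturb the generic fiber, and that the intersection-with-generic-fiber map "$D \mapsto D.E_d$" of Definition~\ref{L1 defn} genuinely sends the strict transform of $L_{a,b}$ to the closed point with the claimed defining polynomial — in particular that no component of $L_{a,b}\cap(\text{fibral locus})$ contributes. This is handled by the now-familiar observation that the first two exceptional divisors at each point of indeterminacy are fibral (Lemma~\ref{three blow-ups}), so they do not meet the generic fiber, while $E_{x_1}$ is chosen as the closure of $O$; thus only the degree-$3$ piece survives on the generic fiber. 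The remaining verification — that the birational modifications between $\widehat{\mathcal{F}}_d/T_E$ and $\mathcal{E}_d$ act trivially on generic fibers and hence on the "$.E_d$" map — is routine from Lemma~\ref{where are the blow-ups}, and the sign/normalization check is a short computation with the group law.
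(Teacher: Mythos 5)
Your proposal is correct and follows essentially the same route as the paper: identify $L_{a,b}.E_d$ with the degree-$3$ closed point whose Galois conjugates over $\FF_{q^2}(s_0)$ are the points from the introduction, note $E_{x_1}.E_d = O$, and read off $(L_{a,b}-3E_{x_1}).E_d \sim \sum_i (P^{(i)}_{a,b}-O)$ in $\Pic^0(E_d)$ as the group-law trace. The extra bookkeeping you flag (degree-$0$ check, fibral exceptional divisors not meeting the generic fiber) is consistent with, and slightly more detailed than, the paper's own argument.
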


\begin{proof}
By definition, $\left(L_{a,b} - 3E_{x_1}\right).E_d$ corresponds to the unique point $R \in E_d(\FF_{q^2}(t))$, such that in $\Pic^0(E_d)$ we have 
\[
R - O \sim \left(L_{a,b} - 3E_{x_1}\right).E_d.
\]
Let $P^{(1)}_{a,b}, P^{(2)}_{a,b}, P^{(3)}_{a,b}$ denote the Galois conjugates of the point over $\FF_{q^2}(s_0)$ corresponding to $L_{a,b}$ described in the introduction.  We can view these as elements of $\Pic(E_d)$, viewed now over $\FF_{q^2}(s_0)$.  Then within $\Pic^0(E_d)_{\FF_{q^2}(s_0)}$ we have 
\[
\left(L_{a,b} - 3E_{x_1}\right).E_d = P^{(1)}_{a,b} +  P^{(2)}_{a,b} + P^{(3)}_{a,b} - 3O = \left(P^{(1)}_{a,b} - O\right) +  \left( P^{(2)}_{a,b} - O\right) + \left(P^{(3)}_{a,b}- O\right),
\]
and this latter is exactly the trace.  
\end{proof}

We now return to our analysis of $\NS(\mathcal{F}_d)$ and related vector spaces.  

\begin{defn}[Projections] \label{proj to W}
Let $V$ denote any of $\NS(\mathcal{F}_d) \otimes_{\ZZ} \QQ$ or $\NS(\mathcal{F}_d) \otimes_{\ZZ} \QQ(\zeta_d)$.  For $v \in V$, we denote by $v_W$ the element $\frac{1}{d^2}\sum_{t \in T_E} tv.$  For $\lambda \in T^*$, we denote by $v_{\lambda}$ the element $\frac{1}{|T|}\Sigma_{t\in T} \lambda^{-1}(t) tv.$  These are the projections of $v$ to $W$ and to $V_{\lambda}$, respectively.  
\end{defn}



\begin{lem} \label{composition of projections lemma}
Let $v \in \NS(\mathcal{F}_d) \otimes_{\ZZ} \QQ(\zeta_d)$ denote an element which has a non-zero projection to $V_{\lambda}$, where $\lambda \in T^*$ corresponds to a tuple $(i,i,i,d-3i)$.  Then the projection of $v$ to $W$ also has a non-zero projection to $V_{\lambda}$.  
\end{lem}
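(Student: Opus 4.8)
The plan is to exploit the fact that the projections $v \mapsto v_W$ and $v \mapsto v_\lambda$ essentially commute when $\lambda$ is trivial on $T_E$. Recall from Definition~\ref{proj to W} that $v_W = \frac{1}{d^2}\sum_{t \in T_E} tv$ and $v_\lambda = \frac{1}{|T|}\sum_{t \in T}\lambda^{-1}(t)\, tv$. The crucial observation is that $\lambda$ corresponds to the tuple $(i,i,i,d-3i)$, which by the remark following Definition~\ref{W def} is exactly the condition that $\lambda$ restricts trivially to $T_E$. I would first verify that, under this hypothesis, $(v_W)_\lambda = v_\lambda$. This is a short averaging computation: writing both projections out and using that $T_E \leq T$ with $\lambda|_{T_E} = 1$, the sum over $T_E$ in $v_W$ can be absorbed into the sum over $T$ in the $\lambda$-projection (reindexing $T$ by cosets of $T_E$), so that applying $(\cdot)_\lambda$ to $v_W$ reproduces $v_\lambda$. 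The only subtlety is bookkeeping with the normalizing constants $\frac{1}{d^2}$ and $\frac{1}{|T|}$, and the fact that $|T_E| = d^2$ while $[T:T_E] = d$ (since $|T| = d^3$); these combine correctly.

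Granting $(v_W)_\lambda = v_\lambda$, the lemma is immediate: by hypothesis $v_\lambda \neq 0$ in $V_\lambda$, so $(v_W)_\lambda = v_\lambda \neq 0$, which says precisely that $v_W$ has non-zero projection to $V_\lambda$. (Here I am also implicitly using that $v_W$ lies in $\NS(\mathcal{F}_d)\otimes_\ZZ\QQ(\zeta_d)$, which is clear since averaging over the finite group $T_E$ preserves the lattice after inverting $d$, and we have already tensored with $\QQ(\zeta_d) \supseteq \QQ$.)

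The one genuine point to be careful about is that the projection operators $(\cdot)_W$ and $(\cdot)_\lambda$ must be shown to actually commute as linear operators on $V$, not merely to satisfy the identity on the specific $v$ at hand — but in fact the averaging identity $(v_W)_\lambda = v_\lambda$ I described holds for all $v \in V$ whenever $\lambda|_{T_E}$ is trivial, so this is not really an obstacle. I expect the main (modest) obstacle to be simply organizing the double sum $\frac{1}{|T|}\sum_{t\in T}\lambda^{-1}(t)\, t\bigl(\frac{1}{d^2}\sum_{s\in T_E} sv\bigr)$: one writes $ts$ for the composite group element, notes $\lambda^{-1}(t) = \lambda^{-1}(ts)$ because $\lambda^{-1}(s) = 1$ for $s \in T_E$, and then observes that as $t$ ranges over $T$ and $s$ over $T_E$ the product $ts$ covers each element of $T$ exactly $|T_E| = d^2$ times, so the inner $\frac{1}{d^2}$ exactly cancels the overcounting and one recovers $v_\lambda$. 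Since $T$ is abelian this reindexing is unambiguous. That completes the argument.
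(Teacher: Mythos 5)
Your argument is correct and is essentially the paper's proof: the paper simply states that a direct computation gives $(v_W)_\lambda = v_\lambda$ for $\lambda$ trivial on $T_E$, and your averaging/reindexing over cosets of $T_E$ (using $|T_E| = d^2$, $|T| = d^3$, and $\lambda^{-1}(s)=1$ for $s \in T_E$) is exactly that computation spelled out. No gaps; nothing further needed.
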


\begin{proof}
A direct computation shows that $(v_W)_{\lambda} = v_\lambda$ for $\lambda$ appearing in the decomposition of $W \otimes_{\QQ} \QQ(\zeta_d)$.  
\end{proof}

\begin{cor} \label{check non-zero projections}
Let $S \subseteq \NS(\mathcal{F}_d)$ denote a subset such that, for each $\lambda$ appearing in the decomposition of $W \otimes_{\QQ} \QQ(\zeta_d)$, some element $s \in S$ has a non-zero projection to $V_{\lambda}$.  Then $\left\{s_W \mid s \in S \right\}$ generates $W$ as a $\QQ[T/T_E]$-module. 
\end{cor}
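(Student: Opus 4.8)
The plan is to reduce the statement to an assertion over $\QQ(\zeta_d)$ and then invoke an elementary fact about multiplicity-free semisimple modules. Recall from the discussion preceding the corollary that $W\otimes_\QQ\QQ(\zeta_d)=\bigoplus_\lambda V_\lambda$, the sum running over the characters $\lambda\in T^*$ of the form $(i,i,i,d-3i)$, which are exactly the characters of $T$ trivial on $T_E$, and that each $V_\lambda$ is at most one-dimensional. These $\lambda$ are precisely the characters of the finite abelian group $T/T_E$, whose exponent divides $d$; since the characteristic is $0$ and $\QQ(\zeta_d)$ already contains a full set of $d$-th roots of unity, the group algebra $A:=\QQ(\zeta_d)[T/T_E]$ is split semisimple, the nonzero $V_\lambda$ are pairwise non-isomorphic simple $A$-modules, and the displayed decomposition exhibits $W\otimes_\QQ\QQ(\zeta_d)$ as a multiplicity-free semisimple $A$-module. (Here $T/T_E$ acts on $W$ because $T$ is abelian.)

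First I would carry out the descent. Let $N\subseteq W$ be the $\QQ[T/T_E]$-submodule generated by $\{s_W\mid s\in S\}$. Flatness of $\QQ\hookrightarrow\QQ(\zeta_d)$ identifies $N\otimes_\QQ\QQ(\zeta_d)$ with the $A$-submodule of $W\otimes_\QQ\QQ(\zeta_d)$ generated by $\{s_W\otimes 1\mid s\in S\}$, and since all the $\QQ$-vector spaces in sight are finite-dimensional (Néron--Severi groups of surfaces being finitely generated), $N=W$ if and only if $N\otimes_\QQ\QQ(\zeta_d)=W\otimes_\QQ\QQ(\zeta_d)$. So it suffices to prove that $\{s_W\otimes 1\}_{s\in S}$ generates $W\otimes_\QQ\QQ(\zeta_d)$ over $A$.

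Next I would record the module-theoretic fact: if $M=\bigoplus_\lambda M_\lambda$ is a finite-dimensional multiplicity-free semisimple module over a commutative ring $A$, with the $M_\lambda$ pairwise non-isomorphic simple $A$-modules, then a family $\{w_j\}\subseteq M$ generates $M$ over $A$ if and only if for every index $\lambda$ some $w_j$ has nonzero $M_\lambda$-component. The reason is that every $A$-submodule of such an $M$ has the form $\bigoplus_{\lambda\in I}M_\lambda$ for some subset $I$; applied to the cyclic submodule $Aw_j$, whose projection to $M_\lambda$ is $A\cdot(w_j)_\lambda$, hence all of $M_\lambda$ when $(w_j)_\lambda\neq 0$ and $0$ otherwise, this gives $Aw_j=\bigoplus_{\lambda:\,(w_j)_\lambda\neq 0}M_\lambda$, and therefore $\sum_j Aw_j=\bigoplus_{\lambda:\,\exists j,\,(w_j)_\lambda\neq 0}M_\lambda$, which is all of $M$ exactly under the stated condition.

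Finally I would check the condition for $M=W\otimes_\QQ\QQ(\zeta_d)$ and the family $\{s_W\}_{s\in S}$: for each $\lambda$ occurring in the decomposition, the hypothesis on $S$ supplies $s\in S$ with nonzero projection $s_\lambda\in V_\lambda$, and Lemma~\ref{composition of projections lemma} then gives $(s_W)_\lambda=s_\lambda\neq 0$, so $s_W$ has nonzero $V_\lambda$-component; the fact above finishes the proof. I expect the only real obstacle to be stating the multiplicity-free module fact cleanly and confirming that the semisimplicity and splitness hypotheses genuinely apply in this setting (they do, by characteristic $0$ and the presence of enough roots of unity); everything else is bookkeeping already arranged in Proposition~\ref{decomposition over cyclotomic}, Definition~\ref{proj to W}, and Lemma~\ref{composition of projections lemma}.
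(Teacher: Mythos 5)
Your proposal is correct and follows essentially the same route as the paper: extend scalars to $\QQ(\zeta_d)$, use the multiplicity-free eigenspace decomposition $W\otimes_{\QQ}\QQ(\zeta_d)=\bigoplus V_\lambda$ from Proposition~\ref{decomposition over cyclotomic}, and invoke Lemma~\ref{composition of projections lemma} to see that each $s_W$ has nonzero component in the relevant $V_\lambda$. The paper compresses your descent and semisimple-module bookkeeping into a one-line dimension count over $\QQ(\zeta_d)$, but the underlying argument is the same.
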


\begin{proof}
We only need to check that the $\QQ$-span of $\left\{t \cdot s_W \mid t \in T, s \in S\right\}$ has the correct dimension, and by Lemma~\ref{composition of projections lemma}, we have the correct dimension after extending scalars to $\QQ(\zeta_d)$.
\end{proof}

We now briefly describe the strategy for proving Theorems~\ref{main theorem 1} and \ref{main theorem 2}.  As described surrounding (\ref{equation fixed to curve}), to produce generators for a full-rank subgroup of $E_d$, it suffices to find images of a $\QQ$-basis of $L^1(\NS(U)^{T_E}) \otimes_{\ZZ} \QQ$ under the composition of maps appearing in Figure~\ref{NS figure}.  We will in fact produce a basis of the bigger space $W$ from Definition~\ref{W def}, which unlike $L^1$ has no ``degree 0'' requirement.  By Corollary~\ref{check non-zero projections}, we reduce to finding a subset $S \subseteq \NS(\mathcal{F}_d)$ such that for each $\lambda \in T^*$ appearing in the decomposition of $W \otimes_{\QQ} \QQ(\zeta_d)$, some $s \in S$ has non-zero projection to $\left( W \otimes_{\QQ} \QQ(\zeta_d) \right)_{\lambda}$.  The elements $s_W$, together with their $T/T_E$-translates, will then suffice generate a full-rank subgroup of $E_d$.  On the other hand, the projection $s_W$ is a sum of translates of $s$ by $t \in T_E$, and these translates all correspond to the same element of $\Div E_d$.  

Thus the proofs of Theorems~\ref{main theorem 1} and \ref{main theorem 2} reduce to producing certain elements of $\NS(\mathcal{F}_d)$ which have non-zero projections to $(\NS(\mathcal{F}_d) \otimes_{\ZZ} \QQ(\zeta_d))_{\lambda}$ for $\lambda \in T^*$ corresponding to tuples $(i,i,i,d-3i)$ for $i, d-3i \not\equiv 0 \bmod d$, together with the trivial tuple $(0,0,0,0)$.

\section{Intersections of lines on $\mathcal{F}_d$} \label{intersection section}

The N\'eron-Severi group of a surface is equipped with a non-degenerate inner product, namely, the intersection pairing.  Our strategy is to use the simple observation that if $\langle v_{\lambda}, v_{\lambda} \rangle \neq 0$, then $v_{\lambda} \neq 0$.  The following proposition is readily verified.

\begin{prop}
Take $V$ as in Definition~\ref{proj to W}.
For any $v\in V$ one has $$\langle v_\lambda,v_\lambda \rangle =\frac{1}{|T|}\sum_{t\in T} \lambda^{-1}(t)\langle v,tv\rangle.$$  
\end{prop}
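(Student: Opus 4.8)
The statement is indeed ``readily verified'': the plan is a three-line computation, and the only genuine decision is how the intersection pairing on $\NS(\mathcal{F}_d)$ has been extended to $V$. For $V=\NS(\mathcal{F}_d)\otimes_{\ZZ}\QQ(\zeta_d)$ one must take the \emph{sesquilinear} extension, say conjugate-linear in the first argument and $\QQ(\zeta_d)$-linear in the second; this is the extension whose restriction to each line $V_\lambda$ of Proposition~\ref{decomposition over cyclotomic} is non-degenerate, which is exactly what is needed for the observation ``$\langle v_\lambda,v_\lambda\rangle\neq 0\Rightarrow v_\lambda\neq 0$'' to be of any use, and it is the convention under which the asserted formula (with $\lambda^{-1}$, not $\lambda$) comes out. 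With this understood, the first step is to substitute $v_\lambda=\frac{1}{|T|}\sum_{s\in T}\lambda^{-1}(s)\,sv$ into both slots and expand by sesquilinearity:
\[
\langle v_\lambda,v_\lambda\rangle=\frac{1}{|T|^2}\sum_{s,t\in T}\overline{\lambda^{-1}(s)}\;\lambda^{-1}(t)\,\langle sv,tv\rangle .
\]
Because each $\lambda(s)$ is a root of unity, $\overline{\lambda^{-1}(s)}=\lambda(s)$, so the coefficient is $\lambda(s)\lambda^{-1}(t)$. (When $V=\NS(\mathcal{F}_d)\otimes_{\ZZ}\QQ$ only the trivial $\lambda$ is relevant and the same computation runs with the ordinary bilinear pairing.)

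Next I would invoke $T$-invariance of the intersection pairing: the group $T$ acts on $\mathcal{F}_d$ by automorphisms, hence preserves the intersection pairing on $\NS(\mathcal{F}_d)$, and since $T$ acts $\QQ(\zeta_d)$-linearly and commutes with coefficientwise conjugation, this invariance passes to the sesquilinear extension on $V$. Thus $\langle sv,tv\rangle=\langle v,s^{-1}tv\rangle$ for all $s,t\in T$. Now reindex the double sum: for each fixed $s$, as $t$ runs over $T$ so does $u:=s^{-1}t$, and $\lambda^{-1}(t)=\lambda^{-1}(s)\lambda^{-1}(u)$. Hence
\[
\langle v_\lambda,v_\lambda\rangle=\frac{1}{|T|^2}\sum_{s\in T}\sum_{u\in T}\lambda(s)\lambda^{-1}(s)\lambda^{-1}(u)\,\langle v,uv\rangle=\frac{1}{|T|^2}\,|T|\sum_{u\in T}\lambda^{-1}(u)\,\langle v,uv\rangle=\frac{1}{|T|}\sum_{u\in T}\lambda^{-1}(u)\,\langle v,uv\rangle,
\]
which is exactly the assertion (after renaming $u$ to $t$).

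There is no real obstacle in this argument; it is pure bookkeeping. The point most likely to trip one up is keeping the conjugation and inversion conventions consistent, so that the character surviving the $s$-summation is $\lambda^{-1}$ rather than $\lambda$ — with the opposite sesquilinear convention one would instead obtain $\frac{1}{|T|}\sum_{t}\lambda(t)\langle v,tv\rangle$, while a naive $\QQ(\zeta_d)$-bilinear extension would (wrongly) give $0$ whenever $\lambda^2\neq 1$. The only other thing to check, and it is immediate, is that $t\mapsto s^{-1}t$ is a bijection of $T$, which is what legitimizes the reindexing.
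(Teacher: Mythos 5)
Your computation is correct and is precisely the verification the paper leaves to the reader (``readily verified''): expand the projector $v_\lambda=\frac{1}{|T|}\sum_{s\in T}\lambda^{-1}(s)\,sv$, use that $T$ acts by automorphisms and hence preserves the intersection pairing, and reindex using orthogonality of characters; your identification of the Hermitian (sesquilinear) extension as the intended pairing on $\NS(\mathcal{F}_d)\otimes_{\ZZ}\QQ(\zeta_d)$ is also the right reading, since the naive bilinear extension would make the left side vanish for $\lambda^2\neq 1$. One small remark: the $\lambda$ versus $\lambda^{-1}$ issue you flag at the end is actually immaterial, because symmetry and $T$-invariance give $\langle v,tv\rangle=\langle v,t^{-1}v\rangle$, so $\sum_{t\in T}\lambda(t)\langle v,tv\rangle=\sum_{t\in T}\lambda^{-1}(t)\langle v,tv\rangle$ and both conventions yield the stated formula.
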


Note that in the case $v = L$ corresponds to a line, this sum takes on a very simple form because the only possible values of $\langle L,tL \rangle$ are $0, 1, 2-d$ (with $2-d$ corresponding to the self-intersection).

If one lets $\I_L$ denote the set of $t\in T$ such that $tL\neq L$ and $tL \cap L \neq \emptyset$, then one has the following particularly simple formula for $\langle L_\lambda,L_\lambda \rangle$.

\begin{prop} \label{inner product for a line}
For $L$ a line, one has
\[
\langle L_\lambda,L_\lambda\rangle=\frac{1}{|T|}\left(2-d+\sum_{t\in \I_L}\lambda^{-1}(t)\right).
\]
\end{prop}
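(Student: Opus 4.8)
The plan is to specialize the general formula from the preceding proposition, namely
\[
\langle L_\lambda, L_\lambda \rangle = \frac{1}{|T|} \sum_{t \in T} \lambda^{-1}(t) \langle L, tL \rangle,
\]
and to evaluate the inner products $\langle L, tL \rangle$ case by case. The key input, already flagged in the remark just before the statement, is that for $L$ a line on $\mathcal{F}_d$ and $t \in T$, the translate $tL$ is again a line, and two distinct lines on a surface meet in either $0$ or $1$ points, so $\langle L, tL \rangle \in \{0, 1\}$ when $tL \neq L$, while $\langle L, L \rangle = 2 - d$ (a line on $\mathcal{F}_d$ is a smooth rational curve, and its self-intersection is computed from the adjunction formula on the degree-$d$ surface $\mathcal{F}_d$, giving $L^2 = 2 - d$).

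The main step is then to partition $T$ into three pieces according to the value of $\langle L, tL \rangle$: first, the stabilizer-type set $\{t : tL = L\}$; second, the set $\I_L$ of $t$ with $tL \neq L$ but $tL \cap L \neq \emptyset$, contributing $\langle L, tL \rangle = 1$; and third, everything else, contributing $0$. The one subtlety I would address carefully is the first set: the formula as stated replaces the entire contribution of $\{t : tL = L\}$ by the single term $2 - d$, which is correct precisely when the only $t \in T$ fixing $L$ (as a subvariety) is $t = e$. I would note that $T_E$ (hence a fortiori the relevant part of $T$) acts with the stabilizer of a generic line being trivial — or, more safely, I would simply absorb any nontrivial stabilizer into the statement by observing that the standard lines $L_{a,b}$ of Definition~\ref{standard line def} have trivial stabilizer in $T$, which is what is actually used. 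Granting that, the sum collapses to
\[
\langle L_\lambda, L_\lambda \rangle = \frac{1}{|T|}\left( \lambda^{-1}(e)(2-d) + \sum_{t \in \I_L} \lambda^{-1}(t) \cdot 1 + \sum_{\text{rest}} \lambda^{-1}(t) \cdot 0 \right) = \frac{1}{|T|}\left( 2 - d + \sum_{t \in \I_L} \lambda^{-1}(t) \right),
\]
using $\lambda^{-1}(e) = 1$, which is exactly the claimed formula.

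The only genuine obstacle I anticipate is justifying that $\langle L, tL \rangle$ takes no value other than $0$, $1$, or $2-d$ — in particular ruling out that $tL$ could coincide with $L$ for nontrivial $t$ while still wanting to treat its contribution as part of the ``$\I_L$'' bookkeeping, and ruling out tangency or higher intersection multiplicity between two distinct lines. The latter is immediate since two distinct lines in projective space meet transversally in at most one point, and the surface is smooth along $\mathcal{F}_d$ (we work on the smooth Fermat surface, not a blow-up), so their intersection number on $\mathcal{F}_d$ equals the number of set-theoretic intersection points, namely $0$ or $1$. For the self-intersection, I would cite the adjunction formula: since $L \cong \PP^1$ and the canonical class of $\mathcal{F}_d$ is $(d-4)H$ where $H$ is the hyperplane class with $H \cdot L = 1$, we get $-2 = 2g-2 = L^2 + K_{\mathcal{F}_d} \cdot L = L^2 + (d-4)$, hence $L^2 = 2 - d$. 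Once these facts are in place the proposition is a one-line substitution, so I would keep the write-up short and front-load the intersection-theory justifications.
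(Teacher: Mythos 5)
Your proof is correct and follows essentially the same route as the paper, which simply specializes the general averaging formula and uses that $\langle L, tL\rangle$ can only be $0$, $1$, or $2-d$ (the latter via adjunction on $\mathcal{F}_d$). Your extra care in checking that the $T$-stabilizer of the lines $L_{a,b}$ is trivial is a point the paper leaves implicit, and it is indeed what makes the single $2-d$ term legitimate.
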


We now study this set $\I_L$ in detail for the special case $L = L_{a,b}$, with $L_{a,b}$ as in Definition~\ref{standard line def}.

\begin{lem}
Assume $t \in T$ is such that the parametrized lines $L_{a,b}$ and $tL_{a,b}$ are distinct and intersect at the images of the same point $[u_0:v_0] \in \PP^1_{\FF_{q^2}}$.  Then $t$ has a representative in which exactly three of its four coordinates are equal to $1$.  Conversely, for any such $t$, we can find $[u_0:v_0] \in \PP^1_{\FF_{q^2}}$ such that $L_{a,b}$ and $tL_{a,b}$ intersect at the image of this point.
\end{lem}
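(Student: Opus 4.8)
The plan is to work directly with the two parametrizations of $L_{a,b}$ in Definition~\ref{standard line def} and compute the intersection condition explicitly. First I would set up coordinates: given $t = [t_0:t_1:t_2:t_3] \in T = \mu_d^4/\mu_d$, the line $tL_{a,b}$ is parametrized by $[u:v] \mapsto [t_0 u : t_1 v : t_2(au+bv) : t_3(av+bu)]$. Two points $[u_0:v_0]$ and $[u_0':v_0']$ of $\PP^1$ give intersecting images on $\mathcal{F}_d$ precisely when $[u_0 : v_0 : au_0+bv_0 : av_0+bu_0] = [t_0 u_0' : t_1 v_0' : t_2(au_0'+bv_0') : t_3(av_0'+bu_0')]$ as points of $\PP^3$; the hypothesis of the lemma is the special case where the two preimage points coincide, $[u_0:v_0] = [u_0':v_0']$. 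So I would write out the four coordinate equalities under that assumption and analyze them.

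Once the preimage points coincide, the four equalities say there is a scalar $c \in \FF_{q^2}^*$ with $t_0 u_0 = c u_0$, $t_1 v_0 = c v_0$, $t_2(au_0+bv_0) = c(au_0+bv_0)$, $t_3(av_0+bu_0) = c(av_0+bu_0)$. After rescaling $t$ so that $c = 1$ (legitimate since $t$ is only defined up to the diagonal $\mu_d$), each $t_j$ is forced to equal $1$ whenever the corresponding linear form in $(u_0,v_0)$ is nonzero. So the set of coordinates of $t$ that are allowed to differ from $1$ is exactly the set of indices $j \in \{0,1,2,3\}$ for which the $j$-th linear form — namely $u_0$, $v_0$, $au_0+bv_0$, $av_0+bu_0$ — vanishes at $[u_0:v_0]$. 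Here the key arithmetic input is that $a \in \FF_q$, $b \in \FF_{q^2}\setminus\FF_q$, and $a^2+1=b^2$ with $a \neq 0$ (if $a=0$ then $b^2=1$, contradicting $b \notin \FF_q$), so the four linear forms $u,v,au+bv,av+bu$ are pairwise non-proportional; hence at any fixed point $[u_0:v_0]$ at most one of them can vanish. That forces at least three coordinates of $t$ to be $1$; and since $tL_{a,b} \neq L_{a,b}$ we cannot have all four equal to $1$, so exactly three equal $1$ and the fourth (at the unique vanishing index) is an arbitrary element of $\mu_d$, not equal to $1$. This proves the forward direction.

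For the converse, given $t$ with exactly three coordinates equal to $1$ (say the $j$-th is the exceptional one), I would simply take $[u_0:v_0]$ to be the unique zero of the $j$-th linear form: $[0:1]$ for $j=0$, $[1:0]$ for $j=1$, $[b:-a]$ for $j=2$, $[-a:b]$ (i.e. the zero of $av+bu$) for $j=3$. One then checks that at this point three of the four coordinates of $L_{a,b}([u_0:v_0])$ are nonzero and fixed by $t$, while the fourth is $0$, so $tL_{a,b}$ and $L_{a,b}$ meet at the image of $[u_0:v_0]$; and they are distinct because $t$ acts nontrivially. I expect the main obstacle to be purely bookkeeping: verifying the non-proportionality of the four linear forms from the constraints $a \in \FF_q$, $b \notin \FF_q$, $a^2+1=b^2$, and being careful about the projective rescaling of $t$ by the diagonal $\mu_d$ (which is exactly what lets us normalize $c=1$ and makes the statement "exactly three coordinates equal to $1$" well-posed rather than "three ratios are trivial"). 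There is nothing deep here, but the case analysis over the four indices must be done cleanly.
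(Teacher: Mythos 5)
Your proof is correct and follows essentially the same route as the paper: both reduce the hypothesis to the statement that at the parameter point $[u_0:v_0]$ at most one of the four coordinates $u_0,\,v_0,\,au_0+bv_0,\,av_0+bu_0$ can vanish (using $a\neq 0$, $b\neq 0$, $a^2\neq b^2$), so at least three entries of $t$ agree with the projective scalar and can be normalized to $1$, and both prove the converse by taking $[u_0:v_0]$ to be the zero of the $j$-th form. The only difference is cosmetic: you argue uniformly via pairwise non-proportionality of the four linear forms, where the paper splits into the cases $[0:1]$ and $[1:v_0]$; your brief assertion that $tL_{a,b}\neq L_{a,b}$ in the converse is stated at the same level of detail as in the paper.
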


\begin{proof}
Let's first assume that $L_{a,b}$ and $tL_{a,b}$ are distinct lines which intersect at the images of the point $[0:1] \in \PP^1_{\FF_{q^2}}$.  If we write $t = [t_0 : t_1 : t_2 : t_3]$, this implies that 
\[
[0:1:b:a] = [0:t_1 : t_2 b : t_3 a].  
\]
Thus $t \in T$ has a representative of the form $[*: 1 : 1 : 1]$, and we moreover know the first coordinate is not $1$ from the assumption that the lines are distinct.  

Now we can assume the distinct lines $L_{a,b}$ and $tL_{a,b}$ intersect at the images of a point $[1:v_0]$.  Because at most one of the elements $v_0$, $a + bv_0$ and $av_0 + b$ can be zero, a similar computation shows that $t$ has a representative in which exactly three of its entries are equal to zero.  

We now prove the second half of the lemma.  Assume $t \in T$ has a representative in which exactly three of its four coordinates are equal to $1$; say the non-identity entry occurs in the $j$-th position.  Then the lines $L_{a,b}$ and $tL_{a,b}$ are distinct and they intersect at the images of the point $[u_0: v_0]$ for which the $j$-th coordinate of $[u:v:au+bv:bu+av]$ vanishes. 
\end{proof}

Let $\pi$ denote the $q$-power Frobenius automorphism.

\begin{lem} \label{dth root lemma}
An element $x\in \FF_{q^2}$ is a $d$-th root of unity for $d=q+1$ if and only if $\pi(x)=x^{-1}$.  
\end{lem}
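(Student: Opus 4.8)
The plan is to prove the characterization of $d$-th roots of unity via a norm computation. Recall that $d = q+1$ and that the $q$-power Frobenius $\pi$ generates $\mathrm{Gal}(\FF_{q^2}/\FF_q)$, so the norm map $\nm: \FF_{q^2}^* \to \FF_q^*$ is given by $\nm(x) = x \cdot \pi(x) = x^{q+1} = x^d$. The key observation is that $x$ is a $d$-th root of unity exactly when $x^d = 1$, i.e. when $\nm(x) = 1$.

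First I would handle the easy direction. If $\pi(x) = x^{-1}$, then multiplying both sides by $x$ gives $x \cdot \pi(x) = 1$, that is $x^{q+1} = x^d = 1$, so $x$ is a $d$-th root of unity. (One should note in passing that $x = 0$ is excluded: the equation $\pi(x) = x^{-1}$ forces $x \neq 0$, and indeed $0$ is not a $d$-th root of unity.) Conversely, suppose $x^d = 1$; in particular $x \neq 0$. Then $x \cdot \pi(x) = x \cdot x^q = x^{q+1} = x^d = 1$, hence $\pi(x) = x^{-1}$. That is really the whole argument — both implications follow from the single identity $x \cdot \pi(x) = x^{q+1}$.

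There is no serious obstacle here; the only thing to be careful about is the degenerate case $x = 0$, and the fact that we are implicitly using $\pi(x) = x^q$ for all $x \in \FF_{q^2}$, which is the definition of the $q$-power Frobenius. If one wanted to present it even more cleanly, one could phrase it as: for any $x \in \FF_{q^2}^*$ we have $x \pi(x) = \nm_{\FF_{q^2}/\FF_q}(x) = x^{q+1}$, and $x$ has order dividing $d = q+1$ iff this equals $1$ iff $\pi(x) = x^{-1}$. I would write it in the two-implication form for transparency.
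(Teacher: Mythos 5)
Your proof is correct and uses exactly the same idea as the paper, whose entire proof is the one-line identity $x^d = x\cdot\pi(x)$; you have simply spelled out both implications and the $x=0$ caveat.
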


\begin{proof}
Note that $x^d = x \cdot \pi(x)$.
\end{proof}

\begin{lem} \label{parametrization of difficult intersections}
Assume $t \in T$ is such that the parametrized lines $L_{a,b}$ and $tL_{a,b}$ are distinct and intersect at the images of two different points $[u_0:v_0], [u_1:v_1] \in \PP^1_{\FF_{q^2}}$.  Then there exists $\gamma \in \FF_{q^2}$ such that $\tr(\gamma) \neq 0$ and $[u_0:v_0] = [\gamma:1]$ and $[u_1 : v_1] = [-\pi(\gamma):1]$, where $\tr(\gamma):= \gamma + \gamma^q$.  Conversely, any such value of $\gamma \in \FF_{q^2}$ corresponds to an intersection between $L_{a,b}$ and $tL_{a,b}$ for some unique choice of $t \in T$.  
\end{lem}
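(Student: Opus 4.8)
The plan is to set up explicit coordinates and reduce the statement to a pair of equations relating the two intersection points. Write $t = [t_0:t_1:t_2:t_3] \in T$, and parametrize both lines via the first parametrization of Definition~\ref{standard line def}, so $L_{a,b}$ sends $[u:v] \mapsto [u:v:au+bv:av+bu]$ and $tL_{a,b}$ sends $[u':v'] \mapsto [t_0 u': t_1 v': t_2(au'+bv'): t_3(av'+bu')]$. If $L_{a,b}$ and $tL_{a,b}$ meet at the image of $[u_0:v_0]$ on the first line and $[u_1:v_1]$ on the second, I would first normalize: since the lines are distinct but meet in two points, neither intersection point can have $u$-coordinate zero (that case was handled in the previous lemma and forces $t$ to have three coordinates equal to $1$, which produces only a single intersection point). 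So I can write the intersection points as $[\gamma:1]$ and $[\gamma':1]$ with $\gamma \ne \gamma'$, and I need to show $\gamma' = -\pi(\gamma)$ and $\tr(\gamma) \ne 0$.

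The key step is to extract, from the four projective coordinate equalities at each of the two intersection points, enough relations to pin down $t$ and force the claimed relation between $\gamma$ and $\gamma'$. At the point $[\gamma:1]$ we get $[\gamma:1:a\gamma+b:a+b\gamma] = [t_0\gamma: t_1 : t_2(a\gamma+b): t_3(a+b\gamma)]$ for some scalar; solving, $t_1$ is the scaling factor, $t_0 = t_1$, $t_2 = t_1 \cdot \frac{a\gamma+b}{a\gamma+b}$... wait — more carefully, the second line's image of $[\gamma:1]$ need not be parametrized by the same parameter, so I should instead say: there is $[u_1:v_1]$ with $t$ applied to the $L_{a,b}$-image of $[u_1:v_1]$ equal (projectively) to the $L_{a,b}$-image of $[\gamma:1]$. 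This gives $[t_0 u_1 : t_1 v_1 : t_2(au_1+bv_1): t_3(av_1+bu_1)] = [\gamma:1:a\gamma+b:a+b\gamma]$. Here is where Lemma~\ref{dth root lemma} enters decisively: each $t_j$ is a $d$-th root of unity, so $\pi(t_j) = t_j^{-1}$. I would apply $\pi$ to the whole system, use that $a \in \FF_q$ and $b \in \FF_{q^2}\setminus\FF_q$ with $b^q = -b$ (since $b^2 = a^2+1 \in \FF_q$ and $b \notin \FF_q$ force $b^q = -b$), and compare with the \emph{second} intersection point's equations. The symmetry between the $x_2$- and $x_3$-coordinates under $b \mapsto -b$, combined with $\pi(t_j) = t_j^{-1}$, is exactly what should swap the roles of the two intersection points and yield $[u_1:v_1] = [-\pi(\gamma):1]$.

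Concretely, after eliminating the $t_j$'s I expect to land on the relation $\gamma' = -\gamma^q = -\pi(\gamma)$ together with a nonvanishing condition; the condition $\tr(\gamma) \ne 0$ should emerge as precisely the requirement that $\gamma$ and $\gamma' = -\pi(\gamma)$ be \emph{distinct} points (if $\tr(\gamma) = 0$ then $\gamma = -\gamma^q = \gamma'$, collapsing to one intersection point), which is part of the hypothesis. For the converse, given $\gamma$ with $\tr(\gamma) \ne 0$, I would define $t$ directly by the formulas forced above — reading off $t_0, t_1, t_2, t_3$ as the ratios of corresponding coordinates of the $L_{a,b}$-images of $[\gamma:1]$ and $[-\pi(\gamma):1]$ — check that these ratios are indeed $d$-th roots of unity using Lemma~\ref{dth root lemma} (i.e.\ verify $\pi(t_j) = t_j^{-1}$, which is again a consequence of $a^q = a$, $b^q = -b$, and the defining relation), check $t_0 t_1 t_2 t_3$ lands correctly in $T$, verify $t$ is well-defined (the scalings are consistent across the four coordinates, which uses $a^2+1 = b^2$), and verify uniqueness of $t$ from the fact that two intersection points on a line determine the projective coordinate change. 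The main obstacle is organizational rather than conceptual: managing the projective scalings and the interplay of Frobenius with the parameters $a, b$ carefully enough to see the sign $-\pi(\gamma)$ pop out cleanly, and confirming that the genericity hypothesis $\tr(\gamma)\neq 0$ is equivalent to "two distinct intersection points" rather than something stronger.
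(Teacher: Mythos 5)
Your plan is correct, and its forward half takes a genuinely different route from the paper's. The paper normalizes the two parameters as $[u_0:1],[u_1:1]$, applies Lemma~\ref{dth root lemma} to the ratios $u_0/u_1$ and $(au_0+b)/(au_1+b)$ to get $u_0^{q+1}=u_1^{q+1}$ and $\tr(bu_0)=\tr(bu_1)$, and concludes that $bu_0,bu_1$ are distinct elements with equal norm and trace, hence Galois conjugate, whence $bu_1=\pi(bu_0)=-b\pi(u_0)$. You instead apply $\pi$ to the entire system $L_{a,b}([\gamma:1])=t\,L_{a,b}([\gamma':1])$ (take the representative of $t$ with $t_1=1$, so the projective scalar is $1$): using $\pi(t_j)=t_j^{-1}$, $a^q=a$, $b^q=-b$, and negating the appropriate equations, the transformed system is exactly $L_{a,b}([-\pi(\gamma'):1])=t\,L_{a,b}([-\pi(\gamma):1])$ with the \emph{same} $t$; this computation, which you only sketch (``I expect to land on\dots''), does go through. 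But to finish you must state one more ingredient explicitly: the two lines are distinct, so they meet in a unique point and the parameter pair describing the intersection is unique; comparing the two systems then gives $(\gamma,\gamma')=(-\pi(\gamma'),-\pi(\gamma))$, i.e.\ $\gamma'=-\pi(\gamma)$ --- the ``swap'' alone does not force the identification. What the two routes buy: the paper's is pure field algebra needing only two of the three ratios; yours avoids the norm/trace step at the cost of the (elementary) geometric uniqueness of the intersection point. Your converse and uniqueness of $t$ coincide with the paper's argument (read off the coordinatewise ratios of the images of $[\gamma:1]$ and $[-\pi(\gamma):1]$ and verify $\pi(t_j)=t_j^{-1}$; note that $\tr(\gamma)\neq 0$ also excludes $\gamma=0,\,-b/a,\,-a/b$, so all four coordinates are nonzero, the ratios are defined, and $t$ is pinned down). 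Two small corrections to your write-up: the parameter you must exclude in order to write both points as $[\ast:1]$ is $[1:0]$ (vanishing second coordinate), not the point with vanishing $u$-coordinate, and the right reason is not the previous lemma (which concerns intersection at the \emph{same} parameter) but simply that the second coordinate of $t\,L_{a,b}([u_1:v_1])$ is $t_1v_1$, so $v_0=0$ forces $v_1=0$ and the two parameters coincide, contrary to hypothesis; also, membership in $T$ requires only that each ratio lie in $\mu_d$ modulo a common scalar --- there is no product condition on $t_0t_1t_2t_3$ (that constraint defines $T_E$).
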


\begin{proof}
We begin by proving the first half of the lemma.  Thus we assume we are given distinct points $[u_0:v_0]$ and $[u_1:v_1]$ and $t \in T$ such that $L_{a,b}$ and $tL_{a,b}$ intersect at the images of these points in $\PP^1_{\FF_{q^2}}$.  We first note that if $v_0 = 0$, then $v_1 = 0$ and we have the same point in $\PP^1$, contrary to our assumption.  Hence we can express our points as $[u_0 : 1]$ and $[u_1 : 1]$, and we are assuming they are unequal.  We would like to show that if 
\[
\frac{u_0}{u_1}, \, \frac{au_0 + b}{au_1 + b}, \, \frac{a+bu_0}{a + bu_1}
\]
are all $(q+1)$-st roots of unity, then $u_1 = -\pi(u_0)$.  

If $\frac{u_0}{u_1}$ is a $(q+1)$-st root of unity, then $u_0^{q+1} = u_1^{q+1}$.  Using that $\pi(b) = -b$, Lemma~\ref{dth root lemma}, and the fact that $\frac{au_0 + b}{au_1 + b}$ is a $(q+1)$-st root of unity yields
\begin{align*}
(au_0^q - b)(au_0 + b) &= (au_1 + b)(au_1^q - b) \\
a^2u_0^{q+1} - abu_0 + abu_0^q - b^2 &= a^2u_1^{q+1} -abu_1 + abu_1^q - b^2 \\
\intertext{(and then, using that $u_0^{q+1} = u_1^{q+1}$)}
-abu_0 + abu_0^q &= -abu_1 + abu_1^q. 
\end{align*}
This shows that $\tr(bu_0) = \tr(bu_1)$.  Because $u_0^{q+1} = u_1^{q+1}$, we also have that $\nm(bu_0) = \nm(bu_1)$, where $\nm$ denotes the norm from $\FF_{q^2}$ to $\FF_q$.  This shows that $bu_0$ and $bu_1$ are conjugate, and we are assuming $u_0 \neq u_1$.  Hence 
\[
-b\pi(u_0) = \pi(bu_0) = bu_1.
\]
This completes the proof of the first half of the lemma.

For proving the second half of the lemma, our strategy is to consider the images of these points, $[\gamma: 1: a\gamma + b: b\gamma + a]$ and $[-\pi(\gamma): 1 : -a\pi(\gamma) + b: -b\pi(\gamma) + a]$, and to use Lemma~\ref{dth root lemma} to show that the coordinate-wise ratios are all $d$-th roots of unity.  This is a direct calculation, using that $\pi(a) = a$ and $\pi(b) = -b$.  
\end{proof}

We summarize the results of this section in the following.

\begin{prop} \label{intersection summary prop}
Fix a line $L_{a,b}$ as in Definition~\ref{standard line def}.  The following is a complete list of the intersections between $L_{a,b}$ and $tL_{a,b}$, for $t \in T$:
\begin{enumerate}
\item There is the self-intersection when $t = [1:1:1:1]$;
\item For each $t \in T$ with a representative having exactly three entries equal to $1$, the lines intersect once; \label{three entries}
\item For each $\gamma \in \FF_{q^2}$ such that $\tr(\gamma) \neq 0$, there is a unique value of $t$ such that the parametrized lines $L_{a,b}$ and $tL_{a,b}$ intersect at the images of the points $[\gamma: 1]$ and $[-\pi(\gamma):1]$ in $\PP^1_{\FF_{q^2}}$, respectively.  
\end{enumerate}
Moreover, there is no repetition among the values of $t \in T$ just enumerated.  
\end{prop}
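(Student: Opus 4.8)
The plan is to assemble Proposition~\ref{intersection summary prop} directly from the three preceding lemmas, taking care only with the bookkeeping needed to rule out repetition. The first two items are immediate: item~(1) is the tautological statement that $t = [1:1:1:1]$ fixes $L_{a,b}$, giving the self-intersection, and item~(2) is exactly the content of Lemma~\ref{parametrization of difficult intersections}'s predecessor (the lemma characterizing $t$ for which $L_{a,b}$ and $tL_{a,b}$ meet at the image of a single point), which says such intersections correspond precisely to the $t\in T$ having a representative with exactly three coordinates equal to $1$, and that each such $t$ does produce exactly one intersection point. Item~(3) is precisely Lemma~\ref{parametrization of difficult intersections}: distinct lines $L_{a,b}$, $tL_{a,b}$ meeting at two points correspond bijectively to elements $\gamma\in\FF_{q^2}$ with $\tr(\gamma)\neq 0$, via $[u_0:v_0]=[\gamma:1]$, $[u_1:v_1]=[-\pi(\gamma):1]$, with a unique $t$ for each such $\gamma$.

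The one genuine thing to verify is the final sentence: no value of $t\in T$ appears twice across items (1)--(3). First, the $t$ of item~(1) is the identity of $T$, while the $t$'s in items (2) and (3) are non-trivial (the lines are assumed distinct), so (1) is disjoint from (2) and (3). Second, items~(2) and (3) are distinguished by the \emph{number} of intersection points of $L_{a,b}$ with $tL_{a,b}$: in item~(2) there is a single intersection point, and in item~(3) there are two distinct intersection points. Since two distinct lines in $\PP^3$ meet in at most one point, but the lines here are images of $\PP^1$ under the parametrization and the map need not be injective on the relevant locus --- more precisely, the relevant count is the number of points $[u_0:v_0]\in\PP^1$ whose image under $L_{a,b}$ lands on $tL_{a,b}$ --- so I should phrase this as: an element $t$ producing an item-(2) intersection has exactly one such parameter value, whereas an item-(3) element has exactly two, by the lemmas; hence the two lists are disjoint. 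Finally, within item~(3) there is no repetition because, by the converse half of Lemma~\ref{parametrization of difficult intersections}, the map $\gamma\mapsto t$ is well-defined and the data of the pair $\{[\gamma:1],[-\pi(\gamma):1]\}$ is recovered from $t$; distinct admissible $\gamma$ (up to the symmetry $\gamma\leftrightarrow -\pi(\gamma)$, which is already accounted for since both orderings give the same unordered pair and the same $t$) give distinct $t$. Within item~(2), distinctness of the $t$'s across the four choices of position for the non-identity coordinate is clear since these are manifestly different elements of $T=\mu_d^4/\mu_d$.

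The only mild subtlety --- and the step I would be most careful about --- is making sure the ``exactly one'' versus ``exactly two'' dichotomy is airtight: I must confirm that an element $t$ of the item-(2) type cannot \emph{also} arise, for a different reason, as an item-(3) element, i.e. that the two parametrization conditions (one coordinate ratio forced to vanish versus the $\tr(\gamma)\neq 0$ condition) never coincide. This follows because in the item-(2) case exactly one of the coordinates $v_0$, $au_0+bv_0$, $bu_0+av_0$ vanishes at the intersection point, forcing the intersection to occur at a single value of $[u_0:v_0]$, whereas in item-(3) none of these vanish (as shown in the proof of Lemma~\ref{parametrization of difficult intersections}, where we reduced to $v_0=v_1=1$ and all three ratios were non-trivial $d$-th roots of unity). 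Thus the vanishing-locus behavior of the three coordinates distinguishes the two cases, and the lists are disjoint. With this, Proposition~\ref{intersection summary prop} follows by simply collating Lemma~\ref{parametrization of difficult intersections} and its predecessor.
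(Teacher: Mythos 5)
The paper gives no separate proof of this proposition---it is stated as a summary of the two preceding lemmas---so your plan of collating those lemmas and then checking non-repetition is exactly the intended route, and your treatment of items (1)--(3) themselves is fine. The trouble is in the bookkeeping for the ``no repetition'' claim. Your stated dichotomy is wrong: for any $t$ with $tL_{a,b}\neq L_{a,b}$ the two lines meet in at most one point, and since the parametrization of $L_{a,b}$ is injective there is exactly \emph{one} parameter value $[u_0:v_0]$ with $L_{a,b}([u_0:v_0])\in tL_{a,b}$, in case (3) just as in case (2). The second parameter $[-\pi(\gamma):1]$ in case (3) is the parameter of the \emph{same} intersection point on the other line, i.e.\ $L_{a,b}([\gamma:1])=t\cdot L_{a,b}([-\pi(\gamma):1])$; it is not a second point of $L_{a,b}$ lying on $tL_{a,b}$ (that point lies on $t^{-1}L_{a,b}$). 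So ``one versus two intersection points,'' and its corrected form ``one versus two parameter values,'' cannot separate the two lists. The correct dichotomy, which your final paragraph essentially reaches, is whether the parameters on the two parametrizations at the unique intersection point agree (type (2)) or differ (type (3)); equivalently, whether that point has a vanishing coordinate. Indeed $L_{a,b}([u_0:v_0])=t\cdot L_{a,b}([u_0:v_0])$ with $t$ nontrivial forces some coordinate of $[u_0:v_0:au_0+bv_0:av_0+bu_0]$ to vanish, whereas for $\tr(\gamma)\neq 0$ none of $\gamma$, $a\gamma+b$, $a+b\gamma$ vanish (each vanishing would force $\tr(\gamma)=0$). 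Since $L_{a,b}\cap tL_{a,b}$ is a single point when the lines are distinct, the same $t$ cannot occur in both lists; keep that argument and discard the counting claim.

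The other soft spot is injectivity within item (3): ``the data of the pair is recovered from $t$'' is asserted, not proved, and it is not immediate from the coordinates of $t_\gamma$ (its first coordinate only determines $\gamma^{q-1}$, which is far from determining $\gamma$). The clean argument is geometric: if $t_\gamma=t_{\gamma'}$ with $\gamma\neq\gamma'$, then the distinct points $L_{a,b}([\gamma:1])$ and $L_{a,b}([\gamma':1])$ both lie on $t_\gamma L_{a,b}$, so $t_\gamma L_{a,b}=L_{a,b}$; but the only element of $T$ stabilizing $L_{a,b}$ is the identity (a nontrivial stabilizer would have to fix the unique points of $L_{a,b}$ with $x_0=0$ and with $x_1=0$, namely $[0:1:b:a]$ and $[1:0:a:b]$, forcing all four coordinates of $t$ to be equal), and $t_\gamma=1$ would give $\gamma=-\pi(\gamma)$, i.e.\ $\tr(\gamma)=0$, a contradiction. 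This stabilizer fact is also what guarantees the uniqueness of the intersection point used above (and is implicitly needed for the proposition's application to the intersection numbers), so it deserves a sentence even though it is easy; neither lemma states it.
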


\begin{cor} \label{trivial character}
Let $L = L_{a,b}$ denote a line as in Definition~\ref{standard line def} or the corresponding element of $\NS(\mathcal{F}_d)$.  Let $\lambda$ denote the trivial character corresponding to the tuple $(0,0,0,0)$.  Then $\langle L_{\lambda}, L_{\lambda} \rangle \neq 0$.  
\end{cor}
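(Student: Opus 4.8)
The plan is to feed the trivial character into Proposition~\ref{inner product for a line} and then count $\I_L$ exactly. Since $\lambda^{-1}(t) = 1$ for every $t \in T$ when $\lambda$ is trivial, that proposition gives
\[
\langle L_\lambda, L_\lambda \rangle = \frac{1}{|T|}\left(2 - d + |\I_L|\right),
\]
so it suffices to show $|\I_L| \neq d - 2$; I will in fact identify $|\I_L|$ on the nose. Here $|T| = d^3$ since $T = \mu_d^4/\mu_d$.

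Next I would read off $\I_L$ from Proposition~\ref{intersection summary prop}. By definition $\I_L$ consists of those $t$ with $tL \neq L$ and $tL \cap L \neq \emptyset$, which by that proposition (and its ``no repetition'' clause) is the disjoint union of the $t$ of type~(2) and the $t$ of type~(3); type~(1), the identity $[1:1:1:1]$, is excluded because there $tL = L$, and it is already accounted for by the $2-d$ self-intersection term. For type~(2): there are four positions for the one non-identity coordinate and $d-1$ possible values for it, and distinct such tuples represent distinct elements of $T$ because scaling a representative with three coordinates equal to $1$ by a nontrivial diagonal element destroys that feature; hence $4(d-1) = 4q$ elements. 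For type~(3): the assignment $\gamma \mapsto t$ is injective by the ``no repetition'' statement (distinct values of $\gamma$ produce distinct lines $tL_{a,b}$ meeting $L_{a,b}$, hence distinct $t$), and its domain $\{\gamma \in \FF_{q^2} : \tr(\gamma) \neq 0\}$ has $q^2 - q$ elements because $\tr(\gamma) = \gamma + \gamma^q$ is a surjective $\FF_q$-linear map $\FF_{q^2} \to \FF_q$ with kernel of size $q$; hence $q^2 - q$ elements.

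Combining, $|\I_L| = 4q + (q^2 - q) = q^2 + 3q$, so that, using $d = q+1$,
\[
2 - d + |\I_L| = q^2 + 3q + 2 - (q+1) = q^2 + 2q + 1 = d^2,
\]
and therefore $\langle L_\lambda, L_\lambda \rangle = d^2/|T| = d^2/d^3 = 1/d \neq 0$, which is the claim.

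The computation is essentially bookkeeping layered on Proposition~\ref{intersection summary prop}; the only point demanding care is that the type~(2) and type~(3) families are genuinely disjoint and that the counts within each family involve no hidden coincidences among representatives of $T$, and both of these are exactly what the ``no repetition'' clause of Proposition~\ref{intersection summary prop} provides. As an independent sanity check one can instead identify $V_\lambda$ for trivial $\lambda$ with $\QQ(\zeta_d)$ times the hyperplane class $H$, which satisfies $H^2 = d$ and $L \cdot H = 1$; then $L_\lambda = \tfrac1d H$ and $\langle L_\lambda, L_\lambda\rangle = \tfrac1d$, in agreement with the count above.
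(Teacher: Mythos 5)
Your proof is correct and follows exactly the paper's route: the paper's own argument is just ``combine Proposition~\ref{inner product for a line} with Proposition~\ref{intersection summary prop},'' and you carry out the implicit bookkeeping, getting $|\I_L| = 4q + (q^2-q)$ and hence $\langle L_\lambda, L_\lambda\rangle = d^2/d^3 = 1/d \neq 0$. Your hyperplane-class sanity check ($L_\lambda = \tfrac{1}{d}H$, so $\langle L_\lambda,L_\lambda\rangle = \tfrac{1}{d}$) is a nice independent confirmation of the same value.
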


\begin{proof}
This follows immediately from Proposition~\ref{inner product for a line} and Proposition~\ref{intersection summary prop}.  
\end{proof}

\section{A family of multiplicative character sums}

Let $L_{a,b}$ denote a line as in Definition~\ref{standard line def} and let $v$ denote its image in $\NS(\mathcal{F}_d)$.  We will identify characters $\lambda \in T^*$ corresponding to tuples $(i,i,i,*)$ such that $\langle v_{\lambda}, v_{\lambda} \rangle \neq 0$.  In this section we work in slightly more generality and consider tuples $(i_0,i_1,i_2,i_3)$ with all entries non-zero.  We analyze the relevant inner products using multiplicative character sums.  When the tuple has the form $(i,i,i,*)$, the corresponding character sum has the form considered in the introduction; see Theorem~\ref{the character sum} below.

\begin{rmk} \label{nontrivial remark}
For the rest of this section, we assume $\lambda \in T^*$ is a \emph{non-trivial} character appearing in the decomposition of $\NS(\mathcal{F}_d) \otimes_{\ZZ} \QQ(\zeta_d)$.  In other words, we assume that $\lambda$ corresponds to a tuple $(i_0, i_1, i_2, i_3)$ with \emph{non-zero} entries.  
\end{rmk}

\begin{lem} \label{sum of -4}
Let $\I_L^0$ denote the set of $t \in T$ appearing in Proposition~\ref{intersection summary prop}(\ref{three entries}), and let $\lambda$ be a character as in Remark~\ref{nontrivial remark}.  Then
\[
\sum_{t \in \I_L^0} \lambda^{-1}(t) = -4.
\]
\end{lem}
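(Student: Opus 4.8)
The set $\I_L^0$ consists, by Proposition~\ref{intersection summary prop}(\ref{three entries}), of the elements $t \in T$ having a representative in which exactly three of the four coordinates equal $1$. For each position $j \in \{0,1,2,3\}$, the elements whose non-identity entry sits in the $j$-th slot form a coset of the image of $\mu_d$ in $T$: writing such a $t$ as, say for $j=3$, $[1:1:1:\zeta]$ with $\zeta \in \mu_d$ ranging over all $d$-th roots of unity, we get $d$ such elements. But $t = [1:1:1:1]$ is excluded (it is the identity, not in $\I_L^0$), so each position contributes $d-1$ genuine elements of $\I_L^0$; to be careful about overcounting across positions, note that an element with non-identity entry in position $j$ and an element with non-identity entry in position $j' \neq j$ coincide in $T$ only if both are the identity, which is excluded, so the four families are disjoint and $|\I_L^0| = 4(d-1)$, consistent with Proposition~\ref{intersection summary prop}'s ``no repetition'' clause.

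First I would fix the representative of $\lambda$ as a tuple $(i_0,i_1,i_2,i_3)$ with $\sum i_j \equiv 0 \bmod d$ and all $i_j \not\equiv 0$, and evaluate $\lambda^{-1}$ on the element $[1:\cdots:\zeta:\cdots:1]$ with $\zeta$ in position $j$: using the pairing of Definition~\ref{T* def}, this is $\nu(\zeta)^{-i_j}$. Summing over the $d-1$ choices $\zeta \in \mu_d \setminus \{1\}$ that actually lie in $\I_L^0$, and using that $\nu$ carries $\mu_d$ bijectively onto the $d$-th roots of unity in $\CC$, the $j$-th family contributes
\[
\sum_{\zeta \in \mu_d, \zeta \neq 1} \nu(\zeta)^{-i_j} = \left( \sum_{\eta^d = 1} \eta^{-i_j} \right) - 1 = -1,
\]
where the full sum over all $d$-th roots of unity vanishes precisely because $i_j \not\equiv 0 \bmod d$. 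Adding the four positions then gives $\sum_{t \in \I_L^0} \lambda^{-1}(t) = 4 \cdot (-1) = -4$, as claimed.

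The one subtlety worth spelling out — and the only place the argument could go wrong — is making sure the four single-coordinate families are exactly the set $\I_L^0$ with no double counting and no stray identity element; this is already guaranteed by the final sentence of Proposition~\ref{intersection summary prop} (``there is no repetition among the values of $t \in T$''), so in the write-up I would simply cite that. Everything else is the standard character-orthogonality fact $\sum_{\eta^d = 1} \eta^m = 0$ for $d \nmid m$, applied four times with $m = -i_0, -i_1, -i_2, -i_3$ respectively. No serious obstacle is expected; the lemma is essentially a bookkeeping computation once the structure of $\I_L^0$ from the previous section is in hand.
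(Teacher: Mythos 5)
Your proposal is correct and follows essentially the same route as the paper: partition $\I_L^0$ into the four families according to which coordinate is non-trivial, and for each position $j$ use $i_j \not\equiv 0 \bmod d$ to evaluate the character sum over $\mu_d \setminus \{1\}$ as $-1$, giving $4 \cdot (-1) = -4$. Your extra care about disjointness of the families and exclusion of the identity is implicit in the paper's argument and the ``no repetition'' clause of Proposition~\ref{intersection summary prop}, so no changes are needed.
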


\begin{proof}
Consider the set $S$ consisting of $t \in \I_L^0$ which are non-trivial in the $j$-th component.  Assume $\lambda^{-1}$ corresponds to the tuple $(i_0, i_1, i_2, i_3)$.  We are assuming that $i_j \neq 0$.  Then 
\[
\sum_{s \in S} \lambda^{-1}(s) = \sum_{n = 1}^{d-1} \zeta_d^{ni_j} = -1.
\] 
Because there are four choices of $j$, the result follows.
\end{proof}

The following is the main result of this section, as it relates our inner products to certain multiplicative character sums.

\begin{thm} \label{the character sum}
Take $\lambda \in T^*$ corresponding to a tuple $(i_0, i_1, i_2, i_3)$ as in Remark~\ref{nontrivial remark} and take $a,b$ as in Definition~\ref{standard line def}.  Let $v$ denote the class of $L_{a,b}$ in $\mathcal{F}_d$.  Then
\[
d^3 \langle v_{\lambda}, v_{\lambda} \rangle = -2q + \sum_{x \in \FF_{q^2}} \chi\left( x^{i_0} (x+1)^{i_1} (x + b^2)^{i_2}\right),
\]
where $\chi: \FF_{q^2}^* \rightarrow \CC^*$ is a fixed multiplicative character of order~$d$, extended to all of $\FF_{q^2}$ by setting $\chi(0) = 0$.  
\end{thm}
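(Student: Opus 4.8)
The plan is to start from the formula in Proposition~\ref{inner product for a line}, namely
\[
\langle L_\lambda, L_\lambda \rangle = \frac{1}{|T|}\left(2 - d + \sum_{t \in \I_L}\lambda^{-1}(t)\right),
\]
and split the sum over $\I_L$ into the contribution of the ``exactly three entries equal to~$1$'' elements, which by Lemma~\ref{sum of -4} equals $-4$ (using that $\lambda$ is non-trivial with all entries non-zero, as in Remark~\ref{nontrivial remark}), and the contribution of the elements $t$ parametrized by $\gamma \in \FF_{q^2}$ with $\tr(\gamma)\neq 0$ as in Proposition~\ref{intersection summary prop}(3). Since $|T| = d^3$ and $2 - d - 4 = -(d+2)$, after multiplying by $d^3$ the whole identity reduces to showing that
\[
\sum_{\substack{\gamma \in \FF_{q^2} \\ \tr(\gamma)\neq 0}} \lambda^{-1}(t_\gamma) = -2q + 2 - d - \sum_{x\in\FF_{q^2}}\chi\!\left(x^{i_0}(x+1)^{i_1}(x+b^2)^{i_2}\right) + (d+2),
\]
i.e.\ that the $\gamma$-sum equals $-2q - \sum_x \chi(\cdots)$; I will need to track the exact bookkeeping of the ``$2-d$'' term and possibly a correction for small exceptional sets, but the essential content is a clean identification of the $\gamma$-sum.

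The heart of the argument is therefore to compute $\lambda^{-1}(t_\gamma)$ explicitly in terms of $\gamma$. For this I would go back to Definition~\ref{standard line def} and the proof of Lemma~\ref{parametrization of difficult intersections}: the intersection of $L_{a,b}$ and $t_\gamma L_{a,b}$ occurs at the images of $[\gamma:1]$ on the first line and $[-\pi(\gamma):1]$ on the second, so the point on $\mathcal{F}_d$ is $[\gamma:1:a\gamma+b:b\gamma+a]$ hit two ways, and $t_\gamma$ is the ratio of the two coordinate vectors $[\gamma:1:a\gamma+b:b\gamma+a]$ and $[-\pi(\gamma):1:-a\pi(\gamma)+b:-b\pi(\gamma)+a]$. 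Thus $t_\gamma = [\,-\gamma/\pi(\gamma) : 1 : (a\gamma+b)/(-a\pi(\gamma)+b) : (b\gamma+a)/(-b\pi(\gamma)+a)\,]$, and each of these ratios is a $d$-th root of unity by Lemma~\ref{dth root lemma}. Applying $\lambda^{-1}$, which corresponds to the tuple $(i_0,i_1,i_2,i_3)$ via $\nu$, gives
\[
\lambda^{-1}(t_\gamma) = \nu\!\left(\tfrac{-\gamma}{\pi(\gamma)}\right)^{i_0}\nu(1)^{i_1}\nu\!\left(\tfrac{a\gamma+b}{b-a\pi(\gamma)}\right)^{i_2}\nu\!\left(\tfrac{b\gamma+a}{a - b\pi(\gamma)}\right)^{i_3}.
\]
Now I would use that for a $d$-th root of unity $z$, $\pi(z) = z^{-1}$, to rewrite each denominator: $\pi(a\gamma+b) = a\pi(\gamma) - b = -(b - a\pi(\gamma))$ since $\pi(a)=a$, $\pi(b)=-b$, so $(a\gamma+b)/(b-a\pi(\gamma)) = -(a\gamma+b)/\pi(a\gamma+b)$, and $\nu$ of this is $-\chi(a\gamma+b)^{\,?}$ type expression; more precisely $\nu(z) = \chi(w)$ when $z = w/\pi(w) = w^{q+1}/\ldots$, and the key is that $\chi$ has order exactly $d = q+1$, so $\chi(w) = \chi(w)$ depends only on $w$ up to $\FF_q^*$-multiples and $\chi(w/\pi(w)) = \chi(w)^2$-ish. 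The correct normalization is $\nu(x/\pi(x)) = \chi(x)^{?}$; after pinning down the exact exponent (this is where I must be careful and consistent with the choice of $\nu$ and $\chi$), each factor becomes a power of $\chi$ of a linear-in-$\gamma$ form, and the product collapses — using $\sum i_j \equiv 0 \bmod d$ and rescaling $\gamma \mapsto$ suitable variable — to $\chi\big(\text{(const)}\cdot \gamma^{i_0}(\gamma + c_1)^{i_1}(\gamma+c_2)^{i_2}\big)$ for appropriate constants; a change of variables ($\gamma \mapsto$ a multiple of $x$) normalizes $c_1 = 1$ and $c_2 = b^2$, using $a^2 + 1 = b^2$.

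I expect the main obstacle to be exactly this normalization and the handling of the boundary: matching the $\tr(\gamma)\neq 0$ condition on the $\gamma$-side with the $\FF_{q^2}$-sum over all $x$ (including $x$ with $\tr \neq 0$ in the new variable), and accounting for the values $x = 0$, $x = -1$, $x = -b^2$ where $\chi$ vanishes versus where the corresponding $t_\gamma$ degenerates (these should be precisely the $\gamma$ that make $L_{a,b}$ and $t_\gamma L_{a,b}$ meet in one point rather than two, already subtracted via Lemma~\ref{sum of -4}), plus the $\gamma$ with $\tr(\gamma) = 0$ contributing the discrepancy that produces the $-2q$ rather than $-\#\{\gamma\}$. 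Concretely: $\#\{\gamma : \tr(\gamma) = 0\} = q$, and since $\chi$ is real-valued (order dividing $q+1$) the sum $\sum_{\tr\gamma = 0}\chi(\cdots)$ should evaluate to something like $-q$ or be absorbed, giving the clean $-2q$. Once the dictionary $\gamma \leftrightarrow x$ and the exponent normalization for $\nu$ versus $\chi$ are fixed, the rest is the bookkeeping of a geometric-series sum already done in Lemma~\ref{sum of -4} together with direct substitution, so I would present those parts briefly and concentrate the written proof on the computation of $\lambda^{-1}(t_\gamma)$ and the change of variables.
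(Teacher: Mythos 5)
Your outline follows the same route as the paper: Proposition~\ref{inner product for a line} plus Lemma~\ref{sum of -4} reduce everything to $\sum_{\tr(\gamma)\neq 0}\lambda^{-1}(t_\gamma)$, the element $t_\gamma$ is read off from Lemma~\ref{parametrization of difficult intersections}, and $\chi(\gamma):=\nu(\gamma^{q-1})$ is the order-$d$ character. But the steps you defer are the actual content of the proof, and the concrete plans you give for two of them would not work as stated. The essential one is the exponent pattern: since the coordinate of $t_\gamma$ that is normalized to $1$ is the one paired with $i_1$, the $\gamma$-sum you obtain is $\sum_\gamma (-1)^{i_0+i_2}\chi\left(\gamma^{i_0}(a\gamma+b)^{i_2}(a+b\gamma)^{i_3}\right)$, with exponents $i_0,i_2,i_3$ and no $i_1$. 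A substitution of the form ``$\gamma\mapsto$ a multiple of $x$'' can only move the roots; it can never trade $i_3$ for $i_1$. To reach the stated form $x^{i_0}(x+1)^{i_1}(x+b^2)^{i_2}$ the paper uses the non-affine substitution $x\mapsto\frac{-x-b^2}{x+1}$ (which exchanges $0\leftrightarrow-b^2$ and $-1\leftrightarrow\infty$) together with inserting the factor $\chi\left((x+1)^{i_0+i_1+i_2+i_3}\right)=1$; this is exactly where $\sum_j i_j\equiv 0\bmod d$ enters, and your sketch contains no mechanism playing this role.

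The boundary bookkeeping is also not as you guess. The values with $\tr(\gamma)=0$ are precisely $\gamma=\beta b$, $\beta\in\FF_q$; using $\chi(b)=-1$ and $\chi=1$ on $\FF_q^*$, three of them make the argument of $\chi$ vanish and the remaining $q-3$ each contribute $+1$ after the $(-1)^{i_0+i_2}$ prefactor, so completing the sum costs $q-3$ and the constant arises as $-2-d-(q-3)=-2q$. It is not the case that the $\tr(\gamma)=0$ portion is roughly $-q$, and $\chi$ itself is not real-valued (its order is $d>2$; only the completed sums $S_{b^2,\hat i}$ are real, and that uses $b^2\in\FF_q$, proved separately). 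Finally, the normalization you flag should come out as $\nu(\gamma/\pi(\gamma))=\nu(\gamma^{1-q})=\chi(\gamma)^{-1}$, not a square of $\chi$, and the signs from $\pi(b)=-b$ (i.e.\ $\nu(-1)=-1$) must be carried along, since they are exactly what makes the $\tr(\gamma)=0$ terms contribute $+1$. So the strategy matches the paper's, but as written the argument is a plan whose two critical computations are missing or misdescribed.
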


\begin{proof}
By Proposition~\ref{inner product for a line}, we have
\[
d^3 \langle v_\lambda, v_\lambda \rangle= 2-d+\sum_{t\in \I_L}\lambda^{-1}(t),
\]
where $\I_L$ denotes the set of elements $t \in T$ such that $\langle L_{a,b}, tL_{a,b} \rangle = 1$.  By Lemma~\ref{sum of -4} and Proposition~\ref{intersection summary prop}, this reduces to
\[
d^3 \langle v_\lambda, v_\lambda \rangle= -2-d+\sum_{\substack{\gamma \in \FF_{q^2} \\ \tr{\gamma} \neq 0}} \lambda^{-1}(t_{\gamma}),
\]
where $t_{\gamma}$ is the unique element $t \in T$ corresponding to $\gamma \in \FF_{q^2}$, $\tr(\gamma) \neq 0$, as in Lemma~\ref{parametrization of difficult intersections}.  The proof of that lemma gives an explicit description of $t_{\gamma}$ in terms of $\gamma$:
\[
t_{\gamma} = [t_0 : t_1 : t_2 : t_3] = \left[ \frac{\gamma}{-\pi(\gamma)}: 1 : \frac{a\gamma + b}{-a \pi(\gamma) + b} : \frac{a + b\gamma}{a-b\pi(\gamma)}\right].
\]
Using that $\pi(a) = a$ and $\pi(b) = -b$, we can express this more succinctly as
\[
t_{\gamma}^{-1} = \left[-(\gamma^{q-1}) : 1 : -(a\gamma + b)^{q-1} : (a+b\gamma)^{q-1}\right]. 
\]
Recall $\nu$ from Definition~\ref{T* def}.  
Our expression becomes
\[
d^3 \langle v_\lambda, v_\lambda \rangle= -2-d+\sum_{\substack{\gamma \in \FF_{q^2} \\ \tr{\gamma} \neq 0}} (-1)^{i_0 + i_2} \nu(\gamma^{q-1})^{i_0} \nu((a\gamma + b)^{q-1})^{i_2} \nu((a + b\gamma)^{q-1})^{i_3}.
\]
Next note that $\gamma \mapsto \nu(\gamma^{q-1})$ is a multiplicative character of order $d$; we denote it by $\chi$ (and extend it to all of $\FF_{q^2}$ as in the statement of the theorem).  Then the above expression becomes
\[
d^3 \langle v_\lambda, v_\lambda \rangle= -2-d+\sum_{\substack{\gamma \in \FF_{q^2} \\ \tr{\gamma} \neq 0}} (-1)^{i_0 + i_2} \chi\left(\gamma^{i_0} (a\gamma + b)^{i_2} (a + b\gamma)^{i_3}\right).
\]

This latter sum becomes more transparent if we also include values of $\gamma$ for which $\tr(\gamma) = 0$: these are exactly the values of the form $\gamma = \beta b$, where $b$ is our usual $b$ and $\beta \in \FF_q$.  Using the fact that $\chi(\alpha) = 1$ for $\alpha \in \FF_{q}^*$ (because $\chi$ has order $d$) and $\chi(b) = -1$ (because $b$ is not a $d$-th power but $b^2$ is), we find 
\[
\chi\left((\beta b)^{i_0} (a\beta b + b)^{i_2} (a + b\beta b)^{i_3}\right) = (-1)^{i_0 + i_2} \text{ or }  0,
\]
with the value of 0 occurring only for the three distinct values $\beta = 0, \frac{-1}{a}, \frac{-a}{b^2}$.  Thus our expression for the inner product can be simplified further to
\begin{align*}
d^3 \langle v_\lambda, v_\lambda \rangle &= -2-d-(q-3) + \sum_{x \in \FF_{q^2}} (-1)^{i_0 + i_2} \chi\left(x^{i_0} (ax + b)^{i_2} (a + bx)^{i_3}\right) \\
&= -2q + \sum_{x \in \FF_{q^2}} (-1)^{i_0 + i_2} \chi\left(x^{i_0} (ax + b)^{i_2} (a + bx)^{i_3}\right).\\
\intertext{Replacing $x$ by $\frac{bx}{a}$ and using again that $\chi(b) = -1$ and $\chi(\alpha) = 1$ for $\alpha \in \FF_{q}$ yields}
&= -2q + \sum_{x \in \FF_{q^2}} (-1)^{i_0 + i_2} \chi\left(\left(\frac{bx}{a}\right)^{i_0} (bx + b)^{i_2} \left(a + \frac{b^2x}{a}\right)^{i_3}\right)\\
&= -2q + \sum_{x \in \FF_{q^2}} \chi\left(x^{i_0} (x + 1)^{i_2} \left(\frac{a^2}{b^2} + x\right)^{i_3}\right)\\
&= -2q + \sum_{x \in \FF_{q^2}} \chi\left(x^{i_0} (x + 1)^{i_2} \left(1 - b^{-2} + x\right)^{i_3}\right)\\
\intertext{and then replacing $x$ by $-x-1$ yields}
&= -2q + \sum_{x \in \FF_{q^2}} \chi\left((-x-1)^{i_0} (-x)^{i_2} \left(- b^{-2} + -x\right)^{i_3}\right)\\
&= -2q + \sum_{x \in \FF_{q^2}} \chi\left((b^2x+b^2)^{i_0} (b^2x)^{i_2} \left(1 + b^2x\right)^{i_3}\right)\\
\intertext{and finally replacing $b^2 x$ by $x$ yields}
d^3 \langle v_\lambda, v_\lambda \rangle &= -2q + \sum_{x \in \FF_{q^2}} \chi\left((x+b^2)^{i_0} (x)^{i_2} \left(1 + x\right)^{i_3}\right).\\
\intertext{This expression has the desired form, although the exponents are incorrect.  To rearrange them, we perform one more substitution and replace $x$ by $\frac{-x-b^2}{x+1}$.  This transformation was chosen because it exchanges $-b^2$ and $0$, and also exchanges $-1$ and $\infty$.  Then the above becomes }
& = -2q + \sum_{x \in \FF_{q^2}} \chi\left(\left(\left(\frac{-x-b^2}{x+1}\right)+b^2\right)^{i_0} \left(\frac{-x-b^2}{x+1}\right)^{i_2} \left(1 + \left(\frac{-x-b^2}{x+1}\right)\right)^{i_3}\right) \\
\intertext{and multiplying by $1 = \chi\left((x+1)^{i_0 + i_1 + i_2 + i_3}\right)$ yields}
& = -2q + \sum_{x \in \FF_{q^2}} \chi\left(\left(-x-b^2+b^2(x+1)\right)^{i_0} (x+1)^{i_1} \left(-x-b^2\right)^{i_2} \left(x+1 + -x-b^2\right)^{i_3}\right) \\
& = -2q + \sum_{x \in \FF_{q^2}} \chi\left(\left((b^2+1)x\right)^{i_0} (x+1)^{i_1} \left(-x-b^2\right)^{i_2} \left(1 -b^2\right)^{i_3}\right) \\
d^3 \langle v_\lambda, v_\lambda \rangle &= -2q + \sum_{x \in \FF_{q^2}} \chi\left(x^{i_0} (x+1)^{i_1} \left(x+b^2\right)^{i_2}\right),
\end{align*}
as required.
\end{proof}

\begin{defn} \label{charsum defn}
Let $\chi: \FF_{q^2} \rightarrow \CC$ denote (the extension of) a multiplicative character as in Theorem~\ref{the character sum}.  We write $S_{b^2,\hat{i}}$ (or $S_{b^2}$ or $S_{\hat{i}}$ or simply $S$) for the character sum
\[
S_{b^2,\hat{i}} := \sum_{x \in \FF_{q^2}} \chi\left( x^{i_0} (x+1)^{i_1} (x + b^2)^{i_2}\right).
\]
\end{defn}

\begin{cor} \label{inner product cor}
Take notation as in Theorem~\ref{the character sum} and as in Definition~\ref{charsum defn}.  If $S \neq 2q$, then $L_{a,b}$, considered as an element of $\NS(\mathcal{F}_d)$, has non-zero projection to $\left(\NS(\mathcal{F}_d) \otimes_{\ZZ} \QQ(\zeta_d)\right)_{\lambda}$.  
\end{cor}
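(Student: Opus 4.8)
The plan is to derive this corollary as an immediate consequence of Theorem~\ref{the character sum} combined with the non-degeneracy of the intersection pairing. Recall that the strategy outlined at the end of Section~\ref{representation theory section} is precisely to use the observation that if $\langle v_\lambda, v_\lambda \rangle \neq 0$ then $v_\lambda \neq 0$, i.e.\ $v$ has non-zero projection to $V_\lambda$. So it suffices to show $\langle v_\lambda, v_\lambda \rangle \neq 0$ under the hypothesis $S \neq 2q$.

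First I would apply Theorem~\ref{the character sum}, which gives the exact identity
\[
d^3 \langle v_\lambda, v_\lambda \rangle = -2q + \sum_{x \in \FF_{q^2}} \chi\left( x^{i_0}(x+1)^{i_1}(x+b^2)^{i_2} \right) = -2q + S_{b^2, \hat{i}},
\]
using the notation of Definition~\ref{charsum defn}. Since $d \neq 0$ in $\QQ(\zeta_d)$, the quantity $\langle v_\lambda, v_\lambda \rangle$ vanishes if and only if $S_{b^2,\hat i} = 2q$. Hence the hypothesis $S \neq 2q$ forces $\langle v_\lambda, v_\lambda \rangle \neq 0$, and therefore $v_\lambda \neq 0$. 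But $v_\lambda$ is exactly the projection of $v = [L_{a,b}]$ to $\left(\NS(\mathcal{F}_d) \otimes_{\ZZ} \QQ(\zeta_d)\right)_\lambda$ by Definition~\ref{proj to W}, so this is precisely the claimed conclusion.

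There is essentially no obstacle here: the corollary is a one-line deduction from the theorem just proved, using only that $d$ is invertible in the coefficient ring and that a vector with non-zero self-pairing is non-zero. The only point worth stating carefully is the logical contrapositive packaging — we are given $S \neq 2q$ and must produce non-vanishing of the projection — but this is transparent from the displayed identity. I would keep the proof to two or three sentences, essentially: ``By Theorem~\ref{the character sum}, $d^3\langle v_\lambda,v_\lambda\rangle = -2q + S$; if $S \neq 2q$ this is non-zero, so $v_\lambda \neq 0$, which by definition of $v_\lambda$ means $L_{a,b}$ has non-zero projection to $\left(\NS(\mathcal{F}_d)\otimes_\ZZ\QQ(\zeta_d)\right)_\lambda$.''
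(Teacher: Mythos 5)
Your proposal is correct and is exactly the argument the paper intends: the corollary follows immediately from the identity $d^3\langle v_\lambda,v_\lambda\rangle = -2q + S$ of Theorem~\ref{the character sum} together with the observation at the start of Section~\ref{intersection section} that $\langle v_\lambda,v_\lambda\rangle \neq 0$ forces $v_\lambda \neq 0$. The paper leaves this as an unwritten one-line deduction, and your two-to-three sentence version is the same reasoning.
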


\begin{prop}
For any $b^2, \hat{i}$, the character sum $S_{b^2, \hat{i}}$ is real.  
\end{prop}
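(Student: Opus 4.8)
The plan is to show that $S_{b^2,\hat i}$ equals its own complex conjugate. The key elementary fact is that, since $\chi$ takes values in the group $\mu_d \subseteq \CC^*$ of $d$-th roots of unity and $d = q+1$, for every $y \in \FF_{q^2}$ one has
\[
\overline{\chi(y)} \;=\; \chi(y)^{-1} \;=\; \chi(y)^{q} \;=\; \chi\!\left(y^{q}\right) \;=\; \chi\big(\pi(y)\big),
\]
where $\pi$ is the $q$-power Frobenius: for $y \neq 0$ this uses $\chi(y)^{q+1} = 1$ together with $\chi$ being multiplicative, and for $y = 0$ it is trivial. (This is the character-theoretic counterpart of Lemma~\ref{dth root lemma}.) Thus $\overline{\chi} = \chi \circ \pi$ as functions $\FF_{q^2} \to \CC$.

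First I would note that $\pi$ fixes the constants appearing inside the character: clearly $\pi(1) = 1$, and $\pi(b^2) = b^2$ because $b^2 = a^2 + 1 \in \FF_q$ by the defining relation of Definition~\ref{standard line def}. Since $\pi$ is a field automorphism, it follows that for every $x \in \FF_{q^2}$,
\[
\pi\!\left(x^{i_0}(x+1)^{i_1}(x+b^2)^{i_2}\right) \;=\; \pi(x)^{i_0}\,(\pi(x)+1)^{i_1}\,(\pi(x)+b^2)^{i_2}.
\]

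Combining these two observations,
\[
\overline{S_{b^2,\hat i}} \;=\; \sum_{x \in \FF_{q^2}} \overline{\chi\!\left(x^{i_0}(x+1)^{i_1}(x+b^2)^{i_2}\right)} \;=\; \sum_{x \in \FF_{q^2}} \chi\!\left(\pi(x)^{i_0}(\pi(x)+1)^{i_1}(\pi(x)+b^2)^{i_2}\right).
\]
Finally, since $\pi$ is a bijection of $\FF_{q^2}$, reindexing the last sum via the substitution $x \mapsto \pi(x)$ returns $S_{b^2,\hat i}$ on the nose, so $\overline{S_{b^2,\hat i}} = S_{b^2,\hat i}$ and the sum is real.

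There is no serious obstacle here; the only points requiring care are the identity $\overline{\chi} = \chi \circ \pi$ (which needs the order of $\chi$ to divide $d = q+1$) and the observation that $b^2 \in \FF_q$, so that $\pi$ leaves the shift $b^2$ unchanged.
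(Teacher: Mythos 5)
Your proof is correct and follows essentially the same route as the paper: identify $\overline{\chi}$ with $\chi^q = \chi\circ\pi$, use that $b^2\in\FF_q$ so the Frobenius fixes the shifts, and reindex the sum by the bijection $\pi$. The paper states this more tersely, but the argument is the same.
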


\begin{proof}
Replacing $\chi$ with its complex conjugate is equivalent to replacing $\chi$ by its $q$-th power, because the outputs of $\chi$ are either $0$ or (not necessarily primitive) $d$-th roots of unity.  Because $b^2 \in \FF_{q}$, this is equivalent to summing over $x^q$ for $x \in \FF_{q^2}$, and thus the sum is the same. 
\end{proof}

Recall that in Remark~\ref{nontrivial remark} we required that $i_0, i_1, i_2, i_0 + i_1 + i_2 \not\equiv 0 \bmod d$.  

\begin{prop}
We have $-2q \leq S_{b^2, \hat{i}} \leq 2q$.  
\end{prop}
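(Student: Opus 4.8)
The plan is to reduce the bound $-2q \le S_{b^2,\hat i} \le 2q$ to Weil's theorem for multiplicative character sums, applied to the separable polynomial $f(x) = x^{i_0}(x+1)^{i_1}(x+b^2)^{i_2}$ over $\FF_{q^2}$. First I would recall the exact statement being used: for a nontrivial multiplicative character $\chi$ of $\FF_{q^2}$ of order $d$ and a polynomial $g \in \FF_{q^2}[x]$ which is \emph{not} of the form $c\cdot h(x)^d$ and has $m$ distinct roots, one has $\bigl|\sum_{x\in\FF_{q^2}} \chi(g(x))\bigr| \le (m-1)\sqrt{q^2} = (m-1)q$; this is \cite[Theorem~5.41]{LN97}, which the introduction already invokes in the quadratic case. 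Here $g(x) = x^{i_0}(x+1)^{i_1}(x+b^2)^{i_2}$ has at most three distinct roots ($0$, $-1$, $-b^2$), so the bound will be $2q$ once I check the hypotheses apply.

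The two hypotheses to verify are: (1) $g$ genuinely has three \emph{distinct} roots $0,-1,-b^2$, and (2) $\chi\circ g$ is not identically governed by a $d$-th power, i.e. $g$ is not $d$-power-equivalent to a constant times a perfect $d$-th power (so that $\chi(g(x))$ is genuinely a nontrivial character of the argument). For (1): since $b \in \FF_{q^2}\setminus\FF_q$ with $b^2 \in \FF_q$, we have $b^2 \neq 0$, and I must also argue $b^2 \neq 1$ — if $b^2 = 1$ then $b \in \FF_q$ since $b=\pm1$, contradicting $b\notin\FF_q$; hence $0,-1,-b^2$ are three distinct elements, and $g$ is a separable-up-to-multiplicity polynomial with exactly three distinct roots. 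For (2): the exponents $i_0,i_1,i_2$ are each reduced mod $d$ and, by the hypothesis recalled just before the proposition (from Remark~\ref{nontrivial remark}), none of $i_0,i_1,i_2$ is $\equiv 0 \bmod d$; since $\chi$ has order exactly $d$, the character $x \mapsto \chi(x^{i_0})$ is already nontrivial, and more importantly $g(x)$ is not a constant times a $d$-th power of a rational/polynomial function because its divisor $i_0\cdot(0) + i_1\cdot(-1) + i_2\cdot(-b^2) + (-i_0-i_1-i_2)\cdot\infty$ on $\PP^1$ is not $d$ times a divisor — indeed $i_0 \not\equiv 0$ already prevents this. This is exactly the content of the standard hypothesis in Weil's bound, so it applies with $m = 3$.

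Given these checks, Weil's theorem yields $|S_{b^2,\hat i}| \le (3-1)q = 2q$, i.e. $-2q \le S_{b^2,\hat i} \le 2q$, which is the assertion. The main obstacle — really the only subtle point — is confirming that the nontriviality hypothesis of Weil's theorem is met, i.e. that $x^{i_0}(x+1)^{i_1}(x+b^2)^{i_2}$ is not of the shape $c\cdot h(x)^d$; but this follows immediately from $0 < i_0 < d$ (reducing the exponents to representatives in $\{1,\dots,d-1\}$ using the hypothesis from Remark~\ref{nontrivial remark}), since then the multiplicity of the root $0$ is not divisible by $d$. I expect the proof to be short: one sentence invoking \cite[Theorem~5.41]{LN97}, one sentence disposing of the distinctness of $0,-1,-b^2$ via $b\notin\FF_q$, and one sentence on the nontriviality hypothesis via $i_0\not\equiv 0\bmod d$.
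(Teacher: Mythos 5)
Your proof is correct and takes essentially the same route as the paper, which simply cites the Weil bound \cite[Theorem~5.41]{LN97}; your additional verification of the hypotheses (distinctness of $0,-1,-b^2$ via $b\notin\FF_q$, and non-$d$-th-power via $i_0\not\equiv 0\bmod d$ from Remark~\ref{nontrivial remark}) is a sound filling-in of details the paper leaves implicit.
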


\begin{proof}
This follows immediately from the Weil bound for multiplicative character sums \cite[Theorem~5.41]{LN97}.
\end{proof}

\section{Proof of Theorem~\ref{main theorem 1}}

\begin{prop} \label{mod 3 prop}
Let $\hat{i} = (i,i,i,*)$ denote a tuple with $i \neq 0$ and let $a,b$ be as in Theorem~\ref{main theorem 1}.  Let $S$ denote the associated character sum as in Definition~\ref{charsum defn}.  Then
\[
S \neq 2q.
\]  
\end{prop}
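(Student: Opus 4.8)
The plan is to show that the character sum $S = \sum_{x \in \FF_{q^2}} \chi(x^i(x+1)^i(x+b^2)^i)$ cannot attain the Weil bound $2q$ when $\hat{i} = (i,i,i,*)$, $i \neq 0$, and $a = b^2$ with $b$ a primitive $12$-th root of unity. Since all three exponents are equal to $i$, we have $S = \sum_x \chi\bigl(\bigl(x(x+1)(x+b^2)\bigr)^i\bigr) = \sum_x \chi^i(f(x))$ where $f(x) = x(x+1)(x+b^2) \in \FF_q[x]$ (note $b^2 \in \FF_q$ since $b^{12}=1$ forces $b^2$ to be a primitive $6$-th root of unity, hence in $\FF_q$ when $q \equiv 7 \bmod 12$). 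So up to renaming $\chi^i$ as a character of order dividing $d$ (and greater than $2$, which must be checked from $i, 3i \not\equiv 0 \bmod d$), this is exactly the sum $S_c$ of Theorem~\ref{bound on N} with $c = b^2$. The strategy is therefore to compute $S_c$ explicitly for this special value of $c$, or at least to rule out the value $2q$.

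First I would set $c = b^2$ and observe that because $b$ is a primitive $12$-th root of unity, $c = b^2$ is a primitive $6$-th root of unity and $c - 1 = b^2 - 1$ satisfies $c^2 - c + 1 = 0$, i.e. $b^4 - b^2 + 1 = 0$ (the $12$-th cyclotomic polynomial evaluated at $b^2$); equivalently $c - 1 = c^{-1} \cdot (\text{something})$ — the key algebraic identities among $1, c, c-1$ that come from $b$ being a root of $\Phi_{12}$. The point is that the cubic $f(x) = x(x+1)(x+c)$ then has extra structure: I expect $f$ to be related to itself under a fractional-linear or affine change of variable (arising from the symmetry group of the four points $0, -1, -c, \infty$ — a cross-ratio being a primitive $6$-th root of unity is exactly the "equianharmonic" / extra-automorphism case). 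Concretely, there should be a Möbius transformation of order $3$ or $6$ permuting $\{0, -1, -c, \infty\}$, and pulling $S$ back along it (as was done repeatedly in the proof of Theorem~\ref{the character sum}, multiplying by $1 = \chi((\ast)^{i_0+i_1+i_2+i_3})$) should relate $S$ to a Jacobi-type sum or force a divisibility/congruence on $S$ incompatible with $S = 2q$.

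The cleanest route is probably the following: exploit that $S = -2q + d^3\langle v_\lambda, v_\lambda\rangle$ from Theorem~\ref{the character sum}, so $S = 2q$ is equivalent to $\langle v_\lambda, v_\lambda \rangle = 0$, i.e. to $v_\lambda = 0$ where $v$ is the class of $L_{a,b}$. So I would instead show directly that $v_\lambda \neq 0$ for all relevant $\lambda$ of the form $(i,i,i,d-3i)$, using the intersection-theoretic description in Proposition~\ref{inner product for a line}: $d^3\langle v_\lambda,v_\lambda\rangle = 2-d+\sum_{t\in \I_L}\lambda^{-1}(t)$, with $\I_L$ explicitly parametrized by Proposition~\ref{intersection summary prop}. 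When $b$ is a $12$-th root of unity, the parametrization $\gamma \mapsto t_\gamma = [-(\gamma^{q-1}): 1 : -(a\gamma+b)^{q-1}:(a+b\gamma)^{q-1}]$ should take especially rigid values, and the sum $\sum_{\gamma,\ \tr(\gamma)\neq 0}\lambda^{-1}(t_\gamma)$ can be evaluated — or bounded away from $d-2$ — by grouping the $\gamma$'s according to which coordinates of $t_\gamma$ are trivial, i.e. when $\gamma$, $a\gamma+b$, $a+b\gamma$ are themselves $d$-th roots of unity. The special choice of $b$ should make these loci large and their contributions computable.

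The main obstacle I anticipate is pinning down the exact arithmetic of the $12$-th root of unity $b$ inside $\FF_{q^2}$: one needs to know precisely which of $b$, $b^2$, $b^2-1$, $b^2+1$, etc., are $d$-th powers (equivalently, lie in $\FF_q$, by Lemma~\ref{dth root lemma} and $d = q+1$), and to use $q \equiv 7 \bmod 12$ to control these. Getting $\chi(b) = -1$, $\chi(b^2) = 1$, and the analogous values for $a\gamma + b$ at the finitely many exceptional $\gamma$, and then checking that the resulting character-sum (or intersection-number) identity genuinely excludes the top value $2q$ — rather than some smaller forbidden value — is where the real work lies; everything else is substitution of the type already carried out in the proof of Theorem~\ref{the character sum}.
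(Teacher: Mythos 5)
There is a genuine gap: you correctly sense the relevant structure (the cross-ratio of $0,-1,-b^2,\infty$ being special, an order-$3$ or $6$ symmetry that ``should force a divisibility/congruence on $S$ incompatible with $S=2q$''), but you never produce that symmetry or that congruence, and your fallback ``cleanest route'' does not work as stated. The paper's proof is exactly the congruence you anticipate, made explicit: with $\eta = b^2$ a primitive $6$-th root of unity in $\FF_q$ (so $\eta^2-\eta+1=0$), the affine map $x \mapsto -\eta(x+1)$ has order $3$, permutes the roots $0,-1,-\eta$ cyclically, and leaves the summand $\chi\bigl(x^i(x+1)^i(x+\eta)^i\bigr)$ unchanged because $\chi$ is trivial on $\FF_q^*$; grouping $\FF_{q^2}$ into orbits of size $3$ plus the unique fixed point $u=-\eta/(1+\eta)$, whose contribution is $\chi\bigl(f(u)^i\bigr)=1$ since $f(u)\in\FF_q^*$, yields $S \equiv 1 \bmod 3\ZZ[\zeta_d]$; and $q\equiv 7 \bmod 12$ gives $2q\equiv 2 \bmod 3$, so $S\neq 2q$. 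None of these steps appears in your write-up: you do not exhibit the map, do not verify invariance (which needs both $\eta^2-\eta+1=0$ and $\chi|_{\FF_q^*}=1$), do not compute the fixed-point contribution, and do not use the hypothesis $q\equiv 7\bmod 12$ beyond ensuring $b^2\in\FF_q$ --- yet that hypothesis is what makes $2q\not\equiv 1 \bmod 3$ and hence is the crux.

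The route you call cleanest --- returning to $d^3\langle v_\lambda,v_\lambda\rangle = 2-d+\sum_{t\in\I_L}\lambda^{-1}(t)$ and evaluating $\sum_{\gamma,\ \tr(\gamma)\neq 0}\lambda^{-1}(t_\gamma)$ by grouping the $\gamma$'s according to which coordinates of $t_\gamma$ are $d$-th roots of unity --- is essentially circular: Theorem~\ref{the character sum} already converts that intersection sum into the character sum $S$, so you would only be re-deriving $S$ in different clothing, and you give no mechanism for bounding it away from $2q$. You acknowledge this yourself (``where the real work lies''), so the decisive argument is missing rather than merely unpolished. As a smaller point, the reduction to a single character $\chi^i$ of order greater than $2$ is not needed and not automatic (when $i=d/2$ the character is quadratic); the mod-$3$ orbit argument above is uniform in $i\neq 0$ and avoids that case division.
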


\begin{proof}
Write $b^2 = \eta$, which is a primitive $6$-th root of unity in $\FF_{q}$.  We first check that if $y:= -\eta (x+1),$ then $\chi\left( x^{i} (x+1)^{i} (x + \eta)^{i}\right) = \chi\left( y^{i} (y+1)^{i} (y + \eta)^{i}\right)$ as follows:
\begin{align*}
\chi\left( y^{i} (y+1)^{i} (y + \eta)^{i}\right) &= \chi\left( (-\eta x-\eta)^{i} (-\eta x-\eta+1)^{i} (-\eta x-\eta + \eta)^{i}\right)\\
&= \chi\left( (x+1)^{i} (x+1+\eta^2)^{i} x^{i}\right) \\
\intertext{(using that $\chi$ is identically $1$ on $\FF_{q}^*$)}
&=\chi\left( (x+1)^{i} (x+\eta)^{i} x^{i}\right) 
\end{align*}
(using that $\eta^2 - \eta + 1 = 0$).  Considering the orbits of the action $x \mapsto -\eta(x+1)$ on $\FF_{q^2}$, and noting that $u := \frac{-\eta}{1+\eta}$ is its unique fixed point, we see that
\begin{align*}
S &\equiv \chi\left( u^{i} (u+1)^{i} (u + \eta)^{i}\right) \bmod 3\ZZ[\zeta_d] \\
&\equiv 1 \bmod 3\ZZ[\zeta_d].
\end{align*}
On the other hand, $q \equiv 7 \bmod 12$, so $2q \equiv 2 \bmod 3$, so we deduce in particular that $S \neq 2q$.
\end{proof}

To prove Theorem~\ref{main theorem 1}, we need only to collect the preceding results.  

\begin{proof}[Proof of Theorem~\ref{main theorem 1}]
As described at the end of Section~\ref{representation theory section}, it suffices to show that for each character of the form $(i,i,i,*)$, the line $L_{a,b}$ has non-zero projection to $\left(\NS(\mathcal{F}_d) \otimes_{\ZZ} \QQ(\zeta_d)\right)_{\lambda}$.  For the trivial character, this follows from Corollary~\ref{trivial character}.  For the remaining characters, this follows from Corollary~\ref{inner product cor} and Proposition~\ref{mod 3 prop}.  
\end{proof}

\section{Properties of the character sum} \label{props of char sum}

In this section we gather some preliminary results concerning the character sum $S$ which will be used to prove Theorem~\ref{main theorem 2}.  

Notice that the character sum $S_{b^2,\hat{i}}$ depends only on $b^2 \in \FF_q$, not on the pair $a,b$.  When we wish to emphasize this, and to emphasize that the value does not depend on $b^2$ coming from a pair $a,b$, we write
\begin{equation} \label{S abbreviation}
S_c := S_{c,\hat{i}} := \sum_{x \in \FF_{q^2}} \chi\left( x^{i_0} (x+1)^{i_1} (x + c)^{i_2}\right).
\end{equation}

\begin{prop} \label{lidl exercise}
Fix a tuple $\hat{i} = (i_0, i_1, i_2, i_3)$. Then
\[
\sum_{c \in \FF_q} S_{c,\hat{i}} = \begin{cases} q(q-3) &\mbox{if } i_0 + i_1 \neq d\\ (q-1)^2 &\mbox{otherwise.} \end{cases}
\]
\end{prop}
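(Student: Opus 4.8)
First I would interchange the order of summation in $\sum_{c\in\FF_q}S_{c,\hat{i}}$ and carry out the inner sum over $c$ first. Writing the summand as $\chi(x)^{i_0}\chi(x+1)^{i_1}\chi(x+c)^{i_2}$ and setting $T(x):=\sum_{c\in\FF_q}\chi(x+c)^{i_2}$, the problem reduces to computing $\sum_{x\in\FF_{q^2}}\chi(x)^{i_0}\chi(x+1)^{i_1}\,T(x)$. So the work splits into two pieces: determining $T(x)$, and then evaluating the resulting weighted character sum in $x$.

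The structural fact used throughout is that $\chi$ — hence every power $\chi^{j}$ — is trivial on $\FF_q^*$, since $\FF_q^*=(\FF_{q^2}^*)^{q+1}$ and $\chi$ has order $d=q+1$; this is already exploited several times in the excerpt. To compute $T(x)$ I would partition $\FF_{q^2}^*$ into the $q+1$ punctured lines through the origin, i.e.\ the fibers of $\FF_{q^2}^*\to\PP^1(\FF_q)$. Each such punctured line is a single $\FF_q^*$-coset, so $\chi^{i_2}$ is constant on it; and since $\chi^{i_2}$ is non-trivial (recall $i_2\not\equiv 0\bmod d$, as in Remark~\ref{nontrivial remark}), the values of $\chi^{i_2}$ over all $q+1$ of these lines sum to $0$. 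Then: if $x\in\FF_q$ the set $x+\FF_q$ equals $\FF_q$, so $T(x)=\#\FF_q^*=q-1$; while if $x\notin\FF_q$ the set $x+\FF_q$ is an affine line missing the origin and parallel to but distinct from $\FF_q$, hence meeting each of the remaining $q$ punctured lines through the origin exactly once, which forces $T(x)=-\chi^{i_2}(1)=-1$.

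Substituting these two values, and using that $\chi(x)^{i_0}\chi(x+1)^{i_1}$ equals $1$ for $x\in\FF_q\setminus\{0,-1\}$ and $0$ for $x\in\{0,-1\}$ (here one needs $i_0,i_1\not\equiv 0\bmod d$), the double sum collapses to
\[
\sum_{c\in\FF_q}S_{c,\hat{i}}\;=\;q(q-2)\;-\;J,\qquad\text{where}\qquad J:=\sum_{x\in\FF_{q^2}}\chi(x)^{i_0}\,\chi(x+1)^{i_1}.
\]
It remains to evaluate $J$. I would partition by lines once more: grouping the terms of $J$ according to the pair $(\ell_1,\ell_2)=\bigl([x],[x+1]\bigr)\in\PP^1(\FF_q)^2$, one gets $J=\sum_{\ell_1,\ell_2}\widetilde{\chi^{i_0}}(\ell_1)\,\widetilde{\chi^{i_1}}(\ell_2)\,N(\ell_1,\ell_2)$, where $\widetilde{\chi^{j}}$ is the character of $\PP^1(\FF_q)$ induced by $\chi^{j}$ and $N(\ell_1,\ell_2)$ counts the $x$ with $[x]=\ell_1$, $[x+1]=\ell_2$. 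A short linear-algebra computation (express $1$ in terms of bases of the two lines) shows $N(\ell_1,\ell_2)=q-2$ when $\ell_1=\ell_2=\FF_q$, equals $1$ when $\ell_1\neq\ell_2$ and both differ from $\FF_q$, and equals $0$ in all remaining cases. Feeding this in and applying orthogonality of characters on the cyclic group $\PP^1(\FF_q)$ twice (the sum of a non-trivial character over all $q+1$ lines being $0$) yields $J=-1$ when $\chi^{i_0+i_1}$ is trivial, i.e.\ $i_0+i_1=d$, and $J=q$ otherwise; plugging back in gives $q(q-2)+1=(q-1)^2$ in the first case and $q(q-2)-q=q(q-3)$ in the second, as claimed. (In the non-degenerate case one can instead factor the Jacobi sum $J=J(\chi^{i_0},\chi^{i_1})$ over $\FF_{q^2}$ into Gauss sums and evaluate each by the same line-partition trick; each Gauss sum comes out as $q$ times a root of unity, and the roots cancel in the quotient.)

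I expect the main obstacle to be exactly the evaluation $J=q$ in the non-degenerate case: the Weil bound gives only $|J|\le q$, and even the reality of $J$ (itself a consequence of the characters being trivial on $\FF_q^*$) only gives $J=\pm q$. Getting the sign right is where one must cash in the special structure of these characters, and the $N(\ell_1,\ell_2)$ count is precisely that step; everything else — interchanging the sums, the two easy values of $T(x)$, and the final arithmetic — is bookkeeping.
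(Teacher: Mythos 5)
Your proposal is correct, and its skeleton is the same as the paper's: interchange the order of summation, show the inner sum over $c$ equals $q-1$ for $x\in\FF_q$ and $-1$ for $x\notin\FF_q$ (your line-partition argument computes exactly the paper's quantity $\phi_{\alpha,\beta}$, which the paper obtains instead by a rescaling argument plus the vanishing of the total sum), collapse everything to $q(q-2)-J$ with $J=\sum_{x\in\FF_{q^2}}\chi^{i_0}(x)\chi^{i_1}(x+1)$, and then evaluate $J$. Where you genuinely diverge is in that last step: the paper simply quotes Theorems~5.16 and 5.21 of Lidl--Niederreiter, which evaluate the Gauss sums of characters of order dividing $q+1$ and hence give $J(\chi^{i_0},\chi^{i_1})=q$ when $i_0+i_1\neq d$ and $-1$ when $i_0+i_1=d$, whereas you rederive this special case from scratch by grouping $x$ according to the pair of $\FF_q^*$-cosets $([x],[x+1])$, counting $N(\ell_1,\ell_2)$ (your counts $q-2$, $1$, $0$ are correct), and using orthogonality on the cyclic quotient $\FF_{q^2}^*/\FF_q^*$ of order $q+1$. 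The paper's route is shorter because it leans on the standard literature; yours is self-contained and makes transparent exactly why the sign comes out as $+q$ rather than $-q$, which, as you note, is the only point the Weil bound and reality cannot settle. One housekeeping remark: both your $T(x)$ computation and the orthogonality steps require $i_0,i_1,i_2\not\equiv 0\bmod d$; this is legitimate here since the sums $S_{c,\hat{i}}$ of Definition~\ref{charsum defn} carry the standing hypothesis of Remark~\ref{nontrivial remark}, but it is worth stating explicitly in the proof, as the bare statement of Proposition~\ref{lidl exercise} does not repeat it.
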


\begin{proof}
(Compare Exercise~5.56 of \cite{LN97} for a similar result.)  Write $\chi_{i_0}$ for $\chi^{i_0}$, etc.  Following the notation in \cite[\S5.3]{LN97}, write 
\[
J(\chi_{i_0}, \chi_{i_1}) := \sum_{x \in \FF_{q^2}} \chi_{i_0}(x) \chi_{i_1}(1-x).
\]
By Theorems~5.21 and 5.16 of \cite{LN97}, we have
\begin{align*}
J(\chi_{i_0}, \chi_{i_1}) = \begin{cases} \frac{(-1)^{i_0} \cdot q \cdot (-1)^{i_1} \cdot q}{(-1)^{i_0+i_1} \cdot q} = q &\mbox{if } i_0 + i_1 \neq d \\ \frac{-1}{q^2} \cdot (-1)^{i_0} \cdot q \cdot (-1)^{i_1} \cdot q  = -1 &\mbox{if } i_0 + i_1 = d.\end{cases}
\end{align*}
Let $b$ be as in Definition~\ref{standard line def}.  For any $\alpha, \beta \in \FF_{q}$, write $\phi_{\alpha,\beta} := \sum_{c \in \FF_{q}} \chi_{i_2}(\alpha b + \beta + c).$  Notice that $\phi_{0,\beta} = q-1$ (because $\chi(0) = 0$ and $\chi(x) = 1$ for $x \in \FF_{q}^*$) and $\phi_{\alpha,\beta} = \phi_{\alpha', \beta}$ for any $\alpha, \alpha' \in \FF_{q}^*$ (because we may rescale by $\alpha^{-1}$, and then note that as $c$ ranges through $\FF_q$, the expression $\alpha^{-1}(\beta + c)$ ranges also through $\FF_q$).  We also have
\[
\sum_{\alpha, \beta \in \FF_q} \phi_{\alpha,\beta} = \sum_{x \in \FF_{q^2}} \sum_{c \in \FF_q} \chi(x+c) = 0.
\]
We deduce that $\phi_{\alpha,\beta} = -1$ for $\alpha \in \FF_{q}^*$.  
With these preliminary results, we are now ready to prove the proposition.
\begin{align*}
\sum_{c \in \FF_q} S_{c,\hat{i}} &= \sum_{x \in \FF_{q^2}} \chi_{i_0}(x) \chi_{i_1}(x+1) \sum_{c \in \FF_{q}} \chi_{i_2} (x+c) \\
&= \sum_{\alpha, \beta \in \FF_{q}} \chi_{i_0}(\alpha b + \beta) \chi_{i_1} (\alpha b + \beta + 1) \phi_{\alpha, \beta} \\
&= (-1) \cdot \left(\sum_{x \in \FF_{q^2}} \chi_{i_0}(x) \chi_{i_1}(x+1)\right) + (q) \left(\cdot \sum_{\beta \in \FF_q} \chi_{i_0}(0 \cdot b + \beta) \chi_{i_1}(0 \cdot b + \beta + 1)\right) \\
&= q\cdot (q-2) - \left(\sum_{x \in \FF_{q^2}} \chi_{i_0}(-x) \chi_{i_1}(1-x)\right) \\
&= q\cdot (q-2) - J(\chi_{i_0}, \chi_{i_1}).
\end{align*}
The proposition now follows from our Jacobi sum calculations at the beginning of this proof.
\end{proof}

\begin{prop} \label{c-orbit invariance}
Assume the tuple $\hat{i}$ has the form $(i,i,i,d-3i)$ and $c \in \FF_{q}^*$.  Then $S_{c} = S_{\tau(c)}$ for any of
\begin{align*}
\tau(c) &= c^{-1},\, 1-c,\, 1 - c^{-1} ,\, (1 - c)^{-1},\, (1-c^{-1})^{-1}.
\end{align*}
\end{prop}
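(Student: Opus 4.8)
The plan is to reduce everything to two affine substitutions in the summation variable. First I would use the hypothesis that the tuple has the form $(i,i,i,d-3i)$ to rewrite
\[
S_c = \sum_{x\in\FF_{q^2}} \chi((x(x+1)(x+c))^i),
\]
so that the three linear factors $x$, $x+1$, $x+c$ enter completely symmetrically. The five maps $\tau$ listed in the statement, together with the identity, are precisely the elements of the group (an $S_3$, the anharmonic group) generated by $c\mapsto c^{-1}$ and $c\mapsto 1-c$ acting on $\PP^1\setminus\{0,1,\infty\}$; hence it suffices to prove $S_c=S_{c^{-1}}$ and $S_c=S_{1-c}$, and then compose. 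One may assume $c\neq 1$, so that all the $\tau(c)$ are defined. Throughout I will use, exactly as in the proof of Theorem~\ref{the character sum}, that $\chi$ --- having order $d=q+1$ --- is trivial on the subgroup $\FF_q^*$; in particular $\chi(\alpha)=1$ for every $\alpha\in\FF_q^*$, including $\alpha=-1$ and $\alpha=c$.

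For $S_c=S_{c^{-1}}$ I would substitute $x\mapsto cx$, which is a bijection of $\FF_{q^2}$. Under this substitution $x(x+1)(x+c)$ becomes $c^3\,x(x+1)(x+c^{-1})$, using $cx+1=c(x+c^{-1})$ and $cx+c=c(x+1)$; since $c^3\in\FF_q^*$ the factor $\chi(c^{3i})=1$ drops out, leaving $S_{c^{-1}}$. For $S_c=S_{1-c}$ I would substitute $x\mapsto -x-1$, again a bijection of $\FF_{q^2}$. Here $x\mapsto -x-1=-(x+1)$, $x+1\mapsto -x$, and $x+c\mapsto -x-1+c=-(x+(1-c))$; collecting the signs, $x(x+1)(x+c)$ becomes $-\,x(x+1)(x+(1-c))$, and since $\chi$ annihilates the resulting overall sign $(-1)^i\in\FF_q^*$ we obtain $S_{1-c}$.

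Finally I would obtain $S_c=S_{\tau(c)}$ for the remaining $\tau$ by composing these two substitutions: for instance, applying $x\mapsto -x-1$ and then $x\mapsto (1-c)x$ gives $S_c=S_{1-c}=S_{(1-c)^{-1}}$, and the other elements of the group are handled identically. I do not expect a genuine obstacle --- the computation is essentially bookkeeping --- but the two points needing care are the identity $-x-1+c=-(x+(1-c))$, which is exactly what makes the translate $1-c$ (rather than some other constant) appear, and the verification that every scalar pulled outside $\chi$ lies in $\FF_q^*$ and is therefore killed by $\chi$; both of these rely only on $c\in\FF_q^*$ together with the running hypothesis that $q$ is odd.
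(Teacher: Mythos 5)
Your proposal is correct and matches the paper's own proof: both reduce to the two generators $\tau(c)=c^{-1}$ and $\tau(c)=1-c$ of the anharmonic group, handle these via the substitutions $x\mapsto cx$ and $x\mapsto -x-1$ together with the triviality of $\chi$ on $\FF_q^*$ (after absorbing the common exponent $i$), and obtain the remaining cases by composition.
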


\begin{proof}
It suffices to show the result for $\tau(c) = c^{-1}$ and $\tau(c) = 1-c$, because the other maps can be obtained as compositions of these two.  After possibly replacing $\chi$ by $\chi^i$, we can ignore the exponents; the only fact we need is that $\chi(x^d) = 1$ for any non-zero $x \in \FF_{q^2}$ (and in particular, for any $x \in \FF_q^*$).  

We first check the case $\tau(c) = c^{-1}$:
\begin{align*}
S_{c^{-1}} &= \sum_{x \in \FF_{q^2}} \chi\left( x (x+1) (x + c^{-1})\right) \\
&= \sum_{x \in \FF_{q^2}} \chi\left( cx (cx+c) (cx + 1)\right) \\
\intertext{after multiplying by $\chi(c^3) = 1$}
&= \sum_{x \in \FF_{q^2}} \chi\left( x (x+c) (x + 1)\right) \\
\intertext{after replacing $cx$ by $x$}
&= S_c.
\end{align*}

We next check the case $\tau(c) = 1-c$:
\begin{align*}
S_{1-c} &= \sum_{x \in \FF_{q^2}} \chi\left( x (x+1) (x + 1-c)\right) \\
&= \sum_{x \in \FF_{q^2}} \chi\left( (-x-1) (-x) (-x -c)\right) \\
\intertext{replacing $x$ by $-x-1$}
&= S_c.
\end{align*}
Here the last equality follows after multiplying by $\chi((-1)^3)$.
\end{proof}

\begin{defn} \label{admissible def}
We call an element $c \in \FF_{q}$ \emph{admissible} if there exist $a,b$ as in Definition~\ref{standard line def} such that $c = b^2$.  
\end{defn}

\begin{lem} \label{count admissible values}
Assume $q \equiv 1 \bmod 4$.  Then exactly $\frac{q-1}{4}$ of the values of $c \in \FF_{q}$ are admissible.
\end{lem}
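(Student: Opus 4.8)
The plan is to count the admissible values of $c \in \FF_q$ directly from the defining relation. By Definition~\ref{admissible def}, $c$ is admissible precisely when there exist $a \in \FF_q$ and $b \in \FF_{q^2} \setminus \FF_q$ with $a^2 + 1 = b^2$ and $c = b^2$. So I want to count the number of $c \in \FF_q$ for which the equation
\[
a^2 = c - 1
\]
has a solution $a \in \FF_q$ \emph{and} the resulting $b$ (a square root of $c$) lies outside $\FF_q$. Equivalently, I need $c - 1$ to be a square in $\FF_q$ (so that $a$ exists) and $c$ to be a non-square in $\FF_q$ (so that $b = \sqrt{c} \notin \FF_q$). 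Note that since $q \equiv 1 \bmod 4$, $-1$ is a square in $\FF_q$, and we should double-check the degenerate cases $c = 0$ and $c = 1$: if $c = 0$ then $b = 0 \in \FF_q$, excluded; if $c = 1$ then $a = 0$, but then $b^2 = 1$ forces $b = \pm 1 \in \FF_q$, also excluded. So both degenerate values are automatically non-admissible, consistent with requiring $c$ a non-square.

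First I would set up the character-sum count. Let $\psi$ denote the quadratic (Legendre) character on $\FF_q$, with $\psi(0) = 0$. The number of admissible $c$ is
\[
N := \#\{ c \in \FF_q : \psi(c) = -1 \text{ and } \psi(c-1) = 1 \}.
\]
I would expand this using the standard indicator $\frac{1 - \psi(c)}{2}$ for "$c$ is a non-square" (valid for $c \neq 0$, and the $c = 0$ term contributes nothing since $\psi(0)=-1$ fails) and $\frac{1 + \psi(c-1)}{2}$ for "$c - 1$ is a square" (valid for $c \neq 1$, and $c = 1$ contributes nothing). Multiplying out,
\[
4N = \sum_{c \in \FF_q} \bigl(1 - \psi(c)\bigr)\bigl(1 + \psi(c-1)\bigr) + (\text{corrections at } c = 0, 1).
\]
Expanding the product gives four sums: $\sum_c 1 = q$, $\sum_c \psi(c-1) = 0$, $-\sum_c \psi(c) = 0$, and $-\sum_c \psi(c)\psi(c-1)$. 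The last is a classical Jacobi-type sum: $\sum_{c \in \FF_q} \psi(c)\psi(c-1) = \sum_{c} \psi(c^2 - c) = -1$ (this is \cite[Theorem~5.48 or the evaluation of $\sum \psi(c(c-1))$]{LN97}; it follows from completing the square and using that $\psi(-1) = 1$ here, or directly from a Jacobi sum identity). Hence $4N = q - (-1) + (\text{corrections}) = q + 1 + (\text{corrections})$.

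The remaining step is to handle the boundary terms $c = 0$ and $c = 1$ carefully, since the indicator identities I used are only exact away from those points. A clean way to organize this: directly compute $4N$ by restricting the sum to $c \notin \{0,1\}$ from the start, where both indicator formulas are genuinely exact, so that $4N = \sum_{c \neq 0,1} (1 - \psi(c))(1 + \psi(c-1))$; then one needs the value of $\sum_{c \neq 0,1} \psi(c)\psi(c-1)$, which equals $\sum_{c \in \FF_q} \psi(c)\psi(c-1) = -1$ already (the excluded terms $c=0,1$ contribute $0$ since $\psi(0)=0$), and similarly $\sum_{c \neq 0,1} 1 = q - 2$, $\sum_{c \neq 0,1}\psi(c) = -\psi(1) = -1$ wait — $\sum_{c \in \FF_q}\psi(c) = 0$ so $\sum_{c\neq 0,1}\psi(c) = -\psi(1) = -1$, and likewise $\sum_{c\neq 0,1}\psi(c-1) = -\psi(-1) = -1$ using $q \equiv 1 \bmod 4$. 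Collecting: $4N = (q-2) + (-1) - (-1) - (-1) = q - 1$, so $N = \frac{q-1}{4}$, as claimed. The main obstacle is purely bookkeeping — keeping the boundary terms $c=0,1$ and the sign $\psi(-1) = +1$ (which is exactly where the hypothesis $q \equiv 1 \bmod 4$ enters) straight throughout — rather than anything conceptually deep; the only genuine input is the classical evaluation $\sum_{c}\psi(c(c-1)) = -1$.
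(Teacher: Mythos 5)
Your proof is correct. Note, though, that the paper does not actually prove this lemma at all: its ``proof'' is a one-line citation to Chapter~5, Exercises~29--31 of Ireland--Rosen for the case $q=p$. What you have written is essentially the standard argument underlying those exercises, carried out in full and for general $q$: you first correctly translate admissibility of $c$ into the pair of conditions ``$c$ is a nonsquare and $c-1$ is a square in $\FF_q$'' (checking the degenerate cases $c=0,1$ and the direction that $b^2=c\in\FF_q$ with $b\notin\FF_q$ forces $c$ to be a nonsquare), and then count such $c$ with the quadratic character $\psi$ via the indicators $\tfrac{1-\psi(c)}{2}$, $\tfrac{1+\psi(c-1)}{2}$ restricted to $c\neq 0,1$, together with the classical evaluation $\sum_{c}\psi\bigl(c(c-1)\bigr)=-1$. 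Your bookkeeping is right: the hypothesis $q\equiv 1\bmod 4$ enters exactly once, through $\psi(-1)=1$ in the term $\sum_{c\neq 0,1}\psi(c-1)=-\psi(-1)$, and the total $4N=(q-2)-1+1+1=q-1$ gives $N=\tfrac{q-1}{4}$. Two cosmetic points: the aside that the evaluation of $\sum_c\psi(c^2-c)$ ``uses $\psi(-1)=1$'' is not needed (that sum equals $-\psi(1)=-1$ for any odd $q$, e.g.\ by \cite[Theorem~5.48]{LN97} or via $\psi(-1)J(\psi,\psi)=-1$), and you should delete the visible mid-sentence self-correction (``wait --- \dots'') and simply present the restricted-sum computation, which is the clean version you end with. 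So your proposal buys a self-contained proof where the paper defers to a reference, at no extra cost in length.
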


\begin{proof}
This is well-known; see for example Chapter~5, Exercises 29-31 of \cite{IR90} for a similar result in the case $q = p$.  
\end{proof}

\begin{lem} \label{c-orbits}
Assume $q \equiv 1 \bmod 4$ and assume $c$ is an admissible value as in Definition~\ref{admissible def}.  Then $(1-c^{-1})^{-1}$ is also admissible while $c^{-1}, (1-c^{-1}), 1-c, (1-c)^{-1}$ are not admissible.
\end{lem}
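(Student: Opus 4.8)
The plan is to trade the slightly awkward Definition~\ref{admissible def} for a clean criterion in terms of the quadratic character of $\FF_q$, and then to read off all six assertions by a one-line computation for each. Write $\chi_2$ for the quadratic character of $\FF_q$ (extended by $\chi_2(0)=0$). The criterion I will establish is: \emph{$c\in\FF_q$ is admissible if and only if $\chi_2(c)=-1$ and $\chi_2(c-1)=1$.} Since $p\geq 5$, $q$ is odd, so every element of $\FF_q$ is a square in $\FF_{q^2}$; thus any $c\in\FF_q$ has a square root $b\in\FF_{q^2}$, and its two square roots $\pm b$ lie in $\FF_q$ precisely when $c$ is a nonzero square in $\FF_q$. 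Hence the requirement $b\in\FF_{q^2}\setminus\FF_q$ of Definition~\ref{standard line def} is equivalent to $\chi_2(c)=-1$. The only other requirement is that $a:=\sqrt{c-1}$ lie in $\FF_q$; note that $\chi_2(c)=-1$ already forces $c\neq 1$, so $c-1\in\FF_q^\ast$ and the condition becomes $\chi_2(c-1)=1$. (This is consistent with the count in Lemma~\ref{count admissible values}, via the standard evaluation $\sum_{c}\chi_2\big(c(c-1)\big)=-1$ together with $\chi_2(-1)=1$.)

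With the criterion in hand, the lemma becomes a finite check using multiplicativity of $\chi_2$ and, crucially, the hypothesis $q\equiv 1\bmod 4$, which gives $\chi_2(-1)=1$. Assume $\chi_2(c)=-1$ and $\chi_2(c-1)=1$. For $(1-c^{-1})^{-1}=c/(c-1)$ one computes $\chi_2\big(c/(c-1)\big)=-1$ and $\chi_2\big(c/(c-1)-1\big)=\chi_2\big(1/(c-1)\big)=1$, so $(1-c^{-1})^{-1}$ is admissible. For the remaining four, in each case one of the two required values comes out wrong: $\chi_2(1-c)=\chi_2\big(-(c-1)\big)=1\neq -1$, which simultaneously rules out $1-c$ and $(1-c)^{-1}$; and $\chi_2(c^{-1}-1)=\chi_2\big((1-c)/c\big)=-1\neq 1$ rules out $c^{-1}$, with the analogous computation $\chi_2\big((1-c^{-1})-1\big)=\chi_2(-c^{-1})=-1\neq 1$ ruling out $1-c^{-1}$.

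I do not expect a genuine obstacle here: once the $\chi_2$-criterion is in place the statement is a short bookkeeping exercise. The only points that need a little care are (i) the degenerate values $c=0$ and $c=1$ when setting up the criterion — both are automatically excluded by $\chi_2(c)=-1$ — and (ii) keeping consistent track of the sign $\chi_2(-1)=1$, which is exactly where $q\equiv 1\bmod 4$ is used and where the $q\equiv 3\bmod 4$ analogue would differ.
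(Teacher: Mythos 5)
Your proposal is correct and follows essentially the same route as the paper: both arguments rest on the characterization that $c$ is admissible exactly when $c$ is a nonsquare and $c-1$ is a nonzero square in $\FF_q$, and then verify each of the five values by multiplicativity of the quadratic character together with $\chi_2(-1)=1$ (which is where $q\equiv 1\bmod 4$ enters). The only difference is cosmetic: you spell out the proof of the admissibility criterion (square roots of elements of $\FF_q$ in $\FF_{q^2}$), which the paper treats as immediate from the definition.
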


\begin{proof}
By definition of \emph{admissible}, we know that $c$ is a quadratic nonresidue (NR) and $c-1$ is a quadratic residue (QR).  Because $-1$ is a QR and $c^{-1}$ is an NR, we deduce that $c-1$ is a QR implies $1 - c^{-1}$ is an NR implies $c^{-1} - 1$ is an NR.  Hence in particular $c^{-1}$ is not admissible. Because $-c^{-1} = (1-c^{-1}) - 1$ is an NR, we deduce that $1-c^{-1}$ is not admissible.  Also $c-1$ being a QR implies that both $1-c$ and $(1-c)^{-1}$ are QRs, and in particular, neither of these is admissible.  

On the other hand, we've already checked that $1-c^{-1}$ is an NR, hence so is $(1-c^{-1})^{-1}$.  Also, $c^{-1} = 1-(1-c^{-1})$ is an NR, hence the product $\left(1-(1-c^{-1})\right) (1-c^{-1})^{-1} = (1-c^{-1})^{-1} - 1$ is a QR.  Together this shows that $(1-c^{-1})^{-1}$ is admissible, which completes the proof. 
\end{proof}

We will prove Theorem~\ref{main theorem 2} by considering the sum $\sum_{c \in \FF_q} S_c$, where $S_c$ is as in (\ref{S abbreviation}) (and some fixed tuple $\hat{i}$ is implicit).  The values $S_0$ and $S_1$ are special, because in those cases our character sum reduces to a Jacobi sum.

\begin{lem} \label{jacobi lemma}
We have $S_0 = \begin{cases}  q & \mbox{if }i_0 + i_2 \neq d \\ -1 & \mbox{if } i_0 + i_2 = d\end{cases}$ and $S_1 = \begin{cases}  q & \mbox{if }i_1 + i_2 \neq d \\ -1 & \mbox{if } i_1 + i_2 = d.\end{cases}$
\end{lem}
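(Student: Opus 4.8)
The plan is to recognize $S_0$ and $S_1$ as Jacobi sums, up to a harmless sign, and then to quote the Gauss-sum evaluation already performed in the proof of Proposition~\ref{lidl exercise}. Setting $c = 0$ in (\ref{S abbreviation}) collapses $x^{i_0}(x+c)^{i_2}$ into $x^{i_0+i_2}$, so $S_0 = \sum_{x \in \FF_{q^2}} \chi^{i_0+i_2}(x)\chi^{i_1}(x+1)$; after the substitution $x \mapsto -x$ and using $\chi(-1) = 1$ (which holds because $q$ is odd, so $-1$ is a $d$-th power in $\FF_{q^2}^*$ while $\chi$ has order $d$), this becomes $J(\chi_{i_0+i_2}, \chi_{i_1})$ in the notation of that proof. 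Likewise, setting $c = 1$ gives $S_1 = \sum_{x \in \FF_{q^2}} \chi^{i_0}(x)\chi^{i_1+i_2}(x+1) = J(\chi_{i_0}, \chi_{i_1+i_2})$ after the same manipulation.

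Next I would split into cases according to whether the relevant product character is trivial. Since $i_0, i_2 \in \{1, \dots, d-1\}$, one has $i_0 + i_2 \equiv 0 \bmod d$ exactly when $i_0 + i_2 = d$. If $i_0 + i_2 \neq d$, then $\chi^{i_0+i_2}$ and $\chi^{i_1}$ are nontrivial, and their product $\chi^{i_0+i_1+i_2}$ is nontrivial by Remark~\ref{nontrivial remark}; the Gauss-sum computation in Proposition~\ref{lidl exercise} then yields $J(\chi_{i_0+i_2}, \chi_{i_1}) = q$. If $i_0 + i_2 = d$, then $\chi^{i_0+i_2}$ is the trivial character (vanishing only at $0$), so $S_0 = \sum_{x \neq 0} \chi^{i_1}(x+1) = -\chi^{i_1}(1) = -1$, the full sum over $\FF_{q^2}$ vanishing because $\chi^{i_1}$ is nontrivial ($i_1 \not\equiv 0 \bmod d$). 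The argument for $S_1$ is identical, with the dichotomy now on whether $i_1 + i_2 = d$ and with $\chi^{i_0}$ automatically nontrivial since $i_0 \in \{1,\dots,d-1\}$.

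The only delicate point is the sign in the Jacobi-sum evaluation: one must confirm the value is $+q$ and not $-q$. This is precisely the bookkeeping carried out in the proof of Proposition~\ref{lidl exercise}, where one writes $g(\chi^j) = (-1)^j q$ for the Gauss sums and observes that the three signs cancel; the consistency of this even when an exponent must be reduced modulo $d$ relies on $d = q+1$ being even. Since that verification has already been done there, I anticipate no genuinely new obstacle here — the whole lemma is a direct consequence of the Jacobi-sum facts \cite[Theorems~5.16 and 5.21]{LN97}, assembled exactly as in Proposition~\ref{lidl exercise}, once $c$ has been specialized to $0$ or $1$.
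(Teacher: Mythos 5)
Your argument is correct and is essentially the paper's own proof: both rewrite $S_0$ (resp.\ $S_1$) as the Jacobi sum $\sum_{x}\chi_{i_0+i_2}(x)\chi_{i_1}(1-x)$ via $x\mapsto -x$ (using $\chi(-1)=1$), quote the Gauss-sum evaluation already done in Proposition~\ref{lidl exercise} when $i_0+i_2\neq d$, and compute directly that the sum is $-1$ when $\chi_{i_0+i_2}$ is trivial. Your extra remarks (nontriviality of $\chi^{i_0+i_1+i_2}$ from Remark~\ref{nontrivial remark}, and $d$ even for the sign bookkeeping) just make explicit what the paper leaves implicit.
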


\begin{proof}
We prove the result for $S_0$ only.  We have
\[
S_0 = \sum_{x \in \FF_{q^2}} \chi_{i_0 + i_2}(x) \chi_{i_1}(x+1) = \sum_{x \in \FF_{q^2}} \chi_{i_0 + i_2}(x) \chi_{i_1}(1-x).  
\]
If $i_0 + i_2 \neq d$, we are finished as in the proof of Lemma~\ref{lidl exercise}.  If $i_0 + i_2 = d$, then we have 
\[
S_0 = \sum_{x \in \FF_{q^2}} \chi_{d}(x) \chi_{i_1}(1-x) = \sum_{x \in \FF_{q^2},\, x \neq 0} \chi_{i_1}(1-x) = -1 + \sum_{x \in \FF_{q^2}} \chi_{i_1}(1-x) = -1.
\]
\end{proof}

\section{Proof of Theorem~\ref{main theorem 2}}

Exactly as in the proof of Theorem~\ref{main theorem 1}, it suffices to show that for the trivial character $\lambda = (0,0,0,0)$ and for each character $(i,i,i,d-3i)$ for $i, 3i \neq 0 \bmod d$, there is some line $L_{a,b}$ with non-zero projection to $V_{\lambda}$.  The trivial character is accounted for in Corollary~\ref{trivial character}. For the rest of the tuples $\hat{i}$, it suffices to show that some admissible character sum satisfies $S_{c, \hat{i}} \neq 2q$.  Notice that if $\gcd(k,d) = 1$, and if $\sigma$ is the automorphism of $\ZZ[\zeta_d]$ induced by $\zeta_d \mapsto \zeta_d^k$, then we have $S_{c,(ki,ki,ki,d-3ki)} = \sigma(S_{c,(i,i,i,d-3i)})$.  Because $2q$ is fixed by any automorphism, we deduce that at most $n$ lines are needed, where $n$ is equal to the number of divisors of $d$.  In fact, $n-1$ lines suffice, because $i = d$ corresponds to the trivial character.  In summary, to prove Theorem~\ref{main theorem 2}, it suffices to prove the following proposition.  

\begin{prop} \label{exists line prop}
Let $i$ be such that $i,-3i \not\equiv 0 \bmod d$.  Then there exists some line $L_{a,b}$ as in Definition~\ref{standard line def} such that the character sum associated to $\hat{i} = (i,i,i,d-3i)$ satisfies $S \neq 2q$.
\end{prop}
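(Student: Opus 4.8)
The plan is to argue by contradiction: assume $S_{c,\hat i}=2q$ for \emph{every} admissible $c\in\FF_q$, and derive a numerical impossibility by summing $S_c$ over all of $\FF_q$. Throughout, $\hat i=(i,i,i,d-3i)$, and I write $S_c$ as in \eqref{S abbreviation} and $M:=\#\{c\in\FF_q\setminus\{0,1\}:S_c=2q\}$.

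First I would assemble the ingredients of the estimate. By Proposition~\ref{lidl exercise}, $\sum_{c\in\FF_q}S_c$ equals $q(q-3)$ when $2i\ne d$ and equals $(q-1)^2$ when $2i=d$ (i.e.\ $i=d/2$); by Lemma~\ref{jacobi lemma}, correspondingly $S_0=S_1=q$ or $S_0=S_1=-1$; and by the Weil bound, $S_c\ge-2q$ for every $c\ne 0,1$. Splitting off $S_0,S_1$ and bounding the other $q-2$ terms by $2q$ (on the $M$ of them where $S_c=2q$) or by $-2q$ (on the remaining $q-2-M$), the inequality $\sum_{c\in\FF_q}S_c\ge S_0+S_1+2qM-2q(q-2-M)$ rearranges to $M\le\tfrac{3q-9}{4}$ in the case $2i\ne d$, and to $M\le\tfrac{3(q-1)^2}{4q}$ in the case $2i=d$. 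Since $\tfrac{3(q-1)^2}{4q}<\tfrac{3(q-1)}{4}$, in both cases we obtain $M<\tfrac{3(q-1)}{4}$. This part is routine, in the spirit of the proof of Theorem~\ref{bound on N}.

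The crux is the matching \emph{lower} bound $M\ge\tfrac{3(q-1)}{4}$, which is where admissibility enters. By Proposition~\ref{c-orbit invariance}, $S_c$ is constant on each orbit of the order-$6$ group $G$ generated by $c\mapsto c^{-1}$ and $c\mapsto 1-c$ acting on $\FF_q\setminus\{0,1\}$; hence, under our assumption, $S\equiv 2q$ on the union of the $G$-orbits of the admissible values, so $M$ is at least the size of that union. By Lemma~\ref{count admissible values} there are $\tfrac{q-1}{4}$ admissible $c$, so it suffices to check that every $G$-orbit meeting the admissible set has size at least $3$ times the number of admissible points it contains. The only $G$-orbits on $\FF_q\setminus\{0,1\}$ of size $<6$ are, for $p\ge 5$, the single orbit $\{-1,2,\tfrac12\}$ of size $3$, and (only when $q\equiv 1\bmod 3$) the orbit $\{\zeta_6,\zeta_6^{-1}\}$ of primitive sixth roots of unity, of size $2$. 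Using $q\equiv 1\bmod 4$: $-1$ is a square and $\tfrac12$ cannot be admissible (admissibility of $\tfrac12$ would force $2$ to be both a nonresidue and a residue), so the size-$3$ orbit contains at most one admissible point, namely $c=2$; and a primitive sixth root $\zeta$ satisfies $\zeta-1=\zeta^2$ and, since $q\equiv 1\bmod 4$ and $q\equiv 1\bmod 3$ together give $12\mid q-1$, is itself a square, hence not admissible, so the size-$2$ orbit contains no admissible point. On a size-$6$ orbit, Lemma~\ref{c-orbits} says that if $c$ is admissible then so is $\tfrac{c}{c-1}$ while the other four orbit members are not — so exactly two points are admissible. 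In every case the orbit size is at least $3$ times its admissible count, and summing over the distinct orbits that occur gives $M\ge 3\cdot\tfrac{q-1}{4}$, contradicting the upper bound.

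The contradiction shows that some admissible $c$ satisfies $S_{c,\hat i}\ne 2q$; choosing $a,b$ with $b^2=c$ as in Definition~\ref{admissible def} produces the required line $L_{a,b}$ (and, via Corollary~\ref{inner product cor}, the non-vanishing projection to $V_\lambda$ needed downstream). The main obstacle is the orbit bookkeeping in the third paragraph: verifying that the small ($2$- and $3$-element) $G$-orbits cause no loss in the bound $M\ge\tfrac{3(q-1)}{4}$, which is exactly where Lemma~\ref{c-orbits} and the hypothesis $q\equiv 1\bmod 4$ are essential; the summation inequality itself is elementary.
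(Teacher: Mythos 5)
Your proposal is correct and takes essentially the same route as the paper's proof: assume every admissible $c$ yields $S_c=2q$, use Proposition~\ref{c-orbit invariance} together with Lemma~\ref{c-orbits} and Lemma~\ref{count admissible values} to inflate the $\tfrac{q-1}{4}$ admissible values to $\tfrac{3(q-1)}{4}$ values of $c$ with $S_c=2q$, and then contradict Proposition~\ref{lidl exercise} using Lemma~\ref{jacobi lemma} and the Weil bound. The only differences are organizational: you phrase the contradiction as matching upper and lower bounds on $M$ (treating the case $2i=d$ separately) and classify all small orbits of the order-$6$ group, including $\{\zeta_6,\zeta_6^{-1}\}$, whereas the paper only needs the degenerate case $c=2$ among admissible values and compares against $\max(q(q-3),(q-1)^2)$ in a single inequality.
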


\begin{proof}[Proof of Proposition~\ref{exists line prop}]
In the terminology of Definition~\ref{admissible def}, it suffices to show that there is some admissible $c \in \FF_q$ such that $S_{c, \hat{i}} \neq 2q$.  We prove this by contradiction.  Thus assume all $\frac{q-1}{4}$ admissible $c$-values give a value of $2q$.  Let $c$ denote an admissible value and assume first that $c = (1-c^{-1})^{-1}$.  (We know automatically that $c \neq c^{-1}, 1-c, 1-c^{-1}, (1-c)^{-1}$ because none of those are admissible.)  This implies $c = 2$.  This means that $1-c = (1-c)^{-1}$ and $c^{-1} = 1-c^{-1}$.  We claim that this is the only circumstance in which $c$ is admissible and the six values $c, \,c^{-1},\, 1-c,\, 1 - c^{-1} ,\, (1 - c)^{-1},\, (1-c^{-1})^{-1}$ are not all distinct.  As in the proof of Lemma~\ref{c-orbits}, we have that $c^{-1}$ and $1-c^{-1}$ are quadratic non-residues and $1-c$ and $(1-c)^{-1}$ are quadratic residues.  If we have equality for either of these two pairs, then $c = 2$.

Let $c$ denote an admissible value and consider the six values $c, \,c^{-1},\, 1-c,\, 1 - c^{-1} ,\, (1 - c)^{-1},\, (1-c^{-1})^{-1}$.  By Proposition~\ref{c-orbit invariance}, these six (not necessarily distinct) elements of $\FF_q$ all produce the same character sum.  If $c = (1-c^{-1})^{-1}$, then we have exactly one admissible value and two inadmissible values in this orbit.  Otherwise we have exactly two admissible values and four inadmissible values in this orbit.  If each of the $\frac{(q-1)}{4}$ admissible $c$-values (Lemma~\ref{count admissible values}) produces a character sum equal to the upper bound, then the preceding remarks in fact yield $\frac{3(q-1)}{4}$ values of $c$ (one-third admissible, two-thirds inadmissible) such that $S_c = 2q$.  Combining these assumptions with Lemma~\ref{jacobi lemma} and the Weil bound, we find
\[
\sum_{c \in \FF_q} S_c  \geq \frac{3(q-1)}{4} \cdot 2q + (-1) + (-1) + \left( \frac{q-1}{4} - 2\right) \cdot (-2q) = q(q-1)+4q-2 > \max(q(q-3), (q-1)^2).
\]
The latter contradicts Proposition~\ref{lidl exercise}, and so we deduce that at least some admissible $c$-value misses the upper bound.  
\end{proof}




\section{Further directions}

There are many directions in which the results of this paper could be extended.  We conclude by enumerating several of these.

\begin{enumerate}
\item In many cases, Theorems~\ref{main theorem 1} and \ref{main theorem 2} describe explicit rational generators for $E_d(\FF_{q^2}(t))$.  What can be said about the index of the subgroup of $E_d(\FF_{q^2}(t))$ that our explicit points generate?  Because of the need to invert $d$ in Corollary~\ref{cor fixed to curve}, our techniques are well-suited only to detecting the part of the index relatively prime to $d$.  
\item The explicit points produced by our method are typically not concise; see Example~\ref{example of explicit point}. Understanding the relations among the points given by various lines will hopefully lead to a more explicit rational generating set of lower height and smaller index as found with respect to the Legendre curve in (3.1) of \cite{Ulm14}.  Further possible applications include computing Tate-Shafarevich groups, as in \cite{Ulm14b}.  
\item We have considered only one family of lines on the Fermat surface.  What do we gain by considering other lines (such as those described in \cite[\S 5.3]{SSV10}) or higher degree curves?  
\item We use only the small abelian piece $T$ of the automorphism group of the Fermat surface.  What do we gain by considering all automorphisms?  
\end{enumerate}

For our applications to elliptic curves, we found lines which generated a portion of the N\'eron-Severi group of the Fermat surface.  This was the portion corresponding to tuples of the form $(i,i,i,*)$; see Section~\ref{representation theory section} for our first description of these tuples.   There are many interesting directions for future research if we consider all tuples appearing in the decomposition of the N\'eron-Severi group.  Such questions are very interesting with regards to the Fermat surface and the character sums themselves; they are presumably unnecessary for applications to elliptic curves.    

\begin{enumerate}[resume]
\item Computations suggest that our line from Theorem~\ref{main theorem 1}, together with its $(\mu_d^4/\mu_d)$-translates, often rationally generates the entire N\'eron-Severi group of the Fermat surface.  For example, this is the case for every prime $p \equiv 7 \bmod 12$ from $7$ through $127$.  It is not true for $p = 139$.  We would like to determine when this happens.  Based on computational evidence, we expect that the line and its translates usually generate the entire N\'eron-Severi group.
\item The question of what part of the N\'eron-Severi group of the Fermat surface is generated by a particular line can be phrased entirely as a question of how often multiplicative character sums as in Definition~\ref{charsum defn} hit their upper bound of $2q$; this latter question seems interesting in its own right.  
\end{enumerate}

\bibliography{doug}
\bibliographystyle{plain}

\end{document}